\definecolor{mno}{rgb}{0.5,0.1,0.5}
\newcommand{\Rd}{{\mathds R^d}}
\newcommand{\Pp}{\mathds P}
\newcommand{\Ee}{\mathds E}
\newcommand{\E}{\mathscr E}
\newcommand{\cE}{\mathscr E}
\newcommand{\I}{\mathds 1}
\def\<{\langle}
\def\>{\rangle}
\newtheorem{theorem}{Theorem}[section]
\newtheorem{lemma}[theorem]{Lemma}
\newtheorem{prop}[theorem]{Proposition}
\newtheorem{corollary}[theorem]{Corollary}
\theoremstyle{definition}
\newtheorem{remark}[theorem]{Remark}
\theoremstyle{remark}
\numberwithin{equation}{section}
\numberwithin{theorem}{section}
\def \RR {\mathds R}
\def \NN {\mathds{N}}
\def \lm {\lambda}
\def \tp{\tilde{p}}
\def \RdO {\mathds{R}^d_0}
\begin{document}
\allowdisplaybreaks

\title[Fractional Schr\"{o}dinger operators with negative hardy potential] {\bfseries  Heat kernel estimates of fractional  Schr\"{o}dinger operators with negative hardy potential}
\author{Tomasz Jakubowski\quad and \quad Jian Wang}
\address{\emph{T.\ Jakubowski}: Faculty of Pure and Applied Mathematics, Wroc{\l}aw University of Science and Technology, Wyb. Wyspia\'{n}skiego 27, 50-370 Wroc{\l}aw, Poland.}
\email{tomasz.jakubowski@pwr.edu.pl}
\thanks{Tomasz Jakubowski is partially supported by the NCN grant 2015/18/E/ST1/00239. Jian Wang is partially 
supported by the NNSFC (Nos.\
11522106 and 11831014), the Fok Ying Tung Education Foundation (No.\ 151002),
the Program for Probability and Statistics: Theory and Application
(No. IRTL1704), and the Program for IRTSTFJ}
\address{\emph{J.\ Wang}:
   College of Mathematics and Informatics \& Fujian Key Laboratory of Mathematical Analysis and Applications (FJKLMAA), Fujian Normal University, 350007 Fuzhou, P.R. China.}
    \email{jianwang@fjnu.edu.cn}

\subjclass[2010]{60G51; 60G52; 60J25; 60J75} \keywords{fractional
Laplacian; Hardy potential; heat kernel; the Chapman-Kolmogorov equation; the Feynman-Kac formula;
Duhamel's formula}
\begin{abstract} We obtain two-sided estimates for the heat kernel (or the fundamental
function) associated with the following fractional Schr\"{o}dinger operator with
negative Hardy potential
$$\Delta^{\alpha/2} -\lambda |x|^{-\alpha}$$ on $\RR^d$, where $\alpha\in(0,d\wedge
2)$ and $\lambda>0$. The proof is purely analytical but elementary.
In particular, for upper bounds of heat kernel we use the Chapman-Kolmogorov equation and adopt self-improving argument.
\end{abstract}

\maketitle

\section{Introduction}
Let $d\in \NN_+:=\{1,2,\cdots\}$ and $\alpha\in(0,d\wedge 2)$. We consider the following
Schr\"{o}dinger operator
\begin{equation}\label{e:op1}
\mathcal{L} := \Delta^{\alpha/2} +q
\end{equation}
on $\RR^d$, where $\Delta^{\alpha/2}:=-(-\Delta)^{\alpha/2}$ and
$q(x) = \kappa|x|^{-\alpha}$ with
\begin{equation}\label{e:constant}\kappa=\kappa_\delta:= \frac{2^\alpha\Gamma(\frac{\alpha-\delta}{2}) \Gamma(\frac{d+\delta}{2})}
{\Gamma(\frac{-\delta}{2})\Gamma(\frac{d+\delta-\alpha}{2})}
\end{equation} for any $\delta \in(0,\alpha).$ Here,
$\Gamma(-\delta/2)=\int_0^\infty r^{-1-\delta/2}(e^{-r}-1)\,dr<0$
for $\delta\in (0,\alpha)$, and $\Gamma(z)=\int_0^\infty
r^{z-1}e^{-r}\,dr$ for all $z>0$.  We note that $\kappa_\delta<0$,
and so $q(x) <0$ on $\RR^d$; we also note that the radial function
$|x|^{-\alpha}$ comes from the Hardy inequality for fractional
Laplacian $\Delta^{\alpha/2}$ (see \cite{2016-KB-BD-PK-pa, FLS} and
the references therein for more details). Thus, the operator
$\mathcal{L}$ given by \eqref{e:op1} is the fractional
Schr\"{o}dinger operator with negative Hardy potential. Denote by
$\tp(t,x,y)$ the heat kernel associated with the operator
$\mathcal{L}=\Delta^{\alpha/2} +q$; see Subsection \ref{e:sub2-1}
below for more details. Our main result is as follows.
\begin{theorem}\label{th:main}For any $\delta\in (0,\alpha)$, the Schr\"{o}dinger operator $\mathcal{L}$ given by \eqref{e:op1} has the heat kernel $\tp(t,x,y)$, which is jointly continuous
on $(0,\infty)\times \RR^d\times \RR^d$, and satisfies two-sided
estimates as follows
\begin{align}\label{EQ:main}
    \tp(t,x,y) \approx \left(1\wedge \frac{|x|}{t^{1/\alpha}}\right)^\delta \left(1\wedge \frac{|y|}{t^{1/\alpha}}\right)^\delta \left(t^{-d/\alpha}\wedge\frac{t}{|x-y|^{d+\alpha}}\right),\quad x,y\in \RR^d,\;  t>0.
\end{align}
\end{theorem}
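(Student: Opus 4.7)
The plan is to represent the heat kernel $\tp(t,x,y)$ through the Duhamel identity
\begin{equation*}
\tp(t,x,y)=p(t,x,y)+\int_0^t\!\!\int_{\RR^d}p(t-s,x,z)\,q(z)\,\tp(s,z,y)\,\d z\,\d s,
\end{equation*}
where $p$ is the heat kernel of $\Delta^{\alpha/2}$, and to exploit two structural inputs. First, since $q=\kappa_\delta|\cdot|^{-\alpha}\le 0$, the Feynman-Kac exponential is bounded by $1$, so $\tp(t,x,y)\le p(t,x,y)$; combined with the well-known two-sided estimate $p(t,x,y)\approx t^{-d/\alpha}\wedge t/|x-y|^{d+\alpha}$, this already furnishes the ``outer'' factor in \eqref{EQ:main}. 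Second, the precise value of $\kappa_\delta$ in \eqref{e:constant} is chosen exactly so that $h(x):=|x|^\delta$ satisfies $\mathcal{L}h=0$ on $\RR^d\setminus\{0\}$, i.e.\ $\Delta^{\alpha/2}|x|^\delta=-\kappa_\delta|x|^{\delta-\alpha}$. This Hardy-type harmonic identity is the analytic engine driving both bounds.

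For the upper bound I would implement the self-improving Chapman-Kolmogorov scheme flagged in the abstract. Starting from a provisional estimate of the form $\tp(t,x,y)\le C(1\wedge|x|/t^{1/\alpha})^{\delta_0}(1\wedge|y|/t^{1/\alpha})^{\delta_0}p(t,x,y)$ with some $\delta_0\in[0,\delta)$ (the base case $\delta_0=0$ being free), I plug this into
\begin{equation*}
\tp(t,x,y)=\int_{\RR^d}\tp(t/2,x,z)\,\tp(t/2,z,y)\,\d z
\end{equation*}
and evaluate the emerging space integrals using the harmonic identity for the auxiliary exponent $\delta_0$. The Hardy structure forces the resulting bound to hold with a strictly larger exponent $\delta_1>\delta_0$, and iterating raises $\delta_n$ above any prescribed $\delta''<\delta$; a final continuity step $\delta''\uparrow\delta$, based on the finiteness of $\Gamma(\tfrac{\alpha-\delta''}{2})$ throughout $(0,\alpha)$, delivers the sharp exponent. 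The matching lower bound I would extract from the Duhamel series itself: iterating once gives $\tp=p-p\ast|q|\ast p+p\ast|q|\ast p\ast|q|\ast\tp$ (with $\ast$ denoting space-time convolution), and the harmonic identity yields sharp pointwise control of each convolution block $p\ast|q|\ast p$, so that the leading terms compared against $h(x)h(y)$ reproduce the right-hand side of \eqref{EQ:main} while the tail is absorbed as a lower-order correction.

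The principal obstacle I anticipate is the upper-bound self-improvement step: each iteration must return a bound of exactly the same structural form as the previous one so that Chapman-Kolmogorov can be reapplied, and the constants must remain uniform as the exponent is pushed up to the critical Hardy value $\delta$. The borderline integrability of $|z|^{-\alpha}$ against the improving estimates is where the analytic pressure is concentrated, and controlling it will rely on the explicit gamma-function structure of $\kappa_{\delta'}$ in \eqref{e:constant} to keep all intermediate integrals convergent along the iteration. Once the two-sided bound \eqref{EQ:main} is established, joint continuity of $\tp$ on $(0,\infty)\times\RR^d\times\RR^d$ follows from continuity of $p$ and dominated convergence applied term-by-term to the Duhamel series, which the upper bound renders uniformly integrable.
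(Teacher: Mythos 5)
Your outline correctly identifies the key structural inputs (the Duhamel identity, the Hardy-harmonic function $h(x)=|x|^\delta$, and Chapman--Kolmogorov self-improvement), but both the upper- and lower-bound mechanisms you propose differ from the paper's in ways that create genuine gaps.

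For the upper bound you propose to iterate the \emph{exponent} $\delta_0\to\delta_1\to\cdots\to\delta$, and you correctly flag that controlling the constants as $\delta_n\uparrow\delta$ is the danger. The paper sidesteps this entirely: the exponent $\delta$ is fixed from the very first step, because Theorem \ref{thm:0} supplies the \emph{exact} identity $\int\tp(1,x,y)|y|^\delta\,dy=|x|^\delta$ (no inequality, no lower exponent). What is iterated is the \emph{error term}: Proposition \ref{lem:mass_est} derives $H(1,x)\le\eta\,H(1,3^{1/\alpha}x)+M|x|^\delta$ and iterates the $\eta H$ part to zero, and Theorem \ref{thm:UB} similarly iterates $\tp(t,x,y)\le[\eta^{n+1}+(1+\nu+\cdots+\nu^n)Mh(t,x)]p(t,x,y)$. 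Both are geometric iterations at a fixed $\delta$, so there is never a boundary-exponent limit to take and the constant issue you worry about never arises. It is not clear how your exponent-iterating scheme would even get off the ground, because the identity \eqref{eq:0} for $\beta\ne\delta$ carries an extra source term $(\kappa-\kappa_\beta)\int_0^t\int\tp(s,x,y)|y|^{\beta-\alpha}\,dy\,ds$ of indeterminate sign, and there is no obvious mechanism forcing $\delta_{n+1}>\delta_n$.

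The lower bound is the more serious gap. Iterating the Duhamel series cannot work here: the paper explicitly remarks that the perturbation series fails to converge for large $-\kappa$ (the potential is not in the Kato class), and more concretely, the first iterate $p_1(1,x,y)=\int_0^1\int p(1-s,x,z)|z|^{-\alpha}p(s,z,y)\,dz\,ds$ satisfies (Lemma \ref{lem:p1est}) $p_1(1,x,y)\approx[G(1,x)+G(1,y)]\,p(1,x,y)$ with $G(1,x)\approx\log(1+|x|^{-\alpha})\to\infty$ as $|x|\to0$. So the truncated expansion $p-|\kappa|p_1$ goes \emph{negative} near the origin, its singularity is logarithmic rather than algebraic, and no finite number of convolution blocks $p\ast|q|\ast p$ will produce the decay $|x|^\delta$ --- the terms do not ``compare against $h(x)h(y)$'' as you claim. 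The paper's actual route is quite different: it proves (Appendix, via complete monotonicity of $\lambda\mapsto p^\lambda$ and the Bernstein representation) the exponential lower bound $\tp(t,x,y)\ge p(t,x,y)\exp[\kappa\,p_1(t,x,y)/p(t,x,y)]$, which yields $\tp\ge c(1\wedge t^{-1/\alpha}|x|)^\gamma(1\wedge t^{-1/\alpha}|y|)^\gamma p$ with a \emph{wrong, generally too-large} exponent $\gamma$; this is then upgraded to the sharp exponent $\delta$ by combining the mass lower bound $H(1,x)\ge c(1\wedge|x|^\delta)$ (which itself requires the upper bound already established) with Chapman--Kolmogorov through intermediate annuli (Lemmas \ref{lem:lowerCase1}--\ref{lem:lowerCase2}). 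Finally, your proposed continuity argument by term-by-term dominated convergence of the Duhamel series inherits the same convergence problem; the paper instead works with the Duhamel \emph{formula} (not series) and isolates the small-time and near-$1$ time regimes using the estimate $G(\varepsilon,y)\le c\varepsilon|y|^{-\alpha}$.
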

We note that the last expression in \eqref{EQ:main} may be  replaced
by the heat kernel $p(t,x,y)$ of $\Delta^{\alpha/2}$ (see Subsection
2.1 and \eqref{eq:pest}). As pointed out before Lemma \ref{L:2.3}
below, the function $\delta\mapsto \kappa_\delta$ is strictly
decreasing on $(0,\alpha)$ with $\lim_{\delta\to0}\kappa_\delta=0$
and $\lim_{\delta\to \alpha}\kappa_\delta=-\infty.$ Hence, Theorem
\ref{th:main} essentially gives us two-sided estimates and the joint
continuity of heat kernel associated with the operator
$\Delta^{\alpha/2} -\lambda |x|^{-\alpha}$ for all $\lambda>0$. It
is well known that the fractional Laplacian $\Delta^{\alpha/2}$ is
the infinitesimal generator of the rotationally symmetric
$\alpha$-stable process, which now has been attracted a lot of
interests in the field of probability and potential theory (see
\cite{BBK} and references therein). Recently there are also a few
works concerning on gradient perturbations and Schr\"{o}dinger
perturbations of fractional Laplacian (see e.g.
\cite{2008-KB-WH-TJ-sm, BJ, BJ2012, 2013-KB-TJ-SS-jee, CKS3, CKS2,
2009-TJ-pa, KK,  2013-TK-pa, Song1, Take}).  In particular,
according to \cite[Theorem 3.4]{Song1}, when the potential belongs
to the so-called Kato class, heat kernel estimates for
Schr\"{o}dinger perturbations of fractional Laplacian are comparable
with these for fractional Laplacian (at least for any fixed finite
time). Note that $q(x)=-\lambda|x|^{-\alpha}$ does not belong to the
Kato class. As shown in Theorem \ref{th:main}, the heat kernel
$\tp(t,x,y)$ associated with the Schr\"{o}dinger operator
$\mathcal{L}$ given by \eqref{e:op1} exhibits behaviour which is
different from that of the case that $q(x)=-\lambda|x|^{-\gamma}$
with $\gamma\in (0,\alpha),$ which is in the Kato class. The study
of heat kernel estimates for Schr\"odinger-type perturbations by the
Hardy potential of fractional Laplacian is much more delicate.

In the classical case $\alpha=2$, the Schr\"odinger-type
perturbations by the Hardy potential were considered for the first
time by Baras and Goldstein \cite{MR742415}. They proved the
existence of nontrivial nonnegative solutions of the classical heat
equation $\partial_t=\Delta+\kappa|x|^{-2}$ in $\Rd$ for $0\le
\kappa\le (d-2)^2/4$, and nonexistence of such solutions, that is
explosion, for bigger constants $\kappa$. Sharp upper and lower
bounds for the heat kernel of the Schr\"odinger operator
$\Delta+\kappa|x|^{-2}$ were obtained by Liskevich and Sobol
\cite[p.~365 and Examples 3.8, 4.5 and 4.10]{2003-VL-ZS-pa}  for
$0<\kappa< (d-2)^2/4$. Milman and Semenov proved the upper and lower
bounds for $\kappa\le (d-2)^2/4$, see \cite[Theorem
1]{2004-PM-YS-jfa} and \cite{MR2180080}. In this paper, they also
allowed $\kappa <0$ and obtained the sharp upper and lower bounds
for the perturbed kernel (see \cite[Theorem 2 and Corollary
4]{2004-PM-YS-jfa}). See \cite{IKO} and the references therein for
the recent works of this topic.

For $\alpha \in (0, d \land 2)$ the Schr\"odinger operator
$\mathcal{L}$ with $\kappa \ge 0$ attains recently more  and more
interest. In \cite{AMPP1,AMPP2} for $ \kappa > \kappa^* :=
\frac{2^{\alpha} \Gamma((d+\alpha)/4)^2 }{\Gamma((d-\alpha)/4)^{2}}$
the phenomenon of instantaneous blow up of heat kernel was proven.
In \cite{BA}, the author gives the upper bound for the heat kernel
of $\mathcal{L}$ with the Dirichlet conditions on bounded open
subsets of $\RR^d$. In the recent paper \cite{2017-KB-TG-TJ-DP}, the
following sharp estimates for the heat kernel $\tilde{p}(t,x,y)$ of
$\mathcal{L}$ were obtained. For $0 \le \kappa \le \kappa^*$, there
is a unique constant $\delta \in [0,(d-\alpha)/2]$ such that for all
$t>0$ and $x,y \in \RR^d_0:=\RR^d\backslash\{0\}$,
\begin{equation}
\label{EQ:positEst} \tilde{p}(t,x,y)\approx
\left(1+t^{\delta/\alpha}|x|^{-\delta}
\right)\left(1+t^{\delta/\alpha}|y|^{-\delta}  \right) p(t,x,y).
\end{equation}
Note that since the singularity  of the function $\Rd\ni
x\mapsto\kappa |x|^{-\alpha}$ at the origin is critical,
$\tilde{p}(t,x,y)$ is not comparable with the unperturbed kernel
$p(t,x,y)$. Like in Theorem \ref{th:main}, the choice of $\kappa$
influences the growth rate or the decay rate of the heat kernel at
the origin. This rate is represented by the function
$|x|^{-\delta}$, where $\delta$ is connected with $\kappa$ via the
formula $\kappa=\frac{2^\alpha\Gamma(\frac{\alpha+\delta}{2})
\Gamma(\frac{d-\delta}{2})}{\Gamma(\frac{\delta}{2})\Gamma(\frac{d-\delta-\alpha}{2})}$
(compared with \eqref{e:constant}).
\begin{remark}Theorem \ref{th:main} and  \cite[Theorem 1.1]{2017-KB-TG-TJ-DP} (the main result of \cite{2017-KB-TG-TJ-DP}) can be stated together as follows.
\emph{ For any $-\infty < \kappa \le \kappa^*=\frac{2^{\alpha}
\Gamma((d+\alpha)/4)^2 }{\Gamma((d-\alpha)/4)^{2}}$, the heat kernel
$\tp(t,x,y)$ corresponding to the Schr\"odinger operator
$\Delta^{\alpha/2}+\kappa |x|^{-\alpha}$ satisfies
\begin{align*}
\tilde{p}(t,x,y)\approx \left(1+\frac{t^{1/\alpha}}{|x|^{\alpha}}  \right)^\delta \left(1+\frac{t^{1/\alpha}}{|y|^{\alpha}}  \right)^\delta p(t,x,y), \qquad t>0,\; x,y \in \RdO,
\end{align*}
where $\delta \in (-\alpha,\frac{d-\alpha}{2}]$ is uniquely determined by
\begin{align*}
\kappa= \frac{2^\alpha\Gamma(\frac{\alpha+\delta}{2}) \Gamma(\frac{d-\delta}{2})}
{\Gamma(\frac{\delta}{2})\Gamma(\frac{d-\delta-\alpha}{2})}.
\end{align*}}
\end{remark}
 In this setting, Theorem \ref{th:main} may be treated as both a fractional counterpart of the result
  obtained in \cite{2004-PM-YS-jfa} and the extension of \eqref{EQ:positEst} to negative values of $\kappa$.
  Here, we would like to stretch out one difference between the cases $\alpha=2$ and $\alpha<2$ for $\kappa<0$.
  The general form of the estimate in both cases is similar, i.e., the perturbed kernel $\tp(t,x,y)$ is comparable
  with the unperturbed kernel $p(t,x,y)$ multiplied by some weighted functions. However, in \cite[Theorem 2 and Corollary 4]{2004-PM-YS-jfa}, for $\alpha=2$,
  the exponent of the weighted function is equal to $\delta=  \frac{ \sqrt{(d-2)^2 - 4\kappa} - (d-2)}{2} $ and converges to infinity as $\kappa \to -\infty$. In our case $\alpha<2$, as it was mentioned below the statement of Theorem \ref{th:main}, $\delta \to \alpha$ for $\kappa \to -\infty$.
Since $q(x)=\kappa |x|^{-\alpha}$ is negative and does not belong to
any Kato class on $\RR^d$, the construction and proofs of the
estimates of $\tp(t,x,y)$ are very delicate. In particular, we
cannot use the perturbation series (at least for large values of
$-\kappa$) to construct $\tp(t,x,y)$ as used in
\cite{2016-KB-BD-PK-pa,2017-KB-TG-TJ-DP,2008-KB-WH-TJ-sm}. That is
why we will consider the Dirichlet  fractional Laplacian  operator
$\Delta^{\alpha/2}$ on $\RdO = \RR^d \setminus \{0\}$ and via the
Feyman-Kac formula, we construct $\tp(t,x,y)$ on $(0,\infty)\times
\RdO \times \RdO$. Hence, the operator $\mathcal{L}$ with negative
values of $\kappa$ also enjoys some probabilistic meaning. Roughly
speaking, it is connected with a symmetric $\alpha$-stable process
with the killing rate  $e^{-\kappa|x|^{-\alpha}}$, which strongly
affects the behaviour of $\tp(t,x,y)$ for $x$  and $y$ near $0$. It
turns out that due to the strong singularity of $q(x)$ at $0$, the
heat kernel (or the transition density function) $\tp(t,x,y)$ is
equal to $0$ when $x=0$ or $y=0$. In consequence, the kernel
$\tp(t,x,y)$ defined on $(0,\infty) \times \RdO \times \RdO$ may be
continuously extended to $(0,\infty) \times \RR^d \times \RR^d$.

We note that Theorem \ref{th:main} was proved independently in a very recent paper \cite{CKSV}.
 In  the proofs, the authors use generally probabilistic tools. In our paper we propose a different method. Although
 the perturbed kernel $\tp(t,x,y)$ is defined by the Feyman-Kac formula,
 in the proofs we apply only analytical tools. For upper bounds, we generally use the Chapman-Kolmogorov equation and the method of \lq\lq \emph{self-improving estimates}'' (see the proofs of Proposition \ref{lem:mass_est} and Theorem \ref{thm:UB}, see also the proof of \cite[Theorem 1.1]{2016-TJ-GS-natmaa}). Roughly speaking, to show the inequality $f(x) \le C F(x)$, we first show that $f(x) \le g_1(x) + c_1 F(x)$, where $g_1(x)$ is in some sense small. Next, by plugging this estimate to the proper functional inequality on $f$, we get the improved estimate of the form $f(x) \le g_n(x) + c_n F(x)$, where $g_n(x) \to 0$ as $n \to \infty$ and $\sup_{n \in \NN_+}c_n <\infty$. By passing with $n$ to infinity we obtain the desired estimate. To obtain lower bounds we use the generally well known estimate from  Lemma \ref{lem:tpexp} and upper bound estimates. Although the estimate from Lemma \ref{lem:tpexp} is generally well known,  we couldn't find the proper reference with the assumptions on the potential satisfied by $q(x)$. We note that the setting of \cite{CKSV} is more general than the present paper. From the other side, we give more details about the kernel $\tilde{p}(t,x,y)$, see e.g. Theorem \ref{thm:0}. We also note that in our paper we show the straightforward dependence between the exponent $\delta$ and the potential $q(x)$, while in \cite[Theorem 3.9]{CKSV} this dependence, given by double integral, is much more complicated.

The paper is organized as follows. In Section 2, we construct
$\tp(t,x,y)$ and prove some basic properties of this kernel. In
Section 3,  we give the proof of Theorem \ref{th:main}. First, we
prove upper bounds in Theorem \ref{thm:UB}. Next, we show lower
bounds in Theorem \ref{Thm:LB} and joint continuity (Theorem
\ref{thm:cont}). We end this section with short discussion on
Dirichlet forms associated with the Schr\"{o}dinger operator
$\mathcal{L}$ given by \eqref{e:op1}. Finally, in the Appendix, we
present the proof Lemma \ref{lem:tpexp}.

Throughout the paper, we write $f\approx g$ for $f,g\ge0$, if there
is a constant $c\ge1$ such that $c^{-1}f\le g\le c f$ on their
common domain. The constants $c, C, c_i$, whose exact values are
unimportant, are changed in each statement and proof. Let $B(x,r)$
be the open ball with center $x\in \RR^d$ and radius $r>0$. As usual
we write $a  \land b := \min(a,b)$ and $a \vee b := \max(a,b)$.

\section{Preliminary estimates}
\subsection{Fractional Laplacian and rotationally symmetric $\alpha$-stable L\'evy process}
Let
\begin{equation}\label{e:levymea}
\nu(z)
=\frac{ \alpha 2^{\alpha-1}\Gamma\big((d+\alpha)/2\big)}{\pi^{d/2}\Gamma(1-\alpha/2)}|z|^{-d-\alpha}\,,\quad z\in \Rd.
\end{equation}
For (smooth and compactly supported) test function $\varphi\in C^\infty_c(\Rd)$, we define the fractional Laplacian  by
\begin{equation*}
  \Delta^{\alpha/2}\varphi(x)=-(-\Delta)^{\alpha/2}\varphi(x):=
  \lim_{\varepsilon \downarrow 0}\int_{\{|z|>\varepsilon\}}
  \left[\varphi(x+z)-\varphi(x)\right]\nu(z)\,dz\,,
  \quad
  x\in \Rd\,.
\end{equation*}
In terms of the Fourier transform (see \cite[Section~1.1.2]{FLS}),
$\widehat{\Delta^{\alpha/2}\varphi}(\xi)=-|\xi|^{\alpha}\hat{\varphi}(\xi)$.
Denote by $p(t,x,y)$ the heat kernel (or the fundamental function) of  $\Delta^{\alpha/2}$ (or equivalently, the transition density
function of a (rotationally) symmetric  $\alpha$-stable L\'evy
process $(X_t)_{t\ge0}$). It is well known that $p(t,x,y)$ is
symmetric in the sense that $p(t,x,y)=p(t,y,x)$ for any $t>0$ and
$x,y\in \RR^d$, and enjoys the following scaling property
$$p(t,x,y)=t^{-d/\alpha}p(1,t^{-1/\alpha} x, t^{-1/\alpha}y),\quad t>0,\; x,y\in \RR^d.$$
Moreover,
\begin{align}\label{eq:pest}
    p(t,x,y) \approx t^{-d/\alpha} \land \frac{t}{|x-y|^{d+\alpha}},\quad t>0,\;x,y\in \RR^d.
\end{align}
We also note that $p(t,x,y)$ is a function of $t$ and $x-y$, so
sometimes we also write it as $p(t,x-y)$, i.e. $p(t,x,y) =
p(t,x-y)$.
See \cite{BBK} for more details.

\subsection{Fractional Laplacian Schr\"{o}dinger operator and Feynman-Kac formula}\label{e:sub2-1}
In this part, we apply some results from \cite[Chapter 2]{DC} to the
operator $\mathcal{L}=\Delta^{\alpha/2} +q$ given by \eqref{e:op1}, where
$q(x)=\kappa_\delta |x|^{-\alpha}<0$.  Let
$\RR_0^d:=\RR^d\backslash\{0\}$. We first recall \cite[Chapter
2, Definition 2.1]{DC}.  A nonnegative Borel measurable function $V$
on $\RR^d_0$ is said to belong to \emph{the Kato class} $\mathcal
K_\alpha$, if
$$\lim_{t\to0}\sup_{x\in \RR^d_0} \int_0^t\int_{\RR^d_0} p(s,x,y)V(y)\,dy\,ds=0.$$
A nonnegative Borel measurable function $V$ on $\RR^d_0$ is said to
belong to \emph{the local Kato class} $\mathcal K_{\alpha, {\rm
loc}}$, if $V\I_D\in \mathcal K_{\alpha}$ for all compact subsets $D$ of
$\RR^d_0$. A Borel measurable function $V$ on $\RR^d_0$ is said to
belong to \emph{the Kato-Feller class}, if its positive part
$V_+:=\max\{V,0\}\in \mathcal K_\alpha$ and its negative part
$V_-:=\max\{-V,0\}\in \mathcal K_{\alpha, {\rm loc}}.$ (Different
from \cite{DC}, in the present setting we start from the nonpositive
definite operator $\Delta^{\alpha/2} +q$, and so we make the
corresponding changes in the definition of the Kato-Feller class.)
It is easily seen from \cite[Lemma 2.3]{2017-KB-TG-TJ-DP} that
$-q\notin \mathcal K_\alpha$, but always we have  $-q\in \mathcal
K_{\alpha, {\rm loc}}$. In particular, $q$ belongs to the
Kato-Feller class.

In the following, we will restrict ourselves on the killed
subprocess of the symmetric $\alpha$-stable L\'evy process
$(X_t)_{t\ge0}$ upon exiting $\RR_0^d$ (or hitting the origin), i.e.,
\begin{equation*}
X^{\RR_0^d}_t:=\begin{cases}
X_t,\quad\text{if}\ t<\tau_{\RR_0^d},\\
\,0,\,\,\quad \text{if}\ t\ge \tau_{\RR_0^d},
\end{cases}
\end{equation*}
where $\tau_{\RR_0^d}:=\inf \{t \ge 0: X_t \notin \RR_0^d\}=\inf\{t\ge0:X_t=0\}$. By the strong Markov property of the process $(X_t)_{t\ge0}$, it is easy to see that the process $(X_t^{\RR_0^d})_{t\ge0}$ has a transition density (or Dirichlet heat kernel) $p^{\RR_0^d}(t,x,y)$, which enjoys the following relation with $p(t,x,y)$:
\begin{equation*}
\begin{split} p^{\RR_0^d}(t,x,y)&=p(t,x,y)-\Ee^x\big[p(t-\tau_{\RR_0^d},X_{\tau_{\RR_0^d}},y)\I_{\{t\ge \tau_{\RR_0^d}\}}\big],\quad x, y \in {\RR_0^d};\\
p^{\RR_0^d}(t,x,y)&=0,\quad x=0 \text{ or } y =0.
\end{split}
\end{equation*} Since the process $(X_t)_{t\ge0}$ is transient due to $\alpha<d$, $\Pp^x(\tau_{\RR_0^d} < \infty)=0$ for all $x\in \RR_0^d$, and consequently
\begin{equation}\label{e:hk1}p^{\RR_0^d}(t,x,y)=p(t,x,y),\quad t>0,\;x,y\in \RR_0^d.\end{equation}

It is well known that for every $t>0$ the function $p(t,\cdot,\cdot)$ is
continuous on ${\RR^d}\times {\RR^d}$, and $p(t,x,y)$ satisfies the following Chapman-Kolmogorov equation
\begin{align}\label{eq:CKp}
	p(t+s,x,y)=\int_{\RR^d} p(t,x,z)p(s,z,y)\,dz,\quad t,s>0,\; x,y\in \RR^d.
\end{align}

Regard $\mathcal{L}=\Delta^{\alpha/2} +q$ as the operator defined on
$C_\infty(\RR_0^d)$; that is, we consider a negative perturbation of
the fractional Laplacian on $\RR^d_0$ (with the Dirichlet boundary condition at $\{0\}$). Therefore, according to
\cite[Theorem 2.5]{DC}, the operator $\mathcal{L}=\Delta^{\alpha/2} +q$ can
generate a strongly continuous and positivity preserving semigroup
$(\tilde P_t)_{t\ge0}$ on $C_\infty(\RR_0^d)$, which is given by
$$
\tilde P_t f(x)=\int \tp(t,x,y)f(y)\,dy,\quad f\in C_\infty (\RR^d_0),\; x\in \RR_0^d,
$$
where the kernel $\tp(t,x,y)$ satisfies the Chapman-Kolmogorov equation too, i.e.,
\begin{align}\label{eq:CKpt}
	\tp(t+s,x,y)=\int \tp(t,x,z)\tp(s,z,y)\,dz,\quad t,s>0,\; x,y\in \RR_0^d.
\end{align}
Additionally, for $t>0$, we put $\tp(t,x,y) =0$, whenever $x=0$ or
$y=0$. Moreover, $(\tilde P_t)_{t\ge0}$ also acts as  a strongly
continuous semigroup in $L^p(\RR_0^d;dx)$ for all $1\le p<\infty$,
and, in $L^2(\RR_0^d;dx)$ the semigroup $(\tilde P_t)_{t\ge0}$ is
self-adjoint. Meanwhile, $(\tilde P_t)_{t\ge0}$ is given via the
Feynman-Kac formula:
\begin{equation}\label{e:hk2}
\begin{split}
\tilde P_t f(x)=&\Ee^x \left( f\big(X^{\RR^d_0}_t\big)e^{\int_0^t q\big(X^{\RR^d_0}_s\big)\,ds}\right)\\
=&\Ee^x \left( f(X_t)e^{\int_0^t
q(X_s)\,ds}\I_{\{t<\tau_{\RR_0^d}\}}\right),\quad f\in
C_\infty(\RR_0^d),\; t>0,\; x\in \RR^d_0.\end{split}
\end{equation}
As mentioned above, $\Pp^x(\tau_{\RR_0^d} < \infty)=0$ for all
$x\in \RR_0^d$, thus we have
\begin{equation}\label{e:hk3}\tilde P_t f(x)=\Ee^x \left( f(X_t)e^{\int_0^t q(X_s)\,ds}\right),\quad f\in C_\infty(\RR_0^d),\; t>0,\;x\in \RR_0^d.\end{equation}

Due to $q(x)<0$ on $\RR^d_0$ again, it follows from \eqref{e:hk1} and
\eqref{e:hk2} that
$$\tp(t,x,y) \le p(t,x,y),\quad t>0,\;  x,y\in \RR^d.
$$
Since $(\tilde P_t)_{t\ge0}$ is self-adjoint in $L^2(\RR_0^d;dx)$,
$$\tp(t,x,y)=\tp(t,y,x),\quad t>0,\; x,y\in \RR^d.$$ According
to \cite[Propositions 5.2 and 5.3]{DC} and their proofs,
$\tp(t,x,y)$ will satisfy the following Duhamel's formula:
\begin{equation}\label{e:for}
\begin{split}\tp(t,x,y) &= p(t,x,y) + \int_0^t \int_{\RR^d} p(t-s,x,z)q(z)\tp(s,z,y)\, dz\,ds\\
&=p(t,x,y)+\int_0^t \int_{\RR^d} \tp(t-s,x,z)q(z)p(s,z,y)\, dz\,ds
\end{split}
\end{equation} for all $t>0$ and $x,y\in \RR_0^d$.

Next, we show that $\tp(t,x,y)$ enjoys the same scaling property as $p(t,x,y)$.

\begin{lemma}\label{L:sca} For any $t>0$ and $x,y\in \RR^d$,
$$\tp (t,x,y)=t^{-d/\alpha} \tp(1,xt^{-1/\alpha},yt^{-1/\alpha}).$$ \end{lemma}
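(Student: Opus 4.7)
The plan is to derive the scaling identity from the Feynman-Kac representation \eqref{e:hk3}, combining the self-similarity of the isotropic $\alpha$-stable process with the $(-\alpha)$-homogeneity of the potential, $q(cz) = c^{-\alpha} q(z)$ for $c > 0$. Fix $t > 0$, $x \in \RdO$, and a test function $f \in C_\infty(\RdO)$. First I would introduce the rescaled process $Y_s := t^{-1/\alpha} X_{ts}$; by the standard scaling of $(X_t)_{t \ge 0}$, under $\Pp^x$ it has the same law as the $\alpha$-stable process under $\Pp^{t^{-1/\alpha} x}$.

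The heart of the argument is that the Feynman-Kac exponential is invariant under this rescaling. Writing $X_s = t^{1/\alpha} Y_{s/t}$ and using $q(X_s) = t^{-1} q(Y_{s/t})$, the substitution $u = s/t$ gives
\begin{equation*}
\int_0^t q(X_s)\,ds = \int_0^1 q(Y_u)\,du.
\end{equation*}
Together with $X_t = t^{1/\alpha} Y_1$, this will turn \eqref{e:hk3} into
\begin{equation*}
\tilde P_t f(x) = \Ee^{t^{-1/\alpha}x}\!\left[f(t^{1/\alpha} X_1)\, e^{\int_0^1 q(X_s)\,ds}\right] = \int \tp\bigl(1,\,t^{-1/\alpha}x,\,z\bigr)\, f(t^{1/\alpha}z)\,dz,
\end{equation*}
and the change of variable $y = t^{1/\alpha} z$ rewrites the right-hand side as $\int t^{-d/\alpha}\tp(1,t^{-1/\alpha}x,t^{-1/\alpha}y)\, f(y)\,dy$. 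Comparing this with the defining identity $\tilde P_t f(x) = \int \tp(t,x,y) f(y)\,dy$ and letting $f$ vary in $C_\infty(\RdO)$ will yield the desired equality for almost every $y \in \RdO$.

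It then remains to promote the a.e.\ identity to pointwise equality. The cases $x = 0$ or $y = 0$ are immediate from the convention $\tp(t,0,\cdot) = \tp(t,\cdot,0) = 0$. For $x, y \in \RdO$, I would either select the Feynman-Kac version of $\tp$, or appeal to the joint continuity of $\tp$ on $\RdO \times \RdO$ (established elsewhere in the paper); as a third option, one can iterate the Chapman-Kolmogorov equation \eqref{eq:CKpt} at $t = t/2 + t/2$ and use the a.e.\ identity at time $t/2$ under the spatial integral to force pointwise equality. I do not foresee a serious obstacle; the only subtlety is coordinating the $\alpha$-scaling of the process, the $(-\alpha)$-homogeneity of $q$, and the $t^{-d/\alpha}$ Jacobian so that each factor of $t$ cancels correctly.
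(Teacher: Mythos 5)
Your argument matches the paper's proof in all essentials: both derive the scaling from the Feynman-Kac representation \eqref{e:hk3}, the self-similarity of the $\alpha$-stable process, and the $(-\alpha)$-homogeneity of $q$, and the algebra of the $t$-factors is identical. One caution about your cleanup step: of the three remedies you propose for promoting the a.e.\ identity in $y$ to a pointwise one, appealing to the joint continuity of $\tp$ would be circular, since both Proposition \ref{pro-con} and Theorem \ref{thm:cont} themselves invoke the scaling property (this very lemma) in their proofs. Your first option (fixing the Feynman-Kac version of $\tp$) and your third (iterating Chapman-Kolmogorov at $t/2+t/2$) are both sound; the paper simply elides this step.
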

\begin{proof}
We only consider the case that $x,y\in \RR_0^d$; otherwise, the
statement holds trivially. Recall that for the symmetric
$\alpha$-stable process $(X_t)_{t\ge0}$, the processes
$(X_{ut})_{t\ge0}$ and $(t^{1/\alpha}X_u)_{t\ge0}$ enjoy the same
law for any fixed $u>0$. For fixed $t>0$, set $\hat
X_u=X_{ut}$  for $u\ge0$. Then, by \eqref{e:hk3}, for any $f\in
C_\infty(\RR^d_0)$, $t>0$ and $x\in \RR_0^d$,
\begin{align*}\tilde P_t f(x)=&\Ee^x \left( f(X_t)e^{\int_0^t q(X_s)\,ds}\right)=\Ee^x \left( f(\hat X_1)e^{\int_0^t q(\hat X_{s/t})\,ds}\right)=
\Ee^x \left( f(\hat X_1)e^{t\int_0^1 q(\hat X_{u})\,du}\right)\\
=&\Ee^{t^{-1/\alpha}x} \left( f(t^{1/\alpha} X_1)e^{t\int_0^1 q(
t^{1/\alpha}X_{u})\,du}\right)=\Ee^{t^{-1/\alpha}x} \left(
f(t^{1/\alpha} X_1)e^{\int_0^1 q(X_{u})\,du}\right),\end{align*}
where in the last equality we used the fact that $q(x)=\kappa_\delta
|x|^{-\alpha}.$ Hence, the desired assertion follows from the
equality above.
\end{proof}

\subsection{Integral analysis for fractional Laplacian Schr\"{o}dinger operator}
\begin{lemma} Let $\beta \in (0,2)$. Then,
\begin{align}\label{e:pos}
|x|^\beta = 2^{d+\beta} \pi^{d/2} \frac{\Gamma\left(\frac{d+\beta}{2}\right)}{\Gamma\left(\frac{d+\beta}{\alpha}\right) \left|\Gamma\left(\frac{-\beta}{2}\right)\right|} \int_0^\infty [p(t,0) - p(t,x)]t^{\frac{d-\alpha+\beta}{\alpha}} dt\,
\end{align} where $\Gamma(-\beta/2)=\int_0^\infty t^{-1-\beta/2}(e^{-t}-1)\,dt$.
\end{lemma}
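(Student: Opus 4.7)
The plan is to prove \eqref{e:pos} by Fourier inversion, using that $\hat p(t,\xi)=e^{-t|\xi|^\alpha}$. Since $p(t,\cdot)$ is real and radial, Fourier inversion together with symmetry yields
\begin{equation*}
p(t,0)-p(t,x)=\frac{1}{(2\pi)^d}\int_{\RR^d}\bigl(1-\cos(\xi\cdot x)\bigr)\,e^{-t|\xi|^\alpha}\,d\xi.
\end{equation*}
The integrand is non-negative, so I can multiply by $t^{(d-\alpha+\beta)/\alpha}$ and apply Tonelli without worrying about signs. The inner $t$-integral is a standard Gamma computation: with $\gamma=(d-\alpha+\beta)/\alpha$, one has $\int_0^\infty t^\gamma e^{-t|\xi|^\alpha}dt=\Gamma((d+\beta)/\alpha)\,|\xi|^{-(d+\beta)}$, since $\alpha(\gamma+1)=d+\beta$.

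Next I would reduce the computation to evaluating $I(x):=\int_{\RR^d}(1-\cos(\xi\cdot x))|\xi|^{-d-\beta}\,d\xi$. This is finite for $\beta\in(0,2)$: near the origin $1-\cos(\xi\cdot x)\lesssim |\xi|^2$ and $\beta<2$ ensures integrability, while at infinity $\beta>0$ takes care of it. To identify $I(x)$, I would invoke the L\'evy--Khintchine representation of the symmetric $\beta$-stable characteristic exponent, which (with the normalisation \eqref{e:levymea} applied with $\alpha$ replaced by $\beta$) reads
\begin{equation*}
|\xi|^\beta=\int_{\RR^d}\bigl(1-\cos(\xi\cdot z)\bigr)\,\frac{\beta\,2^{\beta-1}\Gamma((d+\beta)/2)}{\pi^{d/2}\Gamma(1-\beta/2)}\,|z|^{-d-\beta}\,dz.
\end{equation*}
Solving for the unnormalised integral gives $I(x)=\frac{\pi^{d/2}\Gamma(1-\beta/2)}{\beta\,2^{\beta-1}\Gamma((d+\beta)/2)}\,|x|^\beta$.

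Finally I would combine the two evaluations and simplify the constant. Using the reflection identity $\Gamma(1-\beta/2)=-(\beta/2)\Gamma(-\beta/2)=(\beta/2)|\Gamma(-\beta/2)|$, valid because $\Gamma(-\beta/2)<0$ for $\beta\in(0,2)$ as noted in the introduction, the factor $\beta$ cancels and I obtain
\begin{equation*}
\int_0^\infty[p(t,0)-p(t,x)]\,t^{(d-\alpha+\beta)/\alpha}\,dt=\frac{\Gamma((d+\beta)/\alpha)\,|\Gamma(-\beta/2)|}{2^{d+\beta}\pi^{d/2}\Gamma((d+\beta)/2)}\,|x|^\beta,
\end{equation*}
which rearranges exactly to \eqref{e:pos}.

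The only real obstacle is bookkeeping on constants: making sure the Fourier normalisation $(2\pi)^{-d}$, the Gamma factors from the $t$-integral, the L\'evy-measure constant, and the reflection identity for $\Gamma(-\beta/2)$ all collapse to the stated coefficient. The analytic content (Tonelli and two explicit integrals) is routine once the Fourier representation of $p(t,0)-p(t,x)$ is written down; an alternative route, via subordination $p(t,x)=\int_0^\infty g_{\alpha/2}(t,s)(4\pi s)^{-d/2}e^{-|x|^2/(4s)}\,ds$ and Mellin calculus in $s$, would work equally well but involves more gymnastics with stable subordinator moments, so I would avoid it.
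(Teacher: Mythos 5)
Your argument is correct, and it takes a genuinely different route from the paper's. The paper proceeds via Gaussian subordination, writing $p(t,x)=\int_0^\infty g_s(x)\eta_t(s)\,ds$ with $g_s$ the Gaussian and $\eta_t$ the $\alpha/2$-stable subordinator density, and then invokes two explicit formulas from \cite{2016-KB-BD-PK-pa}: the moment identity $\int_0^\infty t^\gamma\eta_t(s)\,dt=\frac{\Gamma(\gamma+1)}{\Gamma(\alpha(\gamma+1)/2)}s^{\alpha(\gamma+1)/2-1}$, and a compensated Gaussian integral $\int_0^\infty(g_s(0)-g_s(x))s^\gamma\,ds$ evaluated by integration by parts. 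You instead pass to Fourier variables: $p(t,0)-p(t,x)=(2\pi)^{-d}\int(1-\cos(\xi\cdot x))e^{-t|\xi|^\alpha}\,d\xi$, then Tonelli and a scalar $\Gamma$-integral reduce the problem to $\int(1-\cos(\xi\cdot x))|\xi|^{-d-\beta}\,d\xi$, which you identify as the L\'evy--Khintchine representation of $|\xi|^\beta$ with the normalization of \eqref{e:levymea} applied with $\beta$ in place of $\alpha$. Both are clean; the paper's version keeps everything in real space and matches the method of \cite[Proposition 5]{2016-KB-BD-PK-pa} from which the companion identity \eqref{e:neg} is quoted, while your version is more self-contained if one regards the L\'evy--Khintchine constant as standard. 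I checked your constant bookkeeping: $(2\pi)^{-d}\pi^{d/2}=2^{-d}\pi^{-d/2}$, the factor $\Gamma(1-\beta/2)/(\beta\,2^{\beta-1})=|\Gamma(-\beta/2)|/2^\beta$ via reflection, and the exponents assemble to exactly the coefficient in \eqref{e:pos}. The Tonelli step is justified since the joint integrand $(1-\cos(\xi\cdot x))e^{-t|\xi|^\alpha}t^{(d-\alpha+\beta)/\alpha}$ is nonnegative, and the resulting iterated integral is finite by the estimates you note near $0$ and $\infty$.
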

\begin{proof}
We follow the method used in the proof of \cite[Proposition 5]{2016-KB-BD-PK-pa}. First, let $\eta_t(s)$ be the density function of the distribution of the $\alpha/2$-stable subordinator at time $t$. Let $g_t(x) = (4\pi t)^{-d/2} e^{-|x|^2/(4t)}$. Then,
\begin{align*}
p(t,x) = \int_0^\infty g_s(x) \eta_t(s)\, ds.
\end{align*}

By \cite[(24)]{2016-KB-BD-PK-pa}, for $\gamma < d/2-1$,
\begin{align*}
\int_0^\infty g_s(x) s^{\gamma} \,ds = 4^{-\gamma-1}\pi^{-d/2} \Gamma(d/2-\gamma-1) |x|^{2\gamma-d+2}.
\end{align*}
Then, by integrating by parts, for $d/2-1 < \gamma < d/2$, we get
\begin{align*}
\int_0^\infty (g_s(0)-g_s(x)) s^{\gamma} \,ds &= (4\pi)^{-d/2} \int_0^\infty (1-e^{-\frac{|x|^2}{4s}}) s^{\gamma-d/2} \,ds \\
&= \frac{(4\pi)^{-d/2}}{\gamma+1-d/2}  \int_0^\infty \frac{|x|^2}{4s^2}  e^{-\frac{|x|^2}{4s}} s^{\gamma+1-d/2} \,ds \\
&= \frac{|x|^2}{4(\gamma+1-d/2)}  \int_0^\infty g_s(x) s^{\gamma-1} \,ds \\
&= 4^{-\gamma-1}\pi^{-d/2} \frac{\Gamma(d/2-\gamma)}{\gamma+1-d/2} |x|^{2\gamma-d+2}.
\end{align*}
Note that, for any $\gamma>-1$,
\begin{align*}
\int_0^\infty t^\gamma \eta_t(s)\, dt = \frac{\Gamma(\gamma+1)}{\Gamma(\frac{\alpha(\gamma+1)}{2})} s^{\frac{\alpha(\gamma+1)}{2}-1},
\end{align*} see \cite[(23)]{2016-KB-BD-PK-pa}. (Note that the condition that $\gamma<d/\alpha -1$ is not required in the proof of \cite[(23)]{2016-KB-BD-PK-pa}.)
We further obtain
\begin{align*}
\int_0^\infty [p(t,0) - p(t,x)] t^{\frac{d-\alpha+\beta}{\alpha}} \,dt
&=\int_0^\infty \int_0^\infty[g_s(0) - g_s(x)] \eta_t(s) t^{\frac{d-\alpha+\beta}{\alpha}}\, dt\, ds  \\
&=\frac{\Gamma(\frac{d+\beta}{\alpha})}{\Gamma(\frac{d+\beta}{2})}\int_0^\infty[g_s(0) - g_s(x)] s^{\frac{d+\beta-2}{2}}\, ds \\
&= 2^{-d-\beta}\pi^{-d/2}\frac{\Gamma(\frac{d+\beta}{\alpha})}{\Gamma(\frac{d+\beta}{2})} \frac{\Gamma(\frac{2-\beta}{2})}{\frac{\beta}{2}} |x|^\beta.
\end{align*}
Since $\Gamma(\frac{2-\beta}{2}) = -\frac{\beta}{2} \Gamma(-\frac{\beta}{2})$, we get the assertion of the lemma.
\end{proof}

We recall from \cite[(25)]{2016-KB-BD-PK-pa} that for any $\beta \in
(0,d)$,
\begin{align}\label{e:neg}
|x|^{-\beta} = 2^{d-\beta} \pi^{d/2} \frac{\Gamma\left(\frac{d-\beta}{2}\right)}{\Gamma\left(\frac{d-\beta}{\alpha}\right) \Gamma\left(\frac{\beta}{2}\right)} \int_0^\infty p(t,x)t^{\frac{d-\alpha-\beta}{\alpha}} \,dt, \quad x\in \RR^d.
\end{align}
Thus, \eqref{e:pos} may be treated as an extension of the formula \eqref{e:neg} to negative $\beta$. Note that in the proof of \eqref{e:pos} we have to use a compensated kernel $p(t,0)$ to ensure convergence of the integral involved.

Now, let $\beta \in (0,\alpha)$. By \eqref{e:neg},
\begin{align*}
|x|^{\beta-\alpha} = 2^{d+\beta-\alpha} \pi^{d/2} \frac{\Gamma\left(\frac{d+\beta-\alpha}{2}\right)}{\Gamma\left(\frac{d+\beta-\alpha}{\alpha}\right) \Gamma\left(\frac{\alpha-\beta}{2}\right)} \int_0^\infty p(t,x)t^{\frac{d-2\alpha+\beta}{\alpha}}\,dt.
\end{align*}
On the other hand, let $f(r) = c r^{(d-\alpha+\beta)/\alpha}$ with
\begin{equation}\label{e:constant00}c=2^{d+\beta} \pi^{d/2} \frac{\Gamma\left(\frac{d+\beta}{2}\right)}{\Gamma\left(\frac{d+\beta}{\alpha}\right) \left|\Gamma\left(\frac{-\beta}{2}\right)\right|}.\end{equation} Then, according to \eqref{e:pos},
\begin{equation}\label{e:01}
    |x|^{\beta} = \int_0^\infty [p(t,0) -p(t,x)] f(t) \,dt.
\end{equation} Combining two equations above together, we will find that
\begin{align*}
    -\kappa_\beta |x|^{-\alpha} = \frac{\int_0^\infty p(r,x) f'(r) \,dr}{\int_0^\infty [p(r,0) -p(r,x)] f(r) \,dr},
\end{align*}
where
$$
    \kappa_\beta = \frac{2^\alpha\Gamma(\frac{\alpha-\beta}{2}) \Gamma(\frac{d+\beta}{2})}{\Gamma(\frac{-\beta}{2})\Gamma(\frac{d+\beta-\alpha}{2})}.
$$
In particular,
\begin{align}\label{e:02}
    -\kappa_\beta |x|^{\beta-\alpha} = \int_0^\infty p(r,x) f'(r) \,dr.
\end{align}
We note that $\kappa_\beta <0$ for any $\beta\in(0,\alpha)$ and $\lim_{\beta \to \alpha} \kappa_\beta = -\infty$. For convenience, let $\kappa_0=0.$
Moreover,
write
$$\kappa_\beta=-\frac{2^\alpha\Gamma(\frac{\alpha-\beta}{2}) \Gamma(\frac{d+\beta}{2})\frac{\beta}{2}}{\Gamma(\frac{2-\beta}{2})\Gamma(\frac{d+\beta-\alpha}{2})},$$ and let  $$r(t)=\frac{\Gamma(\frac{\alpha}{2}-t)\Gamma(\frac{d}{2}+t)}{\Gamma(\frac{d-\alpha}{2}+t)\Gamma(1-t)},\quad 0<t<\alpha/2.$$ Then, using the formula
$$ \frac{\Gamma'(x)}{\Gamma(x)}=-\gamma-\sum_{k=0}^\infty\left(\frac{1}{x+k}-\frac{1}{1+k}\right),\quad x>0$$
with the Euler-Mascheroni constant $\gamma$ (see \cite[(1.2.13)]{AAR}) and following the argument in the end of the proof for \cite[Proposition 5]{2016-KB-BD-PK-pa}, one can check that $r(t)$ is strictly increasing on $(0,\alpha/2)$, and so $\beta\mapsto \kappa_\beta$ is strictly decreasing on $(0,\alpha)$.

\begin{lemma}\label{L:2.3} For $\beta \in (0,\alpha)$, $t>0$ and $x\in\RR^d$, we have
\begin{equation}\label{e:03}
\int_{\RR^d} p(t,x,y)|y|^{\beta}\, dy  = |x|^{\beta} - \kappa_\beta\int_0^t \int_{\RR^d} p(s,x,y) |y|^{\beta-\alpha} \,dy \,ds
\end{equation}
\end{lemma}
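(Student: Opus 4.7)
The plan is to derive \eqref{e:03} by substituting the representation \eqref{e:01} for $|y|^\beta$ into the left-hand side, swapping the $y$- and $r$-integrals, and collapsing the resulting inner $y$-integral using the Chapman-Kolmogorov equation \eqref{eq:CKp}. This reduces the problem to a one-variable manipulation in the time variable $r$, which is then finished with \eqref{e:02}. Morally, I am proving the integrated form of $\Delta^{\alpha/2}|\cdot|^{\beta}(x)=-\kappa_\beta|x|^{\beta-\alpha}$, but I avoid computing the fractional Laplacian of a power function directly by working only with the two integral representations \eqref{e:01} and \eqref{e:02}.

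After inserting \eqref{e:01} and applying Fubini, the inner $y$-integral becomes
\begin{align*}
\int_{\RR^d}p(t,x,y)[p(r,0)-p(r,y)]\,dy = p(r,0)-p(t+r,x),
\end{align*}
using stochastic completeness $\int p(t,x,y)\,dy=1$, the symmetry $p(r,y)=p(r,y,0)$, and \eqref{eq:CKp}. Subtracting $|x|^\beta$, rewritten through \eqref{e:01}, then produces
\begin{align*}
\int_{\RR^d}p(t,x,y)|y|^\beta\,dy-|x|^\beta = \int_0^\infty[p(r,x)-p(t+r,x)]f(r)\,dr.
\end{align*}

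Next I would write $p(r,x)-p(t+r,x)=-\int_0^t\partial_r p(s+r,x)\,ds$, swap the order of integration, and integrate by parts in $r$. The boundary term $[f(r)p(s+r,x)]_0^\infty$ vanishes: at $r=0$ because $f(0)=0$, and at $r=\infty$ because $f(r)p(s+r,x)\lesssim r^{(\beta-\alpha)/\alpha}\to 0$ (using the bound $p(s+r,x)\lesssim r^{-d/\alpha}$ together with $\beta<\alpha$). This leaves
\begin{align*}
\int_0^\infty[p(r,x)-p(t+r,x)]f(r)\,dr = \int_0^t\int_0^\infty f'(r)p(s+r,x)\,dr\,ds.
\end{align*}
Applying Chapman-Kolmogorov once more in the form $p(s+r,x)=\int_{\RR^d}p(s,x,z)p(r,z)\,dz$ and then using \eqref{e:02} to evaluate $\int_0^\infty f'(r)p(r,z)\,dr=-\kappa_\beta|z|^{\beta-\alpha}$ delivers exactly $-\kappa_\beta\int_0^t\int_{\RR^d}p(s,x,z)|z|^{\beta-\alpha}\,dz\,ds$, which is \eqref{e:03}.

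The main technical obstacle is the rigorous justification of the two Fubini swaps and the integration by parts. The crucial input is the two-sided estimate $|p(r,0)-p(r,y)|\lesssim r^{-d/\alpha}\wedge(|y|^2 r^{-(d+2)/\alpha})$, obtained by scaling and Taylor-expanding $p(1,\cdot)$ at the origin (the first-order term vanishes by radial symmetry). Combined with the polynomial behavior of $f$ and $f'$ and the global bound $p(r,x)\lesssim r^{-d/\alpha}$, this yields absolute convergence on every subdomain appearing in the argument; once that is set up, the remaining calculations only use \eqref{e:01}, \eqref{e:02}, the semigroup property, and the fundamental theorem of calculus in the time variable.
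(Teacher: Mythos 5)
Your argument is the same as the paper's, read in the opposite direction: the paper starts from $-\kappa_\beta\int_0^t\int_{\RR^d} p(s,x,y)|y|^{\beta-\alpha}\,dy\,ds$, replaces $|y|^{\beta-\alpha}$ via \eqref{e:02}, collapses with Chapman--Kolmogorov, integrates by parts in $r$, uses the fundamental theorem of calculus in $s$, and finally splits off $|x|^\beta$ and recognizes the remaining integral as $\int p(t,x,y)|y|^\beta\,dy$ via \eqref{e:01} and stochastic completeness; you traverse exactly that chain from the other end. Both proofs use the identical set of identities (\eqref{e:01}, \eqref{e:02}, Chapman--Kolmogorov, integration by parts with the boundary term $f(r)p(s+r,x)$ vanishing at $0$ and $\infty$), so this is the paper's proof, not a genuinely different route.
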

\begin{proof}Let $f(r) = c r^{(d-\alpha+\beta)/\alpha}$ with the constant $c$ given by \eqref{e:constant00}.
By \eqref{e:02} and \eqref{e:01}, for any $t>0$ and $x\in \RR^d$,
\begin{align*}
-\kappa_\beta\int_0^t \int_{\RR^d} p(s,x,y) |y|^{\beta-\alpha} \,dy \,ds  &=  \int_0^t \int_0^\infty p(s+r,x) f'(r)\, dr\, ds\\
&= -\int_0^t \int_0^\infty \frac{\partial}{\partial s} p(s+r,x) f(r)\, dr\, ds\\
&=  \int_0^\infty [ p(r,x)-p(t+r,x)] f(r) \,dr \\
&= \int_0^\infty [ p(r,x)- p(r,0) + p(r,0)\! -\!p(t+r,x)] f(r)\, dr \\
&= -|x|^{\beta} \!+\!\! \int_0^\infty\! \!\int_{\RR^d} \!p(t,x,y)[p(r,0)\!-\!p(r,y)]f(r) \,dy\, dr\\
&= -|x|^{\beta} +  \int_{\RR^d} p(t,x,y) |y|^{\beta}\,dy,
\end{align*} where in the second equality we used the fact that $$ \lim_{r \to \infty}p(s+r,x) f(r)\le c_1\lim_{r\to\infty} (s+r)^{-d/\alpha} f(r)=0.$$ This completes the proof.
\end{proof}
Set $h_\beta(x)=|x|^\beta$. Letting $t\to0$ in \eqref{e:03},
informally it holds that \begin{equation}\label{e:eee}(\Delta^{\alpha/2}+\kappa_\beta|x|^{-\alpha}) h_\beta(x)=0\end{equation} for all $x\in \RR^d$. That is, the function $h_\beta$ is harmonic with respect to the operator $\Delta^{\alpha/2}+\kappa_\beta|x|^{-\alpha}.$

\emph{From now, we will fix $\delta\in (0,\alpha)$, and write $\kappa_\delta$ as $\kappa$ for simplicity.}
The following theorem is an analog of \cite[Theorem 3.1]{2017-KB-TG-TJ-DP}. Since there is no problem with convergence of the integrals involved, the proof is much simpler than that of \cite[Theorem 3.1]{2017-KB-TG-TJ-DP}.

\begin{theorem}\label{thm:0}
For $\beta \in (0, \alpha)$, $t>0$ and $x\in \RR^d$, we have
\begin{align}\label{eq:0}
    \int_{\RR^d} \tp(t,x,y)  |y|^{\beta}\, dy = |x|^{\beta} + (\kappa - \kappa_\beta) \int_0^t \int_{\RR^d} \tp(s,x,y) |y|^{\beta-\alpha}\,dy\,ds.
\end{align} In particular, for any $t>0$ and $x\in \RR^d$,
\begin{align}\label{eq:00}
\int_{\RR^d} \tp(t,x,y)  |y|^{\delta}\, dy = |x|^{\delta}.
\end{align}
\end{theorem}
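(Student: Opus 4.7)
My plan is to combine Duhamel's formula \eqref{e:for} for $\tp$ with the free-kernel identity of Lemma \ref{L:2.3}, and then to simplify an emerging double integral by a second use of Duhamel, this time applied to the function $h_{\beta-\alpha}:=|\cdot|^{\beta-\alpha}$. Throughout, let $P_t$ and $\tilde P_t$ denote the semigroups with kernels $p(t,x,y)$ and $\tp(t,x,y)$, and write $h_\beta(z)=|z|^\beta$. The special case \eqref{eq:00} will be immediate from \eqref{eq:0} upon taking $\beta=\delta$, since then $\kappa=\kappa_\delta=\kappa_\beta$ and the second term vanishes; hence I focus on \eqref{eq:0}.

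First I would start from the second equality in \eqref{e:for}, multiply by $|y|^\beta$, and integrate in $y$ to obtain
\begin{equation*}
(\tilde P_t h_\beta)(x)=(P_t h_\beta)(x)+\int_0^t \tilde P_{t-s}\bigl(q\cdot P_s h_\beta\bigr)(x)\,ds.
\end{equation*}
By Lemma \ref{L:2.3}, $(P_s h_\beta)(z)=|z|^\beta-\kappa_\beta\int_0^s (P_u h_{\beta-\alpha})(z)\,du$, and multiplication by $q(z)=\kappa|z|^{-\alpha}$ produces $q\cdot P_s h_\beta=\kappa h_{\beta-\alpha}-\kappa_\beta\int_0^s q\cdot P_u h_{\beta-\alpha}\,du$. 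Substituting, and using the change of variable $s\mapsto t-s$ in the first resulting integral, yields
\begin{equation*}
(\tilde P_t h_\beta)(x)=(P_t h_\beta)(x)+\kappa\int_0^t (\tilde P_s h_{\beta-\alpha})(x)\,ds-\kappa_\beta\int_0^t\!\!\int_0^s \tilde P_{t-s}\bigl(q\cdot P_u h_{\beta-\alpha}\bigr)(x)\,du\,ds.
\end{equation*}

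The key step is to evaluate the final double integral. Swapping $u$ and $s$ and changing variable $v=t-s$ for each fixed $u$ recasts it as $\int_0^t\!\int_0^{t-u}\tilde P_v(q\cdot P_u h_{\beta-\alpha})(x)\,dv\,du$. The very same Fubini-and-substitution trick applied to $\int_0^t (\tilde P_s-P_s)h_{\beta-\alpha}\,ds$, where the integrand is rewritten via \eqref{e:for} (second form) as $(\tilde P_s-P_s)h_{\beta-\alpha}=\int_0^s \tilde P_{s-u}(q\cdot P_u h_{\beta-\alpha})\,du$, produces the identical expression. Hence the double integral equals $\int_0^t[(\tilde P_s-P_s)h_{\beta-\alpha}](x)\,ds$. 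Plugging this back and expanding $(P_t h_\beta)(x)=|x|^\beta-\kappa_\beta\int_0^t (P_s h_{\beta-\alpha})(x)\,ds$ by Lemma \ref{L:2.3} once more, the two $\int_0^t P_s h_{\beta-\alpha}$ contributions cancel, leaving precisely \eqref{eq:0}.

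The only technicality is justifying the Fubini exchanges and the commutation of the $s$-integrals with the semigroups. Since all integrands can be arranged to be nonnegative, Tonelli suffices; the required finiteness follows from $\tp\le p$ together with the fact that, for $\beta\in(0,\alpha)$, both $\int_{\RR^d}p(t,x,y)|y|^\beta\,dy$ and $\int_{\RR^d}p(t,x,y)|y|^{\beta-\alpha}\,dy$ are finite and locally integrable in $t$, as already exploited in Lemma \ref{L:2.3}. This is exactly the \emph{no convergence problem} remarked just before the statement of the theorem, and is what makes this argument substantially simpler than its counterpart in \cite{2017-KB-TG-TJ-DP}.
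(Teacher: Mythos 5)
Your proof is correct, and in substance it is the same as the paper's: both start from the second form of Duhamel's formula \eqref{e:for}, feed in the free-kernel identity from Lemma~\ref{L:2.3}, and close the resulting double integral by a second invocation of Duhamel, with Fubini/Tonelli justified because the integrands have constant sign and the relevant $|y|^\beta$- and $|y|^{\beta-\alpha}$-moments of $p$ are finite. The only cosmetic difference is the point of departure --- you expand $\tilde P_t h_\beta$ directly and let the $\int_0^t P_s h_{\beta-\alpha}$ contributions cancel, whereas the paper begins with $-\kappa_\beta\int_0^t\!\int \tp(s,x,y)|y|^{\beta-\alpha}\,dy\,ds$ and applies Lemma~\ref{L:2.3} to the inner time integral before recognizing the remainder via Duhamel --- but the manipulations are algebraically equivalent.
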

\begin{proof} When $x=0$, both sides of \eqref{eq:0} and \eqref{eq:00} are equal to zero, since $\tp(t,0,y)=0$ for all $t>0$ and $y\in \RR^d$. Below, we consider the case that $x\in \RR_0^d$.
By \eqref{e:for} and \eqref{e:03}, for any $t>0$ and $x\in \RR_0^d$,
\begin{align*}
& -\kappa_\beta\int_0^t \int_{\RR^d} \tp(s,x,y) |y|^{\beta-\alpha}\, dy \,ds \\
&= -\kappa_\beta\int_0^t \int_{\RR^d} p(s,x,y)|y|^{\beta-\alpha}\, dy\, ds \\
&\quad- \kappa_\beta\int_0^t \int_{\RR^d} \int_u^t \int_{\RR^d} \tp(u,x,z) q(z) p(s-u,z,y) |y|^{\beta-\alpha}\,dz\, du\,  dy\, ds
 \\
&= -|x|^{\beta} + \int_{\RR^d} p(t,x,y) |y|^{\beta} \,dy\\
&
\quad-\int_0^t \int_{\RR^d} \tp(u,x,z) q(z) \left(|z|^\beta -\int_{\RR^d} p(t-u,z,y) |y|^{
\beta}  dy \right) \,dz\, du.
\end{align*}
Hence, according to \eqref{e:for} again, for any $t>0$ and $x\in \RR_0^d$,
\begin{align*}
-(\kappa_\beta - \kappa)\int_0^t \int_{\RR^d} \tp(s,x,y) |y|^{\beta-\alpha}\, dy\, ds &= -|x|^{\beta} + \int_{\RR^d} p(t,x,y) |y|^{\beta} \,dy \\
&\quad + \int_{\RR^d} (\tp(t,x,y)-  p(t,x,y)) |y|^{\beta}\,  dy   \\
&= -|x|^{\beta} + \int_{\RR^d} \tp(t,x,y) |y|^{\beta} \,dy,
\end{align*} which proves \eqref{eq:0}.
Now, \eqref{eq:00} follows by taking $\beta=\delta$. The proof is complete.
\end{proof}

Note that \eqref{e:eee} implies that
$$[\Delta^{\alpha/2}+\kappa |x|^{-\alpha}]h_\beta(x)=(\kappa-\kappa_\beta)|x|^{\beta-\alpha},\quad x\in \RR^d.$$ For this it is easy to obtain \eqref{eq:0} via the Feynman-Kac semigroup associated with $\Delta^{\alpha/2}+\kappa |x|^{-\alpha}.$

Although the following lemma is  not used in  the proofs,  we state it as one of the results. From this lemma we see that the right-hand side of \eqref{eq:logx} behaves near $0$ as $-\log|x|$.
\begin{lemma}
For any $t>0$ and $x\in \RR^d$, it holds that
\begin{align}\label{eq:logx}
    C\int_{\RR^d} \tp(t,x,y)  |y|^{\delta} (\ln|y| - \ln|x|)\, dy =  \int_0^t \int_{\RR^d} \tp(s,x,y) |y|^{\delta-\alpha}\,dy\,ds,
\end{align} where $$C:=\lim_{\beta \to \delta} \frac{\delta-\beta}{\kappa_\beta-\kappa}>0.$$
\end{lemma}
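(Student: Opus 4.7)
The plan is to differentiate the identity \eqref{eq:0} in the parameter $\beta$ at the special value $\beta=\delta$. At that value both sides of \eqref{eq:0} degenerate: the prefactor $\kappa-\kappa_\beta$ vanishes because $\kappa=\kappa_\delta$, and by \eqref{eq:00} the bracket $\int_{\RR^d}\tp(t,x,y)|y|^\beta\,dy-|x|^\beta$ vanishes as well. The lemma should therefore fall out of the standard $0/0$ L'H\^opital computation: the logarithmic weight on the left-hand side of \eqref{eq:logx} will arise from differentiating $|y|^\beta$ and $|x|^\beta$ in $\beta$, and the constant $C$ will arise from differentiating $\kappa_\beta$. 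Since $\tp(t,0,y)=0$ makes both sides trivially zero when $x=0$, I may and will assume $x\in\RR_0^d$ throughout.

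Concretely, I would divide \eqref{eq:0} by $\beta-\delta$ for $\beta\in(0,\alpha)\setminus\{\delta\}$ and let $\beta\to\delta$. On the left, $(\kappa-\kappa_\beta)/(\beta-\delta)\to-\kappa'_\delta$, which is strictly positive thanks to the strict monotonicity of $\beta\mapsto\kappa_\beta$ recorded just before Lemma \ref{L:2.3}. Granting the interchange of limit and integral, the right-hand quotient
\[
\frac{1}{\beta-\delta}\!\left[\int_{\RR^d}\tp(t,x,y)|y|^\beta\,dy-|x|^\beta\right]
\]
converges to $\int_{\RR^d}\tp(t,x,y)|y|^\delta\ln|y|\,dy-|x|^\delta\ln|x|$, and using \eqref{eq:00} to pull $\ln|x|$ back inside the integral collapses this to $\int_{\RR^d}\tp(t,x,y)|y|^\delta(\ln|y|-\ln|x|)\,dy$. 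Applying L'H\^opital to the definition of $C$ identifies $C=-1/\kappa'_\delta>0$, and a final rearrangement yields \eqref{eq:logx}.

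The main obstacle is justifying the two interchanges of limit and integral. I would fix a small $\varepsilon>0$ such that $[\delta-\varepsilon,\delta+\varepsilon]\subset(0,\alpha)$ and $\delta-\varepsilon>\alpha-d$ (possible because $\alpha<d$). Using $\tp\le p$ together with \eqref{eq:pest}, a dominating integrable envelope for $\tp(s,x,y)|y|^{\beta-\alpha}$ on $[0,t]\times\RR^d$, uniform in $\beta$ in this interval, is obtained by splitting $|y|\le 1$ (where $|y|^{\beta-\alpha}\le|y|^{\delta-\varepsilon-\alpha}$ integrates against $p$ since $d+\delta-\varepsilon-\alpha>0$) and $|y|>1$ (where $|y|^{\beta-\alpha}\le|y|^{\delta+\varepsilon-\alpha}$ has integrable product with the tail $s/|x-y|^{d+\alpha}$ because $\delta+\varepsilon<\alpha$). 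Dominated convergence then yields continuity of the left-hand side of \eqref{eq:0} at $\beta=\delta$. An essentially identical split handles the integrand $\tp(t,x,y)|y|^\beta|\ln|y||$ and justifies differentiation under the integral sign for the right-hand side, after which the computation above produces \eqref{eq:logx}.
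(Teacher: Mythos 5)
Your proposal is correct and follows essentially the same route as the paper: rearrange \eqref{eq:0}, use \eqref{eq:00} to fold $|x|^\beta$ into the $y$-integral, and pass to the limit $\beta\to\delta$ by dominated convergence, identifying $C$ via the L'H\^opital quotient $\lim_{\beta\to\delta}(\delta-\beta)/(\kappa_\beta-\kappa)=-1/\kappa'_\delta$. The paper's one-line computation $\lim_{\beta\to\delta}\frac{1}{\kappa_\beta-\kappa}\int\tp(t,x,y)|y|^\delta(|x|^{\beta-\delta}-|y|^{\beta-\delta})\,dy$ is exactly your "differentiate in $\beta$" step written compactly, and your explicit dominating envelope simply spells out the DCT appeal the paper leaves implicit.
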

\begin{proof}
By \eqref{eq:0}, \eqref{eq:00} and the dominated convergence theorem,
\begin{align*}
\int_0^t \int_{\RR^d} \tp(s,x,y) |y|^{\delta-\alpha}\,dy\,ds &= \lim_{\beta \to \delta} \frac{1}{\kappa_\beta-\kappa} \int_{\RR^d} \tp(t,x,y)  |y|^{\delta} (|x|^{\beta-\delta} - |y|^{\beta-\delta})\, dy \\
&= C \int_{\RR^d} \tp(t,x,y)  |y|^{\delta} (\ln|y| - \ln|x|) \,dy,
\end{align*} proving the desired assertion.
\end{proof}
\section{Two-sided estimates and joint continuity of $\tp(t,x,y)$}
\subsection{Upper bounds of $\tp(1,x,y)$}
For any $t>0$ and $x\in \RR^d$, define
$$
H(t,x) = \int_{\RR^d} \tp(t,x,y)\,dy.
$$
Note that, by Lemma \ref{L:sca}, for all $t>0$ and $x\in \RR^d$, we have
\begin{align}\label {eq:H_1}
H(t,x) = \int_{\RR^d} t^{-d/\alpha}\tp(1,t^{-1/\alpha}x,t^{-1/\alpha}y)\, dy = H(1,t^{-1/\alpha}x).
\end{align}
On the other hand, by the fact $0 \le \tp(t,x,y)\le p(t,x,y)$ for any $t>0$ and $x,y\in \RR^d$, it also holds that
\begin{align}\label{eq:H_2}
0 \le H(t,x) \le \int_{\RR^d} p(t,x,y)\, dy = 1,\quad t>0,\; x\in \RR^d.
\end{align}

\begin{prop}\label{lem:mass_est}
There is a constant $C>0$ such that for all $x\in \RR^d$,
$$
H(1,x) \le C(1 \land |x|^\delta).
$$
\end{prop}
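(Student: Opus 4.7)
The plan is to combine three tools: the moment identity from Theorem \ref{thm:0}, the Chapman--Kolmogorov equation \eqref{eq:CKpt}, and the scaling \eqref{eq:H_1}, together with a self-improving iteration. Since $H(1,x) \le 1$ by \eqref{eq:H_2}, the desired bound is automatic for $|x|\ge 1$, so I focus on $|x| < 1$. Splitting at $|y|=1$ and using $1 \le |y|^\delta$ on $\{|y|\ge 1\}$ together with Theorem \ref{thm:0} gives
$$\int_{|y|>1}\tilde p(1,x,y)\,dy \le \int \tilde p(1,x,y)|y|^\delta\,dy = |x|^\delta,$$
so $H(1,x) \le |x|^\delta + g(x)$, where $g(x) := \int_{|y|\le 1}\tilde p(1,x,y)\,dy$, and it remains to show $g(x) \le C|x|^\delta$.

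The core of the argument is a functional inequality of the form
$$H(1,x) \le (1-\varepsilon_0)\,H(s,x) + C(s)\,|x|^\delta, \qquad s \in (0,1),$$
where $\varepsilon_0 > 0$ comes from a quantitative a priori reduction $\sup_{|w|\le 1}H(1,w) \le 1-\varepsilon_0$ (call this Claim~A). To derive the inequality I would write $H(1,x) = \int \tilde p(s,x,z) H(1-s,z)\,dz$ and split the $z$-integration at $|z| = (1-s)^{1/\alpha}$: on $\{|z|\le (1-s)^{1/\alpha}\}$ the scaling identity \eqref{eq:H_1} together with Claim~A gives $H(1-s,z)\le 1-\varepsilon_0$, while on the complementary region the trivial bound $\le 1$ combined with Theorem \ref{thm:0} yields the $C(s)|x|^\delta$ term. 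Iterating this inequality via $H(s,x) = H(1,s^{-1/\alpha}x)$ produces, after $n$ steps,
$$H(1,x) \le (1-\varepsilon_0)^n H(s^n,x) + C(s)\,|x|^\delta \sum_{k=0}^{n-1}\bigl((1-\varepsilon_0)s^{-\delta/\alpha}\bigr)^k;$$
choosing $s$ sufficiently close to $1$ so that $(1-\varepsilon_0)s^{-\delta/\alpha} < 1$ makes the geometric series converge and the leading term vanish as $n\to\infty$, giving $H(1,x) \le C|x|^\delta$ as required.

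The main obstacle is to establish Claim~A, the strict uniform bound $\sup_{|w|\le 1}H(1,w) < 1$. Heuristically this reflects the nontrivial mass loss induced by the negative Hardy potential $q(x) = \kappa|x|^{-\alpha}$, but making it uniform in $w$ requires care since $H(1,w) \to 1$ as $|w| \to \infty$. A natural route exploits $\tilde p(1,0,y) = 0$ together with the domination $\tilde p \le p$ and dominated convergence to deduce $\lim_{w\to 0}H(1,w) = 0$; combined with the continuity of $\tilde p$ on $(0,\infty)\times\RR_0^d\times\RR_0^d$ and the strict pointwise inequality $\tilde p < p$ visible from Duhamel's formula \eqref{e:for}, a compactness argument on $\{|w|\le 1\}$ then yields the uniform gap $\varepsilon_0 > 0$ needed to close the iteration.
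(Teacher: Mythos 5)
Your overall plan — a moment identity plus a self-improving iteration driven by scaling — is the same framework the paper uses, and your iteration step (once the functional inequality is granted) is correct. The problem is Claim~A, which is where your argument breaks down.

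Your argument for Claim~A has a circularity. You propose to obtain $\lim_{w\to 0}H(1,w)=0$ by dominated convergence from $\tp(1,0,y)=0$. But $\tp(1,0,y)=0$ in this paper is a \emph{definition}, not a limit; to apply dominated convergence you need $\tp(1,w,y)\to 0$ as $w\to 0$ for a.e.\ $y$, which is exactly the continuity at the origin whose proof (Theorem~\ref{thm:cont}) the paper deduces \emph{from} the two-sided bounds — that is, from the very upper bound you are trying to establish here. (The continuity on $\RR^d_0\times\RR^d_0$ in Proposition~\ref{pro-con} is independent of the upper bounds, so invoking it for interior points is fine, merely a reordering; the circularity is specifically at the origin.) Without a workable route to $\lim_{w\to 0}H(1,w)=0$, the compactness argument on $\{|w|\le 1\}$ that is supposed to extract the gap $\varepsilon_0$ does not get off the ground.

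The paper sidesteps Claim~A entirely, and the mechanism is worth noting. Instead of trying to show $\tp$ has strictly submarkovian total mass on a compact set, it factors through an intermediate time via Chapman--Kolmogorov: writing $\tp(1,x,y)$ as a triple integral of three copies of $\tp(1/3,\cdot,\cdot)$ and using only $\tp(1/3,z,w)\le p(1/3,z,w)\le c$ gives
$\tp(1,x,y)\le c\,H(1,3^{1/\alpha}x)\,H(1,3^{1/\alpha}y)\le c\,H(1,3^{1/\alpha}x)$.
Integrating this over a small ball $B(0,r)$ yields $\int_{B(0,r)}\tp(1,x,y)\,dy\le c\,|B(0,r)|\,H(1,3^{1/\alpha}x)$. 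Here the contraction factor $\eta=c|B(0,r)|$ can be made as small as desired simply by shrinking $r$; no continuity, no compactness, and no strictness of $\tp<p$ is needed. Combined with the moment identity (which handles the integral over $B(0,r)^c$ just as in your step), one gets $H(1,x)\le\eta\,H(1,3^{1/\alpha}x)+r^{-\delta}|x|^\delta$ with $\eta\,3^{\delta/\alpha}<1$, and the same iteration you describe then closes the argument. Incidentally, if you want to make your version work, $\lim_{w\to 0}H(1,w)=0$ can indeed be shown without circularity by the cruder estimate $H(1,w)\le\int_{|y|\le\epsilon}p(1,w,y)\,dy+\epsilon^{-\delta}\int\tp(1,w,y)|y|^\delta\,dy\le c\epsilon^d+\epsilon^{-\delta}|w|^\delta$, but that inequality is essentially the opening move of the paper's own proof, at which point following the paper's route is shorter.
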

\begin{proof}
By the Chapman-Kolmogorov equation \eqref{eq:CKpt} (which holds true for all $x,y\in \RR^d$) and \eqref{eq:H_1}, for any
$x,y\in \RR^d   $,
\begin{equation}\label{eq:tp13}\begin{split}
\tp(1,x,y) &= \int_{\RR^d} \int_{\RR^d} \tp(1/3,x,z) \tp(1/3,z,w) \tp(1/3,w,y)\, dw \,dz \\
&\le \int_{\RR^d} \int_{\RR^d} \tp(1/3,x,z)\cdot c \cdot \tp(1/3,w,y)\, dw\, dz \\
&= c H(1/3,x) H(1/3,y) = c H(1,3^{1/\alpha}x) H(1,3^{1/\alpha}y),
\end{split}\end{equation}
where the constant $c$ comes from the estimate $\tp(1/3,x,y) \le p(1/3,x,y) \le c$.

Denote by $|B(0,r)|$ the Lebesgue measure of $B(0,r)$.  Fix $r>0$
small enough such that $\eta := c|B(0,r)|<3^{-\delta/\alpha}$.
According to  \eqref{eq:0}, \eqref{eq:tp13} and \eqref{eq:H_2}, for
any $x\in \RR^d$, we have
\begin{equation}\label{eq2:mass_est}\begin{split}
H(1,x) & \le \int_{B(0,r)} \tp(1,x,y) \,dy + \frac{1}{r^{\delta}} \int_{B(0,r)^c} \tp(1,x,y) |y|^\delta \,dy \\
&\le \int_{B(0,r)} \tp(1,x,y) \, dy +M|x|^\delta\le \int_{B(0,r)} cH(1,3^{1/\alpha}x) \,dy  + M|x|^\delta,\\
&= \eta H(1,3^{1/\alpha}x) + M|x|^\delta,
\end{split}\end{equation}
where $M ={r^{-\delta}}$. Now, we can iterate the inequality
\eqref{eq2:mass_est} to obtain that for all $x\in \RR^d$,
\begin{align*}
H(1,x) &\le \eta H(1,3^{1/\alpha}x) + M|x|^\delta \\
&\le \eta \left[\eta H(1,3^{2/\alpha}x) + M|3^{1/\alpha}x|^\delta\right] + M|x|^\delta \\
&\le \eta^2 \left[ \eta H(1,3^{3/\alpha}x)+M |3^{2/\alpha}x|^\delta\right] + M(1+\eta 3^{\delta/\alpha})|x|^\delta\\
&\le \cdots\\
&\le  \eta^n H(1,3^{n/\alpha}x) + M[1+\eta 3^{\delta/\alpha} + \cdots + (\eta 3^{\delta/\alpha})^{n-1}]|x|^\delta.
\end{align*}
By \eqref{eq:H_2}, taking $n \to \infty$ in the inequality above, we
get that for any $x\in \RR^d$,
\begin{align*}
H(1,x) \le \frac{M}{1-\eta 3^{\delta/\alpha}} |x|^\delta,
\end{align*} yielding the desired assertion.
\end{proof}

Applying Proposition \ref{lem:mass_est} to \eqref{eq:tp13}, we immediately get

\begin{corollary}\label{eq:ub1}
There is a constant $C>0$ such that
$$
\tp(1,x,y) \le C (1 \land |x|^\delta) (1 \land |y|^\delta), \quad
x,y \in \RR^d.
$$
\end{corollary}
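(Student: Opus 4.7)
The plan is essentially to recycle the three–step Chapman--Kolmogorov decomposition that was already set up during the proof of Proposition \ref{lem:mass_est} and feed the mass bound just proved into it. Concretely, the inequality \eqref{eq:tp13} reads
\begin{equation*}
\tp(1,x,y) \le c\, H(1,3^{1/\alpha}x)\, H(1,3^{1/\alpha}y),\qquad x,y\in\RR^d,
\end{equation*}
so the product structure of the upper bound is already baked in. All that is missing is a factor of $(1\wedge |x|^\delta)$ and $(1\wedge |y|^\delta)$ in front.

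Next, I would apply Proposition \ref{lem:mass_est} separately to each $H$-factor: for every $z\in\RR^d$,
\begin{equation*}
H(1,3^{1/\alpha}z)\le C\bigl(1\wedge |3^{1/\alpha}z|^\delta\bigr)\le C\, 3^{\delta/\alpha}\bigl(1\wedge |z|^\delta\bigr).
\end{equation*}
Substituting this twice into the previous display and absorbing the constant $c\, C^2 3^{2\delta/\alpha}$ into a new constant $C$ yields the claimed bound
\begin{equation*}
\tp(1,x,y)\le C(1\wedge |x|^\delta)(1\wedge |y|^\delta).
\end{equation*}

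There is no real obstacle; the statement is genuinely a corollary, and the two ingredients (the Chapman--Kolmogorov splitting used in the derivation of \eqref{eq:tp13} and the pointwise mass estimate for $H(1,\cdot)$) have both been established. The only mild subtlety worth noting is that \eqref{eq:tp13} relies on the uniform bound $\tp(1/3,\cdot,\cdot)\le p(1/3,\cdot,\cdot)\le c$, but this is already used in the proof of Proposition \ref{lem:mass_est} and needs no further justification here.
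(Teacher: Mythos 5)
Your proof is correct and is exactly the paper's argument: the paper simply states that applying Proposition \ref{lem:mass_est} to \eqref{eq:tp13} immediately yields the corollary, which is precisely the substitution you carried out.
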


Next, we further refine upper bounds for $\tp(t,x,y)$.

\begin{lemma}\label{eq:1}
For any $t >0$ and  $x,y \in \RR^d$, we have
$$
\int_{B(y,|x-y|/2)} p(t,x,z)p(t,z,y) \,dz \le \frac{p(2t,x,y)}{2}.
$$
\end{lemma}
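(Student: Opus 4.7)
The plan is to exploit the radial symmetry of $p(t,x,y)$ together with the fact that $p$ depends only on $|x-y|$, and then apply the Chapman-Kolmogorov equation \eqref{eq:CKp} for $p$.

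First I would observe that the two balls $B(x,|x-y|/2)$ and $B(y,|x-y|/2)$ are disjoint: if $z$ lay in both, the triangle inequality would force $|x-y|<|x-y|$. Next, let $\phi$ be the Euclidean isometry (reflection across the perpendicular bisector hyperplane of the segment $\overline{xy}$) that swaps $x$ and $y$. Since $p(t,u,v)$ depends only on $t$ and $|u-v|$, we have $p(t,x,z)=p(t,y,\phi(z))$ and $p(t,z,y)=p(t,\phi(z),x)$ for every $z\in\RR^d$. Because $\phi$ is an isometry with $\phi(B(x,r))=B(y,r)$, the change of variables $z\mapsto\phi(z)$ (whose Jacobian is $1$) combined with the symmetry $p(t,u,v)=p(t,v,u)$ yields
\begin{equation*}
\int_{B(x,|x-y|/2)} p(t,x,z)p(t,z,y)\,dz = \int_{B(y,|x-y|/2)} p(t,x,z)p(t,z,y)\,dz.
\end{equation*}

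Finally, combining disjointness of the two balls with Chapman-Kolmogorov \eqref{eq:CKp} and the nonnegativity of $p$,
\begin{equation*}
p(2t,x,y) = \int_{\RR^d} p(t,x,z)p(t,z,y)\,dz
\ge \int_{B(x,|x-y|/2)\cup B(y,|x-y|/2)} p(t,x,z)p(t,z,y)\,dz
= 2\int_{B(y,|x-y|/2)} p(t,x,z)p(t,z,y)\,dz,
\end{equation*}
and dividing by $2$ gives the claim.

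The only real content is the symmetrization step, and it is not so much an obstacle as a matter of stating the reflection carefully; once one notices that the radial nature of $p$ lets one swap the roles of $x$ and $y$, everything else is bookkeeping. No estimate \eqref{eq:pest} is needed, so the bound is sharp in the sense that equality essentially reflects the geometric partition of $\RR^d$ into the two half-spaces determined by the bisector of $\overline{xy}$.
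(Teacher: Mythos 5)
Your proof is correct and follows essentially the same route as the paper: symmetrize the integral over the two congruent balls centered at $x$ and $y$, then apply Chapman--Kolmogorov \eqref{eq:CKp}. The paper simply says \emph{by symmetry} where you spell out the reflection isometry, and you also make explicit the disjointness of $B(x,|x-y|/2)$ and $B(y,|x-y|/2)$, which the paper leaves implicit but which is indeed needed for the final inequality against the full integral.
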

\begin{proof}
Fix $t>0$ and $x,y\in \RR^d$. By symmetry,
\begin{align*}
\int_{B(y,|x-y|/2)} p(t,x,z)p(t,z,y) \,dz = \int_{B(x,|x-y|/2)} p(t,x,z)p(t,z,y)\, dz.
\end{align*}
Hence, by \eqref{eq:CKp},
\begin{align*}
&2\int_{B(y,|x-y|/2)} p(t,x,z)p(t,z,y)\, dz\\ &= \int_{B(y,|x-y|/2)} p(t,x,z)p(t,z,y)\, dz + \int_{B(x,|x-y|/2)} p(t,x,z)p(t,z,y) \,dz\\
&\le \int_{\RR^d} p(t,x,z)p(t,z,y)\, dz = p(2t,x,y).
\end{align*} This completes the proof.
\end{proof}

\begin{lemma}\label{lem:intsplit}
There exists a constant $M>0$ such that for any  $t > 0$ and $x,y
\in \RR^d$, we have
$$
\tp(t,x,y) \le \int_{B(y,|x-y|/2)} \tp(t/2,x,z)\tp(t/2,z,y) \,dz + M h(t,x) p(t,x,y),
$$ where $h(t,x) = t^{-\delta/\alpha}|x|^\delta$.
\end{lemma}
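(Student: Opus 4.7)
My approach is to start from the Chapman--Kolmogorov identity \eqref{eq:CKpt} applied at the midpoint,
$$
\tp(t,x,y) = \int_{\RR^d} \tp(t/2,x,z)\,\tp(t/2,z,y)\,dz,
$$
and split the domain of integration into the ball $B(y,|x-y|/2)$ and its complement. The contribution from the ball is exactly the first term on the right-hand side of the claim, so the whole task reduces to proving the tail estimate
$$
J(t,x,y):=\int_{B(y,|x-y|/2)^c} \tp(t/2,x,z)\,\tp(t/2,z,y)\,dz \;\le\; M\,h(t,x)\,p(t,x,y).
$$

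For $z$ in the far region we have $|z-y|\ge |x-y|/2$, so by the two-sided bound \eqref{eq:pest} applied at $(t/2,z-y)$,
$$
\tp(t/2,z,y)\le p(t/2,z,y)\le c_1\Bigl(t^{-d/\alpha}\wedge \tfrac{t}{|x-y|^{d+\alpha}}\Bigr)\le c_2\,p(t,x,y),
$$
uniformly in $z\in B(y,|x-y|/2)^c$; the middle step is a routine comparison of $(t/2)^{-d/\alpha}$ with $t^{-d/\alpha}$ and of $(t/2)/(|x-y|/2)^{d+\alpha}$ with $t/|x-y|^{d+\alpha}$, each differing only by a constant factor. Pulling this bound out of the integral gives
$$
J(t,x,y)\le c_2\,p(t,x,y)\int_{\RR^d}\tp(t/2,x,z)\,dz = c_2\,p(t,x,y)\,H(t/2,x).
$$

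Finally, I control $H(t/2,x)$ by $h(t,x)=t^{-\delta/\alpha}|x|^\delta$ via the scaling identity \eqref{eq:H_1} together with Proposition \ref{lem:mass_est}:
$$
H(t/2,x)=H(1,(t/2)^{-1/\alpha}x)\le C\bigl(1\wedge (t/2)^{-\delta/\alpha}|x|^\delta\bigr).
$$
When $|x|\le t^{1/\alpha}$ this already gives $H(t/2,x)\le C'\,h(t,x)$; when $|x|>t^{1/\alpha}$ I invoke instead the trivial bound $H(t/2,x)\le 1\le 2^{\delta/\alpha}\,h(t,x)$ coming from \eqref{eq:H_2}. Substituting this into the previous display closes the argument. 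I do not anticipate any serious obstacle: once the far-field integral is seen to factorise into a pointwise $p$-kernel comparison times a mass integral, the three ingredients (Chapman--Kolmogorov for $\tp$, the sharp estimate \eqref{eq:pest}, and Proposition \ref{lem:mass_est}) are already at hand; the only care needed is the scaling bookkeeping in the middle step and the two-regime split used to bound $H(t/2,x)$.
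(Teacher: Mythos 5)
Your proof is correct and follows essentially the same route as the paper: Chapman--Kolmogorov at the midpoint, the pointwise far-field bound $\tp(t/2,z,y)\le p(t/2,z,y)\le c\,p(t,x,y)$ for $z\in B(y,|x-y|/2)^c$, and then controlling the remaining mass integral $\int \tp(t/2,x,z)\,dz = H(t/2,x)$ by $h(t,x)$ via scaling and Proposition~\ref{lem:mass_est}. The only (harmless) redundancy is your two-regime split in the last step: since $1\wedge a\le a$, the bound $H(t/2,x)\le C\bigl(1\wedge (t/2)^{-\delta/\alpha}|x|^\delta\bigr)\le C\,2^{\delta/\alpha}h(t,x)$ holds in both cases at once, which is how the paper streamlines it.
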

\begin{proof}
By \eqref{eq:CKpt}, for any $t>0$
and $x,y\in \RR^d$, we have
\begin{align*}
\tp(t,x,y) &= \int_{B(y,|x-y|/2)} \tp(t/2,x,z)\tp(t/2,z,y) \,dz \\
&\quad+ \int_{B(y,|x-y|/2)^c} \tp(t/2,x,z)\tp(t/2,z,y) \,dz.
\end{align*}
Note that, according to \eqref{eq:H_1} and Proposition \ref{lem:mass_est}, \begin{align*}
\int_{\RR^d} \tp(t,x,y) \,dy =H(t,x)=H(1,t^{-1/\alpha}x)\le c h(t,x),\quad t>0,\;x\in \RR^d.
\end{align*}
For $t>0$ and $x,y,z\in \RR^d$ with $z \in B(y,|x-y|/2)^c$,  we have $$\tp(t/2,z,y) \le p(t/2,z,y) \le c_1 p(t,x,y).$$ Hence, for any $t>0$ and $x,y\in \RR^d$,
\begin{align*}
\int_{B(y,|x-y|/2)^c} \tp(t/2,x,z)\tp(t/2,z,y)\, dz &\le c_1  p(t,x,y)\int_{B(y,|x-y|/2)^c} \tp(t/2,x,z)\, dz \\
&\le c c_1 h(t/2,x) p(t,x,y)\\
& \le M h(t,x) p(t,x,y),
\end{align*}
thus we get the assertion of the lemma.
\end{proof}

\begin{theorem}\label{thm:UB}{\bf (Upper bounds)}
There is a constant $C>0$ such that for all $x, y \in \RR^d$,
\begin{align}\label{EQ:UB}
\tp(1,x,y) \le C (1\land |x|^\delta)(1\land |y|^\delta) p(1,x,y).
\end{align}
\end{theorem}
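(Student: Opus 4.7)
The plan is to first reduce to $t=1$ via the scaling relation in Lemma~\ref{L:sca}, so that \eqref{EQ:UB} at general $t>0$ follows by the change of variables $(x,y)\mapsto(t^{-1/\alpha}x,t^{-1/\alpha}y)$. Using the symmetry $\tp(1,x,y)=\tp(1,y,x)$, assume $|x|\le|y|$, and set $F(x,y):=(1\wedge|x|^\delta)(1\wedge|y|^\delta)p(1,x,y)$. Several regimes are immediate: if $|x|,|y|\ge 1$, then $F\approx p(1,x,y)$ and $\tp\le p$ suffices; if $|x|,|y|\le 1$, then $|x-y|\le 2$, so $p(1,x,y)\approx 1$ by \eqref{eq:pest} and $F\approx|x|^\delta|y|^\delta$, which is exactly the bound from Corollary~\ref{eq:ub1}; and if $|x|<1\le|y|$ with $|x-y|\le 2$, then again $p(1,x,y)\approx 1$ and Corollary~\ref{eq:ub1} gives $\tp(1,x,y)\le C|x|^\delta\approx CF(x,y)$. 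This leaves the genuine case $|x|<1\le|y|$ with $|x-y|>2$, in which the target simplifies to $\tp(1,x,y)\le C|x|^\delta p(1,x,y)$.

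For this main case I would set up a self-improving iteration built from Lemma~\ref{lem:intsplit} and Lemma~\ref{eq:1}. First apply Lemma~\ref{lem:intsplit} at $t=1$ to obtain
\[
\tp(1,x,y)\le I^{(0)}(x,y)+M|x|^\delta p(1,x,y),\qquad I^{(0)}(x,y):=\int_{B(y,|x-y|/2)}\tp(1/2,x,v_1)\tp(1/2,v_1,y)\,dv_1,
\]
and iterate by splitting the $x$-side factor at each level: at step $k\ge 1$, apply Lemma~\ref{lem:intsplit} to $\tp(2^{-k},x,\cdot)$ inside $I^{(k-1)}$ to produce a more deeply nested integral $I^{(k)}$ together with an error $M\,h(2^{-k},x)\,p(2^{-k},x,\cdot)=M\,2^{k\delta/\alpha}|x|^\delta p(2^{-k},x,\cdot)$. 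To control this error I would bound every remaining $\tp$-kernel by the corresponding $p$ (using $\tp\le p$) and then integrate the intermediate variables from innermost to outermost, applying Lemma~\ref{eq:1} at each level to contract two equal-time $p$-factors across the ball $B(v_j,|x-v_j|/2)$; each contraction contributes a factor $\tfrac12$, and after $k$ of them the step-$k$ error collapses to $M(2^{\delta/\alpha-1})^{k}|x|^\delta p(1,x,y)$. After $n$ iterations this yields
\[
\tp(1,x,y)\le I^{(n)}(x,y)+M|x|^\delta p(1,x,y)\sum_{k=0}^{n}\bigl(2^{\delta/\alpha-1}\bigr)^{k}.
\]

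To finish, I would dispose of the residual $I^{(n)}$: it is an $(n{+}2)$-fold $\tp$-integral whose time parameters sum to $1$, so bounding every $\tp$ by $p$ and then telescoping $n+1$ consecutive applications of Lemma~\ref{eq:1} yields $I^{(n)}\le 2^{-(n+1)}p(1,x,y)\to 0$ as $n\to\infty$. Because $\delta<\alpha$ forces $2^{\delta/\alpha-1}<1$, the geometric series converges; passing to the limit gives $\tp(1,x,y)\le\frac{M}{1-2^{\delta/\alpha-1}}\,|x|^\delta p(1,x,y)$, which is exactly \eqref{EQ:UB} in the main case.

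The hard part will be the bookkeeping in the iteration. The error created at level $k$ grows by $2^{k\delta/\alpha}$ from the scaling of $h$, while each application of Lemma~\ref{eq:1} only saves a factor $\tfrac12$, so one must verify that exactly $k$ contractions are available at step $k$, which depends on each split producing a fresh pair of equal-time $p$-kernels centered in the right nested ball, and that the residual $I^{(n)}$ also telescopes cleanly. The hypothesis $\delta<\alpha$ is precisely what makes the net ratio $2^{\delta/\alpha-1}$ strictly less than $1$, allowing the geometric series to converge; this parallels the blow-up of $\kappa_\delta$ as $\delta\to\alpha^-$ discussed in the introduction.
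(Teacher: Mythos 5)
Your proposal is correct and follows essentially the same route as the paper: both iterate Lemma~\ref{lem:intsplit} together with the contraction Lemma~\ref{eq:1}, with the scaling identity $\eta h(t/2,x)=2^{(\delta-\alpha)/\alpha}h(t,x)$ producing the convergent geometric series. The only difference is presentational — the paper packages the bookkeeping as a clean induction giving $\tp(t,x,y)\le[\eta^{n+1}+(1+\nu+\cdots+\nu^n)Mh(t,x)]p(t,x,y)$ for all $t>0$, whereas you explicitly unroll the nested integrals $I^{(n)}$; your extra case split on $|x-y|\lessgtr 2$ is harmless but unnecessary, since the iterated bound $\tp(1,x,y)\le\tfrac{M}{1-\nu}|x|^\delta p(1,x,y)$ already covers all of $|x|<1\le|y|$.
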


\begin{proof}  Let $\eta = 1/2$ and $\nu = 2^{(\delta-\alpha)/\alpha} <1$. As in Lemma \ref{lem:intsplit}, denote $h(t,x) = t^{-\delta/\alpha} |x|^\delta$. Note that
\begin{align}\label{eq0:UB}
\eta h(t/2,x) = \tfrac{1}{2} |(t/2)^{-1/\alpha}x|^\delta = 2^{(\delta-\alpha)/\alpha} |t^{-1/\alpha}x|^\delta = \nu h(t,x), \quad t>0,\, x\in\RR^d.
\end{align}
Let $M$ be the constant from Lemma \ref{lem:intsplit}. We will claim that for $n \ge 0$,
\begin{align}\label{e:proof}
\tp(t,x,y) \le [\eta^{n+1} + (1+\nu+\ldots+\nu^n)M h(t,x)] p(t,x,y),
\quad t>0, \; x,y\in \RR^d.
\end{align}
Indeed, for $t\in(0,1]$ and $x,y\in \RR^d$, by Lemmas \ref{eq:1} and
\ref{lem:intsplit},
\begin{align*}
\tp(t,x,y) \le [\eta + M h(t,x)]p(t,x,y),
\end{align*} where we used the fact $\tp(t,x,y)\le p(t,x,y)$ for any $t>0$ and $x,y\in \RR^d$.
Next, we use induction. Suppose that
\begin{align*}
\tp(t,x,y) \le [\eta^{n} + (1+\nu+\ldots+\nu^{n-1})M h(t,x)]
p(t,x,y),\quad t>0, \; x,y\in \RR^d.
\end{align*}
Then, for any $t>0$ and $x,y\in \RR^d$, by Lemma \ref{lem:intsplit},
Lemma \ref{eq:1} and \eqref{eq0:UB},
\begin{align*}
\tp(t,x,y) &\le \int_{B(y,|x-y|/2)} \tp(t/2,x,z)p(t/2,z,y)\, dz + M h(t,x)p(t,x,y) \\
&\le \int_{B(y,|x-y|/2)} \!\![\eta^{n} + (1+\nu+\ldots+\nu^{n-1})M h(t/2,x)] p(t/2,x,z)p(t/2,z,y) \,dz \\
&\quad + M h(t,x)p(t,x,y) \\
&\le  [\eta^{n} + (1+\nu+\ldots+\nu^{n-1})M h(t/2,x)] \eta p(t,x,y)  + M h(t,x)p(t,x,y) \\
&\le  [\eta^{n+1} + (\nu+\ldots+\nu^{n})M h(t,x)] p(t,x,y)  + M h(t,x)p(t,x,y) \\
&= [\eta^{n+1} + (1+\nu+\ldots+\nu^{n})M h(t,x)] p(t,x,y),
\end{align*}
and \eqref{e:proof} follows. Since $h(1,x) = |x|^\delta$, by letting $n$ to infinity in \eqref{e:proof}, we get
\begin{align}\label{e:proof1}
\tp(1,x,y) \le \frac{M}{1-\nu} |x|^\delta p(1,x,y), \quad x,y\in
\RR^d.
\end{align}

In the following, we pass to the proof of \eqref{EQ:UB}. By
symmetry, we may and do assume that $|x| \le |y|$. For $x,y\in
\RR^d$ with $|y|\ge |x| \ge 1$, \eqref{EQ:UB} follows by the
estimate $\tp(1,x,y) \le p(1,x,y)$. For $|x|\le |y|\le1$, we use
Corollary \ref{eq:ub1}  and the estimate that $p(1,x,y) \ge c$.
Finally, for $|x| <1 \le |y|$, \eqref{EQ:UB} follows by
\eqref{e:proof1}.
\end{proof}

\subsection{Lower bounds of $\tp(1,x,y)$}

We first begin with the following lemma, which is a consequence of Theorem \ref{thm:UB}.
\begin{lemma} There is a constant $C>0$ such that
\begin{align*}
H(1,x) \ge C(1 \land |x|^\delta), \quad x \in \RR^d.
\end{align*}
\end{lemma}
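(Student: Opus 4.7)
The plan is to exploit the moment identity \eqref{eq:00}, which states $|x|^\delta=\int_{\RR^d}\tp(1,x,y)|y|^\delta\,dy$, together with the sharp upper bound in Theorem \ref{thm:UB}. The identity pins the $|y|^\delta$-weighted mass of $\tp(1,x,\cdot)$ at exactly $|x|^\delta$; if I can show that the tail $\{|y|>R(1\vee|x|)\}$ carries only a controlled fraction of this mass, then the remainder lives on a ball of radius $R(1\vee|x|)$, and bounding $|y|^\delta\le[R(1\vee|x|)]^\delta$ there will force $H(1,x)$ to be at least a constant multiple of $1\wedge|x|^\delta$.

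Concretely, I would fix a parameter $R\ge 2$ to be chosen later, split
\begin{align*}
|x|^\delta=\int_{B(0,R(1\vee|x|))}\tp(1,x,y)|y|^\delta\,dy+\int_{B(0,R(1\vee|x|))^c}\tp(1,x,y)|y|^\delta\,dy,
\end{align*}
and bound the first term trivially by $[R(1\vee|x|)]^\delta H(1,x)$. For the tail, Theorem \ref{thm:UB} and \eqref{eq:pest} apply: on $\{|y|>R(1\vee|x|)\}$ one has $|y|\ge R\ge 2$ (so $1\wedge|y|^\delta=1$) and $|x-y|\ge|y|/2$ (so $p(1,x,y)\le C|y|^{-d-\alpha}$), and a polar-coordinates computation using $\delta<\alpha$ yields
\begin{align*}
\int_{B(0,R(1\vee|x|))^c}\tp(1,x,y)|y|^\delta\,dy\le C_0 R^{\delta-\alpha}(1\wedge|x|^\delta)(1\vee|x|)^{\delta-\alpha}\le C_0R^{\delta-\alpha}|x|^\delta,
\end{align*}
where the last step uses the elementary inequality $(1\wedge|x|^\delta)(1\vee|x|)^{\delta-\alpha}\le|x|^\delta$ which holds for all $x\ne 0$ whenever $\delta<\alpha$.

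Combining the two estimates gives $|x|^\delta\le R^\delta(1\vee|x|)^\delta H(1,x)+C_0R^{\delta-\alpha}|x|^\delta$. Choosing $R$ large enough that $C_0R^{\delta-\alpha}\le 1/2$ (possible since $\delta-\alpha<0$), absorbing the tail term to the left, and rearranging — using $|x|^\delta/(1\vee|x|)^\delta=(1\wedge|x|)^\delta=1\wedge|x|^\delta$ — produces the desired bound $H(1,x)\ge (2R^\delta)^{-1}(1\wedge|x|^\delta)$; the case $x=0$ is trivial. The main delicate point, and really the only one, is that a single choice of $R$ must handle both the regime $|x|\le 1$ and the regime $|x|\ge 1$ simultaneously; this works precisely because the hypothesis $\delta\in(0,\alpha)$ makes both the radial tail integral converge and the inequality $(1\wedge|x|^\delta)(1\vee|x|)^{\delta-\alpha}\le|x|^\delta$ hold uniformly.
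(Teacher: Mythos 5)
Your proof is correct and uses essentially the same approach as the paper: both start from the moment identity \eqref{eq:00}, split the $|y|^\delta$-weighted mass at a radius of size roughly $R(1\vee|x|)$, bound the inner part by $[R(1\vee|x|)]^\delta H(1,x)$, and use the upper bound from Theorem \ref{thm:UB} together with the decay $p(1,x,y)\lesssim|y|^{-d-\alpha}$ on $\{|y|\gtrsim 1\vee|x|\}$ to absorb the tail after choosing $R$ large. The only cosmetic difference is that the paper carries out two cases separately (a fixed radius $r=R$ for $|x|< R/2$, then $r=2|x|+1$ for $|x|\ge R/2$), whereas your single choice $r=R(1\vee|x|)$, together with the observation $(1\wedge|x|^\delta)(1\vee|x|)^{\delta-\alpha}\le|x|^\delta$, handles both regimes in one stroke.
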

\begin{proof}
Let $R>0$ and $x \in B(0,R/2)$. Then
\begin{align*}
\int_{B(0,R)^c} |y|^\delta p(1,x,y) \,dy \le c_1\int_{B(0,R)^c} \frac{|y|^\delta}{|y|^{d+\alpha}}\, dy = c_2 R^{\delta-\alpha} \to 0 \quad \mbox{as } R \to \infty.
\end{align*}
Choose $R\ge 1$ be such that $c_2 C R^{\delta-\alpha} \le 1/2$, where $C$ is the constant given in \eqref{EQ:UB}.  Then, by \eqref{eq:0}, for $r \ge R\ge1$ and $x \in B(0,r/2)$, we have
\begin{equation}\label{e:lower} \begin{split}
\int_{\RR^d} \tp(1,x,y) \,dy &\ge r^{-\delta} \int_{B(0,r)} \tp(1,x,y) |y|^\delta \,dy \\
& = r^{-\delta} \left(|x|^\delta - \int_{B(0,r)^c} \tp(1,x,y) |y|^\delta \,dy \right) \\
& \ge r^{-\delta} \left(|x|^\delta -  C\int_{B(0,r)^c} |x|^\delta p(1,x,y) |y|^\delta \,dy \right)  \ge \frac{|x|^\delta}{2r^\delta}.
\end{split}\end{equation}
Hence, for $x \in B(0,R/2)$, by \eqref{e:lower},
\begin{align*}
\int_{\RR^d} \tp(1,x,y)\, dy \ge \frac{|x|^\delta}{2R^\delta} \ge \frac{|x|^\delta \land  1}{2R^\delta};
\end{align*}
for $x\in B(0,R/2)^c$, taking $r = 2|x|+1$ in \eqref{e:lower}, we can get that \begin{align*}
\int_{\RR^d} \tp(1,x,y) dy \ge \frac{|x|^\delta}{2(2|x|+1)^\delta}\ge  \frac{|x|^\delta}{2(4|x|)^\delta}\ge  \frac{1}{4^{\delta+1}}.
\end{align*} Combining both estimates above, we can prove the desired assertion.
\end{proof}

To obtain lower bounds of $\tp(t,x,y)$, we need to consider the difference between $p(t,x,y)$ and $\tp(t,x,y)$. Motivated by Duhamel's formula \eqref{e:for}, we define
\begin{align*}
p_1(t,x,y) = \int_0^t\int_{\RR^d}p(t-s,x,z) |z|^{-\alpha} p(s,z,y)
\,dz\,ds,\quad t>0,\; x,y\in \RR_0^d.
\end{align*} It is easy to see that $p_1(t,x,y)$ also enjoys the same scaling property as $p(t,x,y)$, i.e.,
\begin{align}\label{eq:scalp1}
    p_1(t,x,y)=t^{-d/\alpha} p_1(1,xt^{-1/\alpha},yt^{-1/\alpha}),\quad t>0,\; x,y\in \RR_0^d.
\end{align}

Let
\begin{align*}
G(t,x) = \int_0^t\int_{\RR^d}p(s,x,z) |z|^{-\alpha}\,dz\,ds, \quad
t>0,\, x \in \RR_0^d.
\end{align*}
By \cite[Lemma 2.3]{2017-KB-TG-TJ-DP},
\begin{align}\label{eq:Gest}
G(t,x)  \approx \log(1 + t|x|^{-\alpha}), \quad t>0, \, x \in
\RR_0^d.
\end{align}
\begin{lemma} \label{lem:p1est}
For all $x,y \in \RR_0^d$, we have
$$p_1(1,x,y) \approx[G(1,x) + G(1,y)]\, p(1,x,y).$$
\end{lemma}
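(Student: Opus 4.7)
The plan is to split the proof into matching upper and lower bounds. For the upper bound, I would invoke the standard 3P (or 3G) inequality for the rotationally symmetric $\alpha$-stable kernel: a short computation from \eqref{eq:pest} produces a constant $C_0>0$ such that
$$p(1-s,x,z)\,p(s,z,y)\;\le\;C_0\,p(1,x,y)\bigl[p(1-s,x,z)+p(s,z,y)\bigr],\qquad s\in(0,1),\ x,y,z\in\RR^d.$$
Inserting this in the definition of $p_1(1,x,y)$, applying Fubini, and identifying the two resulting single integrals with $G(1,x)$ and $G(1,y)$ respectively yields $p_1(1,x,y)\le C[G(1,x)+G(1,y)]\,p(1,x,y)$.

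For the lower bound, I would first observe that $p_1$ is symmetric in its spatial variables: substituting $s\mapsto 1-s$ in its defining integral gives $p_1(1,x,y)=p_1(1,y,x)$. It therefore suffices to prove the one-sided estimate $p_1(1,x,y)\ge c\,G(1,y)\,p(1,x,y)$, as the dual bound with $G(1,x)$ then follows by symmetry and the two combine to give the full lower estimate. To prove the one-sided bound, I would restrict the $s$-integration to $[0,1/2]$, on which \eqref{eq:pest} yields $p(1-s,x,z)\ge c_1\,p(1,x,z)$ uniformly in $s$. Next, restrict the $z$-integration to the set $D_{x,y}:=\{z\in\RR^d:|x-z|\le 2(1\vee|x-y|)\}$; a short case analysis based on whether $|x-y|\le 1$ or $|x-y|>1$ shows, via \eqref{eq:pest}, that $p(1,x,z)\ge c_2\,p(1,x,y)$ for all $z\in D_{x,y}$. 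These two restrictions pull out the factor $p(1,x,y)$ and reduce the problem to showing
$$\int_0^{1/2}\!\int_{D_{x,y}}p(s,z,y)\,|z|^{-\alpha}\,dz\,ds\;\ge\;c_3\,G(1,y).$$

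The main technical task is this last inequality. Using \eqref{eq:Gest} one has $G(1,y)\approx G(1/2,y)\approx\log(1+|y|^{-\alpha})$, so the remaining claim amounts to showing that restricting $z$ to $D_{x,y}$ loses at most a constant factor of the unrestricted integral defining $G(1/2,y)$. For $|y|\gtrsim 1$, the integrand $p(s,z,y)|z|^{-\alpha}$ with $s\in[0,1/2]$ is dominated by the contribution of the unit ball around $y$, which is contained in $D_{x,y}$ because $|x-y|\le 1\vee|x-y|$; this recovers the $|y|^{-\alpha}$-order of $G(1,y)$. For $|y|\ll 1$, the logarithmic contribution to $G(1,y)$ comes from $z$ near the origin, and one verifies that $D_{x,y}$ contains a ball around $0$ of radius of order $1\vee|x-y|$, which is large enough to recover $-\log|y|$ by an explicit computation against \eqref{eq:pest}. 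The main obstacle is precisely this case analysis: the upper bound via 3P is essentially one line, but the lower bound requires a brief but careful splitting to verify that $D_{x,y}$ captures both the polynomial-decay regime of $G(1,y)$ (when $|y|$ is large) and its logarithmic-singularity regime (when $|y|$ is small), across all relative sizes of $|x|,|y|,|x-y|$.
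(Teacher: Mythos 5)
Your upper-bound argument is identical to the paper's: one application of the 3P inequality followed by Fubini, which is the standard and essentially forced proof. Your lower-bound strategy, however, is genuinely different from the paper's and appears to be correct. The paper reduces by symmetry to $0<|x|\le|y|$ (so that $G(1,x)+G(1,y)\approx G(1,x)$) and then runs a three-case analysis: (i) $|y|>4$, $|x|<2$, handled by restricting $z$ to $B(0,3)$ and $s$ to $[0,1/2]$; (ii) $|x|\le1$, $|x|\le|y|\le4$, where a direct argument fails because $y$ may lie near the origin, so the paper first uses the convolution bound $\int p_1(t,x,\cdot)p(r,\cdot,y)\le p_1(t+r,x,y)$ together with the scaling $p_1(1/2^\alpha,x,z)=2^d p_1(1,2x,2z)$ to push $z$ out to $B(0,8)^c$ and reduce to case (i); (iii) $1\le|x|\le|y|$, handled by restricting $z$ to a small ball near $x$. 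You instead prove the one-sided bound $p_1(1,x,y)\ge c\,G(1,y)p(1,x,y)$ uniformly in $x,y$, using a single adapted domain $D_{x,y}=B(x,2(1\vee|x-y|))$ on which $p(1-s,x,z)\gtrsim p(1,x,y)$, and then verify that $D_{x,y}$ always contains $B(y,1)$ (recovering the $|y|^{-\alpha}$ regime of $G$) and, when $|y|$ is small, also a fixed ball around the origin (recovering the $-\log|y|$ regime). Your route avoids the convolution-plus-scaling trick entirely, which is a real simplification; the price is that the final estimate $\int_0^{1/2}\int_{D_{x,y}}p(s,z,y)|z|^{-\alpha}\,dz\,ds\gtrsim G(1,y)$ is where all the case analysis now lives, and writing it out fully still requires verifying the containment $B(0,c)\subset D_{x,y}$ across the various relative positions of $x$ and $y$ (e.g.\ using $2(1\vee|x-y|)-|x|\ge\tfrac12(1\vee|x-y|)$ when $|y|\le\tfrac12$). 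The two proofs are of comparable length once spelled out, but your decomposition is organized by the size of $|y|$ alone rather than by the joint sizes of $|x|$ and $|y|$, which makes the structure cleaner.
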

\begin{proof}
By the 3P inequality (see \cite[(9)]{BJ} or \cite[(2.11)]{CKS2}),
for any $x,y\in \RR_0^d$,
\begin{align*}&\int_0^1\int_{\RR^d} p(1-s,x,z)|z|^{-\alpha}p(s,z,y)\,dz\,ds\\
&\le c_1p(1,x,y) \int_0^1\int_{\RR^d}(p(1-s,x,z)+p(s,z,y)) |z|^{-\alpha}\,dz\,ds \\
& = c_1 (G(1,x) + G(1,y) ) p(1,x,y)\,,
\end{align*}
thus we get the upper bound.

Now, we pass to the lower bound.
Since the function $|x|\mapsto\log(1 + |x|^{-\alpha})$ is decreasing in $|x|$, by \eqref{eq:Gest} and the symmetry of $p(t,x,y)$ and $p_1(t,x,y)$, it suffices to prove
\begin{align*}
p_1(1,x,y) \ge c_2 G(1,x)  p(1,x,y), \quad 0 < |x| \le |y|.
\end{align*}
First, let $|y|>4$  and $|x|<2$. Then, by \eqref{eq:pest},
\begin{align*}
\int_0^{1/2} \int_{B(0,3)} p(s,x,z) \frac{1}{|z|^\alpha}\, dz \, ds &\ge \int_0^{1/2} \int_{B(x,s^{1/\alpha})} p(s,x,z) \frac{1}{|z|^\alpha}\, dz \, ds \\
  &\ge c_0 \ge c_0 \int_0^{1/2} \int_{B(0,3)^c} p(s,x,z) \frac{1}{|z|^\alpha}\,  dz\, ds
\end{align*} and so
$$\int_0^{1/2} \int_{\RR^d} p(s,x,z) \frac{1}{|z|^\alpha}\, dz \, ds \approx \int_0^{1/2} \int_{B(0,3)} p(s,x,z) \frac{1}{|z|^\alpha}\, dz \, ds .$$
Hence,
\begin{equation}\label{eq1:p1est}\begin{split}
p_1(1,x,y) &\ge \int_0^{1/2} \int_{B(0,3)} p(s,x,z) \frac{1}{|z|^\alpha} p(1-s,z,y)\, dz\, ds\\
&\approx \int_0^{1/2} \int_{B(0,3)} p(s,x,z) \frac{1}{|z|^\alpha} p(1,x,y) \,dz \,ds\\
&\approx \int_0^{1/2} \int_{\RR^d} p(s,x,z) \frac{1}{|z|^\alpha} p(1,x,y)\, dz \,ds\\
& = G(1/2,x) p(1,x,y)\approx G(1,x) p(1,x,y),
\end{split}\end{equation} where in the second step we used the fact that
$$p(1-s,z,y)\approx p(1,x,y),\quad 0<s\le 1/2, |x|<2, |z|\le 3, |y|>4.$$
Next, suppose that $|x| \le 1$ and $|x| \le |y| \le 4$. Then, $p(1,x,y) \approx c$.
Note that
\begin{align*}
\int_{\RR^d} p_1(t,x,z) p(r,z,y)\,dz &= \int_{\RR^d} \int_0^t\int_{\RR^d}p(s,x,w) |w|^{-\alpha} p(t-s,w,z) p(r,z,y)  \,dw\,ds\, dz \\
&= \int_0^t\int_{\RR^d}p(s,x,w) |w|^{-\alpha} p(t+r-s,w,y)  \,dw\,ds \\
& \le p_1(t+r,x,y)\,.
\end{align*}
Hence, by the scaling property of $p_1(t,x,y)$ and \eqref{eq1:p1est},
\begin{align*}
p_1(1,x,y) &\ge \int_{B(0,8)^c} p_1(1/2^\alpha,x,z) p(1-1/2^\alpha,z,y)\,dz\\
 &= \int_{B(0,8)^c} 2^d p_1(1,2x,2z) p(1-1/2^\alpha,z,y)\,dz\\
& \ge c_3 G(1,2x) \int_{B(0,8)^c} p(1,2x,2z) p(1-1/2^\alpha,z,y)\,dz\\
&\ge c_4 G(1,x) \ge c_5 G(1,x) p(1,x,y),
\end{align*} where in the third inequality we used the fact that
\begin{align*}\int_{B(0,8)^c} p(1,2x,2z) p(1-1/2^\alpha,z,y)\,dz\ge &c_6\int_{B(0,8)^c} \frac{1}{|z-x|^{d+\alpha} |z-y|^{d+\alpha}}\,dz\\
\ge &c_7\int_{B(0,8)^c}\frac{1}{|z|^{2d+2\alpha}}\,dz \ge c_8.\end{align*}
At last, suppose that $1 \le |x| \le |y|$. Then $G(1,x) \approx |x|^{-\alpha}$. Hence,
\begin{align*}
p_1(1,x,y) &\ge \int_0^{1/2} \int_{B(|x|,1/2)} p(s,x,z) \frac{1}{|z|^\alpha}p(1-s,z,y)\,dz\,ds\\
&\ge c_9\int_0^{1/2} \int_{B(|x|,1/2)} p(s,x,z) \frac{1}{|x|^\alpha}p(1-s,z,y)\,dz \,ds\\
&\ge c_{10} G(1,x) p(1,x,y),
\end{align*} where the last inequality follows from the facts that
$$p(1-s,z,y)\approx p(1,x,y),\quad 0<s<1/2, z\in B(x,1/2), 1 \le |x| \le |y|$$ and
$$\int_0^{1/2} \int_{B(|x|,1/2)} p(s,x,z) \,dz\,ds \ge c_{11}.$$ The proof is complete.
\end{proof}
The following estimate is generally well known (see e.g. \cite[Section 6]{2008-KB-WH-TJ-sm} for further background).
\begin{lemma}\label{lem:tpexp}
For all $t>0$ and $x,y \in \RR^d_0$, we have
\begin{align*}
\tp(t,x,y) \ge p(t,x,y) \exp\left[\kappa\, \frac{p_1(t,x,y)}{p(t,x,y)}\right].
\end{align*}
\end{lemma}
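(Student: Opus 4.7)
My plan is to use the classical Jensen-inequality approach to the Feynman--Kac representation \eqref{e:hk3}, taking advantage of the fact that $q \le 0$ on $\RR^d_0$ so that $\int_0^t q(X_s)\,\d s \in (-\infty,0]$ along every path.

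I would begin by approximating the point mass at $y$. Fix $x,y\in\RR^d_0$ and, for $\epsilon>0$ small enough that $0\notin B(y,\epsilon)$, set $f_\epsilon:=|B(y,\epsilon)|^{-1}\I_{B(y,\epsilon)}$ and $Z_\epsilon:=\Ee^x f_\epsilon(X_t)$. The Feynman--Kac formula \eqref{e:hk3} then reads
$$\int\tp(t,x,w)f_\epsilon(w)\,\d w=Z_\epsilon\cdot\Ee^x_\epsilon\left[\exp\!\left(\int_0^t q(X_s)\,\d s\right)\right],$$
where $\Ee^x_\epsilon$ is expectation under the tilted measure $\d\Pp^x_\epsilon:=Z_\epsilon^{-1}f_\epsilon(X_t)\,\d\Pp^x$. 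By convexity of $\exp$, Jensen's inequality under $\Pp^x_\epsilon$ yields
$$\Ee^x_\epsilon\left[\exp\!\left(\int_0^t q(X_s)\,\d s\right)\right]\ge\exp\!\left(\Ee^x_\epsilon\int_0^t q(X_s)\,\d s\right).$$

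Next, by Tonelli and the Markov property, I would rewrite the exponent as
$$Z_\epsilon\cdot\Ee^x_\epsilon\int_0^t q(X_s)\,\d s=\int_0^t\int_{\RR^d}\int_{\RR^d}p(s,x,z)\,q(z)\,p(t-s,z,w)\,f_\epsilon(w)\,\d w\,\d z\,\d s.$$
The integrand has a fixed sign (being $\le 0$), and as $\epsilon\downarrow 0$ dominated/monotone convergence, together with continuity of $p(t-s,z,\cdot)$ at $y$, gives convergence to $\kappa\,p_1(t,x,y)$, which by Lemma~\ref{lem:p1est} is finite precisely because $x,y\neq 0$.

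To conclude, combining the two displays, dividing by $Z_\epsilon$, and letting $\epsilon\downarrow 0$ gives, for Lebesgue-almost every $y\in\RR^d_0$,
$$\tp(t,x,y)\ge p(t,x,y)\exp\!\left(\kappa\,\frac{p_1(t,x,y)}{p(t,x,y)}\right),$$
using Lebesgue's differentiation theorem on the left and $Z_\epsilon\to p(t,x,y)$ on the right. Pointwise extension to every $y\in\RR^d_0$ then follows from continuity of the right-hand side together with continuity of $\tp(t,x,\cdot)$ on $\RR^d_0$, which is inherited from Duhamel's formula \eqref{e:for}. The principal technical obstacle is precisely the finiteness and limit passage in the second step: since $-q\notin\mathcal K_\alpha$, standard Kato-class arguments do not apply, and one has to lean on Lemma~\ref{lem:p1est} to ensure that the bridge expectation $\Ee^x_\epsilon\int_0^t q(X_s)\,\d s$ stays bounded below as $\epsilon\to 0$.
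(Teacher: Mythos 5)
Your argument is correct but follows a genuinely different route from the paper's. You invoke the Feynman--Kac representation \eqref{e:hk3}, tilt to a bridge-type measure, and apply Jensen's inequality to the convex function $\exp$; the exponent is then identified with $\kappa\,p_1(t,x,y)/p(t,x,y)$ by a Fubini--Markov computation and a mollification limit. The paper instead works entirely in the appendix with the one-parameter family $p^\lambda$ associated with $\Delta^{\alpha/2}-\lambda|x|^{-\alpha}$: it establishes the binomial identity \eqref{eq:pkv_rel} for the perturbation-series terms $p^\lambda_n$, deduces via \eqref{eq:deriv} that $\lambda\mapsto p^\lambda(t,x,y)$ is completely monotone, invokes Bernstein's theorem to get a representing measure, applies Cauchy--Schwarz to that measure (Lemma~\ref{lem:5}) to show $\lambda\mapsto p_1^\lambda/p^\lambda$ is decreasing (Lemma~\ref{lem:decr}), and finally integrates $(\log h)'$ from $0$ to $-\kappa$. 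Your approach is shorter and more probabilistically transparent, and it is, in fact, the "generally well known" argument the paper alludes to; the paper's algebraic route buys additional structural information (complete monotonicity of $\lambda\mapsto p^\lambda$ and monotonicity of $\lambda\mapsto p_1^\lambda/p^\lambda$, see Theorem~\ref{thm:wlb}), at the cost of the full perturbation-series apparatus.

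Two technical points in your write-up deserve explicit treatment rather than a passing mention. First, \eqref{e:hk3} is stated for $f\in C_\infty(\RR^d_0)$, not for indicators; you should note that the identity extends to bounded Borel $f$ supported away from $0$ by a monotone-class argument, using positivity of both sides. Second, to pass from "for a.e.\ $y$'' to "for every $y\in\RR^d_0$'' you assert continuity of the right-hand side, which requires continuity of $y\mapsto p_1(t,x,y)$ on $\RR^d_0$; Lemma~\ref{lem:p1est} gives only two-sided comparability with a continuous function, not continuity itself. The continuity does hold and can be proved by the same time-splitting device as in Lemma~\ref{L:con1}, using \eqref{eq:Gest} to control the contributions near $s=0$ and $s=t$, but it should be spelled out. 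Alternatively, once you have continuity of $p_1(t,x,\cdot)$ you can avoid Lebesgue differentiation entirely, since $\tp(t,x,\cdot)$ is already known to be continuous on $\RR^d_0$ (Lemma~\ref{L:con1}), so $\int\tp(t,x,w)f_\epsilon(w)\,\d w\to\tp(t,x,y)$ holds for every $y\neq 0$ directly.
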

\begin{proof} Since the proof is a little long, we will postpone it to the appendix.
\end{proof}

We note that the estimate in Lemma \ref{lem:tpexp} is not sharp.
More precisely, one may show that $\lim\limits_{s \to 0^+}
\frac{p_1(1,sx,y)}{-\ln (s|x|) p(1,sx,y)} =
\frac{\Gamma(\frac{d-\alpha}{2})}{2^{\alpha-1}\Gamma(\frac{\alpha}{2})\Gamma(\frac{d}{2})}$.
Hence, by \eqref{EQ:main}, for fixed  $y \not=0$,  $\tp(1,x,y)
e^{-\kappa\, \frac{p_1(1,x,y)}{p(1,x,y)}} \to \infty$ as $x \to 0$.
However, we still can get the following useful estimate.

\begin{corollary}\label{cor:lowerb}
There are constants $c,\gamma>0$  such that for all $t>0$ and $x,y \in \RR^d_0$, we have
\begin{align}\label{eq:lowerb}
\tp(t,x,y) \ge  c \left[1 \land (t^{-1/\alpha}|x|)^\gamma\right]\left[1 \land (t^{-1/\alpha}|y|)^\gamma\right] p(t,x,y)
\end{align}
\end{corollary}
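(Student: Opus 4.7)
The plan is to combine Lemma \ref{lem:tpexp}, Lemma \ref{lem:p1est}, and the logarithmic bound \eqref{eq:Gest} for $G(t,x)$, and then exploit the joint scaling of $\tp$, $p$, and $p_1$ to pass from $t=1$ to arbitrary $t>0$.

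First I would reduce to $t=1$. By Lemma \ref{L:sca}, the scaling of $p$, and \eqref{eq:scalp1}, the ratios $\tp(t,x,y)/p(t,x,y)$ and $p_1(t,x,y)/p(t,x,y)$ are both invariant under the substitution $(x,y) \mapsto (t^{-1/\alpha}x, t^{-1/\alpha}y)$. Therefore it suffices to establish
\begin{equation*}
\frac{\tp(1,x,y)}{p(1,x,y)} \ge c\, (1\land |x|^\gamma)(1\land |y|^\gamma), \qquad x,y\in \RdO,
\end{equation*}
after which \eqref{eq:lowerb} follows by replacing $x,y$ with $t^{-1/\alpha}x, t^{-1/\alpha}y$.

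Next, Lemma \ref{lem:tpexp} gives $\tp(1,x,y) \ge p(1,x,y)\exp[\kappa\, p_1(1,x,y)/p(1,x,y)]$, while Lemma \ref{lem:p1est} combined with \eqref{eq:Gest} produces a constant $C_1>0$ with
\begin{equation*}
\frac{p_1(1,x,y)}{p(1,x,y)} \le C_1\bigl[\log(1+|x|^{-\alpha}) + \log(1+|y|^{-\alpha})\bigr].
\end{equation*}
Since $\kappa=\kappa_\delta<0$, taking exponentials and using the elementary inequality $1+|z|^{-\alpha} \le 2\max(1,|z|^{-\alpha})$ yields
\begin{equation*}
\frac{\tp(1,x,y)}{p(1,x,y)} \ge (1+|x|^{-\alpha})^{-|\kappa|C_1}(1+|y|^{-\alpha})^{-|\kappa|C_1} \ge 4^{-|\kappa|C_1}\bigl(1\land |x|^{\alpha|\kappa|C_1}\bigr)\bigl(1\land |y|^{\alpha|\kappa|C_1}\bigr).
\end{equation*}
Setting $\gamma = \alpha|\kappa|C_1$ and $c = 4^{-|\kappa|C_1}$ completes the proof.

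I do not anticipate a genuine obstacle: all the heavy lifting (the Khasminskii-type exponential lower bound in Lemma \ref{lem:tpexp} and the 3P-based comparison in Lemma \ref{lem:p1est}) is already in place. The only thing to watch carefully is the bookkeeping of the negative constant $\kappa_\delta$ and the conversion of the logarithmic control into the required power-type factor $1\land |x|^\gamma$; note that $\gamma$ here may be much larger than $\delta$, which is consistent with the estimate being non-sharp as flagged in the remark preceding the corollary.
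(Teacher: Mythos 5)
Your proposal is correct and follows essentially the same route as the paper's own proof: apply Lemma~\ref{lem:tpexp}, bound $p_1/p$ via Lemma~\ref{lem:p1est} and \eqref{eq:Gest}, and convert the resulting logarithmic exponent into the power factor $1\land|x|^\gamma$. The only cosmetic difference is that you make the reduction to $t=1$ via scaling explicit, whereas the paper carries $t$ through the chain of inequalities directly (which is justified by the same scaling identities \eqref{eq:scalp1} and Lemma~\ref{L:sca}).
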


\begin{proof}
Lemmas \ref{lem:tpexp} and  \ref{lem:p1est} along with \eqref{eq:Gest} yield that for any $x,y\in \RR_0^d$,
\begin{align*}
\tp(t,x,y) &\ge p(t,x,y) \exp\left[\kappa\, \frac{p_1(t,x,y)}{p(t,x,y)}\right]\\
&\ge p(t,x,y) \exp\left[-c (G(t,x) + G(t,y))\right]\\
& \ge p(t,x,y) \exp\left[- C (\log(1 + t|x|^{-\alpha}) + \log(1 + t|y|^{-\alpha})) \right]\\
& = p(t,x,y) \left[(1 + t|x|^{-\alpha})^{-C}\right] \left[(1 + t|y|^{-\alpha})^{-C} \right]\\
& \ge p(t,x,y) \left[2^{-C}(1 \vee t|x|^{-\alpha})^{-C}\right] \left[2^{-C}(1 \vee t|y|^{-\alpha})^{-C} \right],
\end{align*}
thus we get \eqref{eq:lowerb} with $\gamma = \alpha C$ and $c = 4^{-C}$.
\end{proof}

\begin{lemma}\label{lem:lowerCase1}
For any $r>0$, there is a constant $C_r>0$ such that for all $x,y\in \RR^d$ with $|x|\land |y|\ge r$,
\begin{align*}
\tp(1,x,y) \ge C_r p(1,x,y).
\end{align*}
\end{lemma}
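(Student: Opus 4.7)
The plan is simply to read off this lemma from Corollary \ref{cor:lowerb}, which already does essentially all the work. The only thing to notice is that the ``weights'' $1\wedge(t^{-1/\alpha}|x|)^\gamma$ and $1\wedge(t^{-1/\alpha}|y|)^\gamma$ appearing in \eqref{eq:lowerb} are bounded away from zero as soon as $|x|$ and $|y|$ are bounded away from zero, so specializing to $t=1$ and $|x|\wedge|y|\ge r$ produces a constant depending only on $r$.

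More concretely, I would proceed as follows. First, observe that the hypothesis $|x|\wedge|y|\ge r>0$ forces $x,y\in\RR^d_0$, so Corollary \ref{cor:lowerb} is applicable; taking $t=1$ in \eqref{eq:lowerb} gives
\begin{equation*}
\tp(1,x,y)\ge c\bigl[1\wedge|x|^\gamma\bigr]\bigl[1\wedge|y|^\gamma\bigr]p(1,x,y).
\end{equation*}
Second, use the monotonicity of $s\mapsto 1\wedge s^\gamma$ (for $s\ge 0$) together with $|x|,|y|\ge r$ to conclude that $1\wedge|x|^\gamma\ge 1\wedge r^\gamma$ and $1\wedge|y|^\gamma\ge 1\wedge r^\gamma$. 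Setting
\begin{equation*}
C_r:=c\bigl(1\wedge r^\gamma\bigr)^2
\end{equation*}
(with $c$ and $\gamma$ as in Corollary \ref{cor:lowerb}) then yields $\tp(1,x,y)\ge C_r\, p(1,x,y)$, as required.

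There is no substantive obstacle to overcome: the whole content of the lemma is that the lower bound supplied by Corollary \ref{cor:lowerb} is uniform once both arguments are separated from the singularity at the origin. The interest of stating this separately is presumably forward-looking: in the proof of the general lower bound (Theorem \ref{Thm:LB}) this ``bulk'' estimate will be combined, via the Chapman--Kolmogorov equation \eqref{eq:CKpt}, with suitable estimates near the origin to handle the remaining regimes $|x|<r$ or $|y|<r$ and to recover the full weight $(1\wedge|x|)^\delta(1\wedge|y|)^\delta$.
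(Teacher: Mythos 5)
Your proof is correct and matches the paper's argument exactly: both simply specialize Corollary \ref{cor:lowerb} to $t=1$ and bound the weight factors from below by $1\wedge r^\gamma$ using $|x|,|y|\ge r$ (the paper writes the resulting constant as $c(1\wedge r)^{2\gamma}$, which equals your $c(1\wedge r^\gamma)^2$). Nothing further to add.
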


\begin{proof}
For $r>0$ and $x,y\in \RR^d$ with $|x| \land |y| \ge r$, by \eqref{eq:lowerb}, we get
\begin{align*}
\tp(1,x,y) \ge c (1 \land r)^{2\gamma} p(1,x,y),
\end{align*}
where $c$ and $\gamma$ are the constants from Corollary \ref{cor:lowerb}.
\end{proof}

\begin{lemma}\label{lem:lowerCase2}
For any $R>0$, there is a constant $C_R>0$ such that for any $x,y\in
\RR^d$ with $|x| \vee |y| \le R$,
$$
    \tp(1,x,y)\ge C_R |x|^\delta |y|^\delta.
$$
\end{lemma}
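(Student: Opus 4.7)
The plan is to exploit the Chapman--Kolmogorov equation \eqref{eq:CKpt} in three time-steps in order to reduce the estimate to Lemma \ref{lem:lowerCase1}, which is effective only when both arguments are bounded away from the origin. Fix an annulus $A=\{u\in\RR^d: r_0\le|u|\le R_0\}$, with $r_0,R_0$ to be chosen in terms of $R$, and write
\begin{align*}
\tp(1,x,y) \ge \int_A\!\int_A \tp(1/3,x,z)\,\tp(1/3,z,w)\,\tp(1/3,w,y)\,dz\,dw.
\end{align*}
Because $A\times A$ is a compact subset of $\RR^d_0\times \RR^d_0$, Lemma \ref{lem:lowerCase1} rescaled via Lemma \ref{L:sca}, together with the uniform positive lower bound for $p(1/3,z,w)$ on $A\times A$, yields $\tp(1/3,z,w)\ge c(r_0,R_0)>0$ on $A\times A$. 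Thus the problem reduces to proving the mass estimate
\begin{align*}
\int_A \tp(1/3,x,z)\,dz \;\ge\; c\,|x|^\delta, \qquad |x|\le R,
\end{align*}
since the analogous bound with $|y|^\delta$ follows by symmetry of $\tp$.

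For this mass estimate I would start from the conservation identity \eqref{eq:00}, namely $\int_{\RR^d}\tp(1/3,x,z)|z|^\delta\,dz = |x|^\delta$, and show that the contributions of the two tails $\{|z|<r_0\}$ and $\{|z|>R_0\}$ can each be bounded by $|x|^\delta/4$ for $r_0$ small and $R_0$ large. For the inner tail I would apply Proposition \ref{lem:mass_est} after rescaling via \eqref{eq:H_1}:
\begin{align*}
\int_{|z|<r_0}\tp(1/3,x,z)\,|z|^\delta\,dz \;\le\; r_0^\delta\, H(1/3,x) \;\le\; c_1 r_0^\delta |x|^\delta.
\end{align*}
For the outer tail I would invoke the upper bound Theorem \ref{thm:UB}, rescaled to time $1/3$, which yields $\tp(1/3,x,z)\le c_2 |x|^\delta p(1/3,x,z)$ whenever $|x|\le R$ and $|z|\ge R_0\vee 1$; combined with the elementary bound $p(1/3,x,z)\le c\,|z|^{-d-\alpha}$ for $|z|\ge R_0\ge 2R$ together with $\delta<\alpha$, this gives
\begin{align*}
\int_{|z|>R_0}\tp(1/3,x,z)\,|z|^\delta\,dz \;\le\; c_3 |x|^\delta R_0^{\delta-\alpha}.
\end{align*}
Choosing $r_0$ small and $R_0$ large so that each tail is at most $|x|^\delta/4$, the middle piece $\int_A\tp(1/3,x,z)|z|^\delta\,dz$ is at least $|x|^\delta/2$, and dividing by $R_0^\delta=\sup_{z\in A}|z|^\delta$ yields $\int_A\tp(1/3,x,z)\,dz\ge |x|^\delta/(2R_0^\delta)$.

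Combining the three ingredients,
\begin{align*}
\tp(1,x,y) \;\ge\; c(r_0,R_0)\cdot \frac{|x|^\delta}{2R_0^\delta}\cdot \frac{|y|^\delta}{2R_0^\delta} \;=\; C_R|x|^\delta|y|^\delta,
\end{align*}
as required. The main obstacle is essentially bookkeeping: one has to verify that the constants $c_1,c_2,c_3$ coming from Proposition \ref{lem:mass_est} and Theorem \ref{thm:UB} are absolute, so that the choice of $r_0,R_0$ (and hence $C_R$) depends only on $R$, and to carefully track how the rescaling in Lemma \ref{L:sca} converts the time-$1$ statements to their time-$1/3$ analogues without introducing hidden dependence on $x$ or $y$.
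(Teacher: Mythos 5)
Your proof is correct and follows essentially the same strategy as the paper's: split via Chapman--Kolmogorov over an annulus, get a lower bound on the mass there from the conservation identity \eqref{eq:00} combined with the upper bounds of Proposition \ref{lem:mass_est} and Theorem \ref{thm:UB}, and use Lemma \ref{lem:lowerCase1} for the middle kernel factor. The only cosmetic difference is that the paper composes three time-$1$ slices to estimate $\tp(3,x,y)$ and rescales at the end, whereas you compose three time-$1/3$ slices directly at time $1$ and rescale each ingredient instead.
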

\begin{proof}
By \eqref{e:lower}, there exists a constant $R_0\ge 2 \cdot 3^{1/\alpha}$ large enough such that for all $x\in B(0,R_0/2)$,
$$
\int_{B(0,R_0)} \tp(1,x,y)\,dy\ge R_0^{-\delta}\int_{B(0,R_0)}\tp(1,x,y) |y|^\delta\,dy \ge \frac{|x|^\delta}{2 R_0^\delta}.
$$
On the other hand, by \eqref{EQ:UB}, for all $r_0>0$ and $x\in \RR^d$,
$$
\int_{B(0,r_0)}\tp (1,x,y)\,dy\le C_1 |x|^\delta \int_{B(0,r_0)}p(1,x,y)|y|^\delta\,dy\le C_1|x|^\delta r_0^\delta.
$$
We take $r_0=\frac{1}{(4C_1)^{1/\delta} R_0}$. For $0<a<b<\infty$, let $D(a,b) = B(0,b) \setminus B(0,a)$. Then, for $|x| < R_0/2$,
\begin{align}\label{eq:lowerCase2}
\int_{D(r_0,R_0)} \tp(1,x,z) dz \ge |x|^\delta\left(\frac{1}{2R_0^\delta} - C_1 r_0^\delta\right) = \frac{|x|^{\delta}}{4R_0^\delta}.
\end{align}
Therefore, by \eqref{eq:CKpt} and \eqref{eq:lowerCase2}, for all $x,y\in \RR^d$ with
$|x|\vee |y|\le R_0/2$,
\begin{align*}\tp(3,x,y)&\ge \int_{D(r_0,R_0)}\int_{D(r_0,R_0)}\tp (1, x,z) \tp(1,z,w)\tp(1,w,y)\,dz\,dw\\
&\ge  \frac{|x|^\delta |y|^\delta}{16 R_0^{2\delta}}  \inf_{z,w\in D(r_0, R_0)}\tp(1,z,w).
\end{align*}
Next, by Lemma \ref{lem:lowerCase1},
\begin{equation}\label{e:rem}
\inf_{z,w\in D(r_0,R_0)}\tp(1,z,w) \ge C_{r_0}\inf_{z,w\in D(r_0,R_0)} p(1,z,w) \ge  \frac{c C_{r_0}}{(2R_0)^{d+\alpha}} >0,
\end{equation} where $C_{r_0}>0$ is a constant given in Lemma \ref{lem:lowerCase1}.
Hence, $$\tp(3,x,y) \ge c_0 |x|^\delta |y|^\delta,\quad |x| \vee |y| < R_0/2.$$
Now, by the scaling property of $\tp$, we obtain
\begin{align*}
\tp(1,x,y) = 3^{-d/\alpha} \tp(3,3^{-1/\alpha}x, 3^{-1/\alpha}y) \ge c_0 3^{-(d+2\delta)/\alpha} |x|^\delta |y|^\delta, \quad |x| \vee |y| \le \frac{R_0}{2 \cdot 3^{1/\alpha}}.
\end{align*} This completes the proof.
\end{proof}
\begin{remark}Instead of applying Lemma \ref{lem:lowerCase1}, we can make use of the Feynman-Kac formula \eqref{e:hk2} for the semigroup $(\tilde P_t)_{t\ge0}$ and Dirichlet heat kernel estimates for fractional Laplacian obtained in \cite{CKS} to achieve \eqref{e:rem}. \end{remark}

\begin{theorem}\label{Thm:LB}{\bf (Lower bounds)}\,\, There is a constant $C>0$ such that for all $x,y \in \RR^d$,
\begin{align*}
\tp(1,x,y) \ge C(1\land |x|^\delta)(1\land |y|^\delta) p(1,x,y).
\end{align*}
\end{theorem}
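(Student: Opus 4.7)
The plan is to split the problem into three regimes according to the sizes of $|x|$ and $|y|$, and in each regime to combine Lemmas \ref{lem:lowerCase1} and \ref{lem:lowerCase2} with the scaling property (Lemma \ref{L:sca}) and the Chapman-Kolmogorov equation \eqref{eq:CKpt}. By the symmetry $\tp(1,x,y)=\tp(1,y,x)$, I may assume $|x|\le|y|$ throughout.

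In the ``both large'' regime $|x|\ge 1$, one has $|x|\land|y|\ge 1$, so Lemma \ref{lem:lowerCase1} with $r=1$ gives $\tp(1,x,y)\ge c\,p(1,x,y)$, which is the desired inequality because $(1\land|x|^\delta)(1\land|y|^\delta)=1$. In the ``both small'' regime $|y|\le 2$, Lemma \ref{lem:lowerCase2} with $R=2$ gives $\tp(1,x,y)\ge c\,|x|^\delta|y|^\delta$; since $p(1,x,y)$ is bounded above by $p(1,0,0)=:M$ and $(1\land|x|^\delta)(1\land|y|^\delta)\le|x|^\delta|y|^\delta$, this implies the target bound with constant $c/M$.

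The main obstacle is the remaining ``mixed'' regime $|x|<1<2<|y|$, where I plan to apply Chapman-Kolmogorov with an intermediate annulus $A:=\{z\in\RR^d:1\le|z|\le 2\}$:
\[\tp(1,x,y)\ge \int_A \tp(1/2,x,z)\,\tp(1/2,z,y)\,dz,\]
and estimate the two factors separately. For the first factor, the pair $(x,z)$ with $z\in A$ satisfies $|x|\vee|z|\le 2$, so after rescaling $\tp(1/2,x,z)=2^{d/\alpha}\tp(1,2^{1/\alpha}x,2^{1/\alpha}z)$ Lemma \ref{lem:lowerCase2} (with $R=2^{1+1/\alpha}$) yields $\tp(1/2,x,z)\ge c_1|x|^\delta|z|^\delta\ge c_1|x|^\delta$. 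For the second factor, the pair $(z,y)$ satisfies $|z|\land|y|\ge 1$, so the same rescaling together with Lemma \ref{lem:lowerCase1} (with $r=2^{1/\alpha}$) gives $\tp(1/2,z,y)\ge c_2\,p(1/2,z,y)$, the factors $2^{d/\alpha}$ cancelling on both sides.

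The last ingredient is the routine comparison $\int_A p(1/2,z,y)\,dz\ge c_3\,p(1,x,y)$, which follows from \eqref{eq:pest}: for $z\in A$, $|x|\le 1$, $|y|\ge 2$, the triangle inequality gives $|x-y|\approx|y|\approx|z-y|$, hence $p(1/2,z,y)\approx p(1,x,y)$ uniformly in $z\in A$, and $A$ has fixed positive Lebesgue measure. Multiplying the three pieces produces
\[\tp(1,x,y)\ge c_4\,|x|^\delta\,p(1,x,y)=c_4(1\land|x|^\delta)(1\land|y|^\delta)\,p(1,x,y)\]
(using $(1\land|y|^\delta)=1$ since $|y|\ge 2$) in the mixed regime, which, combined with the two earlier regimes, completes the proof.
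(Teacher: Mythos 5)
Your proposal is correct and follows essentially the same route as the paper's proof: split into the regimes ``both large,'' ``both small,'' and ``mixed,'' handle the first two directly via Lemmas \ref{lem:lowerCase1} and \ref{lem:lowerCase2}, and treat the mixed regime by Chapman--Kolmogorov over a fixed annulus, with Lemma \ref{lem:lowerCase2} plus scaling controlling the factor containing the small point and Lemma \ref{lem:lowerCase1} controlling the other (the paper works at time $2$ and rescales; you split $1=\tfrac12+\tfrac12$, which is a cosmetic difference). One small inaccuracy in the last step: when $|y|$ is only slightly larger than $2$ and $z\in A$ is close to $y$, the quantity $|z-y|$ can be arbitrarily small, so the asserted two-sided comparison $p(1/2,z,y)\approx p(1,x,y)$ uniformly in $z\in A$ fails. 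What your argument actually needs (and what does hold, since $|z-y|\le|y|+2\le C|x-y|$ for $|x|\le 1\le 2<|y|$ and $z\in A$) is only the one-sided bound $p(1/2,z,y)\ge c\,p(1,x,y)$, so the conclusion is unaffected.
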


\begin{proof}
By symmetry, we will consider only $|x| \le |y|$. For $|w_1| \le 1/4$, $|w_2| > 1$ and  $1/4 \le |z| \le 1/2$, by Lemmas \ref{lem:lowerCase2} and \ref{lem:lowerCase1}, we have
$$
\tp(1,w_1,z) \ge c_1 |w_1|^\delta $$ and
$$\tp(1,w_2,z) \ge c_2 p(1,w_2,z)\approx p(2,w_2,z)\approx p(2,w_2,w_1).$$
Hence, for any $|w_1|\le 1/4$ and $|w_2|>1$,
\begin{equation}\label{EQ:lower1}\begin{split}
\tp(2,w_1,w_2) &\ge \int_{B(0,1/2) \setminus B(0,1/4)} \tp(1,w_1,z) \tp(1,z,w_2) \,dz \ge c |w_1|^\delta p(2,w_1,w_2).
\end{split}\end{equation}
Therefore, for $|x| \le 2^{-1/\alpha}/4$ and $|y| > 2^{-1/\alpha}$, by \eqref{EQ:lower1},
\begin{align*}
\tp(1,x,y) = &2^{d/\alpha} \tp(2,2^{1/\alpha}x, 2^{1/\alpha}y) \ge c
2^{d/\alpha} |2^{1/\alpha}x|^\delta
p(2,2^{1/\alpha}x,2^{1/\alpha}y)\\ = & c 2^{\delta/\alpha}
|x|^\delta p(1,x,y).
\end{align*}
Next, for $|x| \land |y| \ge 2^{-1/\alpha}/4$, we use Lemma \ref{lem:lowerCase1}. Finally, for $|x| \vee |y| \le 2^{-1/\alpha}$, we apply Lemma \ref{lem:lowerCase2}.
\end{proof}

Two-sided estimates for $\tp(t,x,y)$ stated in Theorem \ref{th:main} is a direct consequence of the scaling property of $\tp(t,x,y)$ and Theorems \ref{thm:UB} and \ref{Thm:LB}.

\subsection{Joint continuity of $\tp(t,x,y)$} To prove the joint continuity of $\tp(t,x,y)$, we just follow the same argument of \cite[Subsection 4.3]{2016-KB-BD-PK-pa}. For the sake of completeness, we present the proof here.
\begin{lemma}\label{L:con1}For any fixed $x\in \RR_0^d$, the function $\RR^d_0\ni y \mapsto \tp(t,x,y)$ is continuous. \end{lemma}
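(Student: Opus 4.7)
The natural tool is Duhamel's formula \eqref{e:for} in its second form,
\begin{equation*}
\tp(t,x,y)=p(t,x,y)+\kappa\int_0^t\int_{\RR^d}\tp(t-s,x,z)|z|^{-\alpha}p(s,z,y)\,dz\,ds,
\end{equation*}
which concentrates the $y$-dependence in $p(t,x,y)$ and in the factor $p(s,z,y)$ under the integral sign. Fix $t>0$ and $x\in\RR_0^d$, and let $y_n\to y_0\in\RR_0^d$; we may assume $|y_n-y_0|\le|y_0|/4$. Since the stable heat kernel $p$ is jointly continuous, $p(t,x,y_n)\to p(t,x,y_0)$, so the whole matter reduces to passing the limit $y_n\to y_0$ inside the integral.

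The integrand $f_n(s,z):=\tp(t-s,x,z)|z|^{-\alpha}p(s,z,y_n)$ converges pointwise to its natural limit by continuity of $p$. To transfer this to convergence of the integrals I would invoke the generalized dominated convergence theorem (Pratt's lemma): if $0\le f_n\le g_n$ a.e., $f_n\to f$ and $g_n\to g$ a.e., and $\int g_n\to\int g<\infty$, then $\int f_n\to\int f$. Combining $\tp\le p$ with the 3P-type inequality used in the proof of Lemma~\ref{lem:p1est} produces the pointwise bound
\begin{equation*}
f_n(s,z)\le g_n(s,z):=c\,p(t,x,y_n)\,|z|^{-\alpha}\bigl[p(t-s,x,z)+p(s,z,y_n)\bigr],
\end{equation*}
and, by the definition of $G$,
\begin{equation*}
\iint g_n\,dz\,ds=c\,p(t,x,y_n)\bigl[G(t,x)+G(t,y_n)\bigr]\longrightarrow c\,p(t,x,y_0)\bigl[G(t,x)+G(t,y_0)\bigr]=\iint g_\infty,
\end{equation*}
the limit being finite by \eqref{eq:Gest} since $x,y_0\ne 0$. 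Granting continuity of $G(t,\cdot)$ at $y_0$, Pratt's lemma then yields the desired convergence of the integral, and Duhamel's formula gives $\tp(t,x,y_n)\to\tp(t,x,y_0)$.

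The only remaining ingredient is continuity of $G(t,\cdot)$ on $\RR_0^d$, which follows by exactly the same recipe: writing $G(t,y)=\int_0^t(P_sv)(y)\,ds$ with $v(z)=|z|^{-\alpha}$, each $(P_sv)(\cdot)$ is continuous at $y_0\ne 0$ via an $L^1$--$L^\infty$ splitting of $v$ at some radius together with dominated convergence on each piece, and the bound $(P_sv)(y)\le c(|y_0|^\alpha+s)^{-1}$ (obtained as in the derivation of \eqref{eq:Gest}) supplies a uniform-in-$y$ dominator on a small neighbourhood of $y_0$ that is integrable on $(0,t)$. The principal obstacle throughout is the pointwise domination of $p(s,z,y_n)$: the naive bound $p(s,z,y_n)\le cs^{-d/\alpha}$ produces the divergent factor $\int_0^t s^{-d/\alpha}\,ds=\infty$ (since $\alpha<d$), ruling out an ordinary DCT with a single $y$-independent dominator. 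The 3P inequality trades this for an $n$-dependent dominator whose total mass nevertheless converges, which is precisely the scenario handled by Pratt's lemma.
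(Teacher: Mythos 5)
Your proof is correct, but it takes a genuinely different route from the paper's. Both proofs start from the second Duhamel formula and both ultimately rest on the same ingredients (the 3P inequality, $\tp\le p$, and the estimate \eqref{eq:Gest} for $G$). The paper's argument is a time-split: it isolates the interval $s\in(0,\varepsilon)$, bounds its contribution by $c\varepsilon|y|^{-\alpha}$ using $p(1-s,x,w)\le c$ and $G(\varepsilon,y)\lesssim\varepsilon|y|^{-\alpha}$, and then on $s\in[\varepsilon,1]$ applies ordinary dominated convergence with a fixed dominator, since there $p(s,w,\cdot)$ varies by a bounded factor near $y_0$; there is no need to know that $G(t,\cdot)$ is itself continuous. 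Your argument instead applies Pratt's generalized dominated convergence theorem over the whole strip $(0,t)\times\RR^d$ at once, choosing the $n$-dependent dominator $g_n=cp(t,x,y_n)|z|^{-\alpha}[p(t-s,x,z)+p(s,z,y_n)]$ supplied by 3P, and checking that $\iint g_n\to\iint g_\infty$ through the explicit formula $\iint g_n=cp(t,x,y_n)[G(t,x)+G(t,y_n)]$. This avoids the $\varepsilon$-split and is arguably cleaner, but it forces you to establish an extra fact the paper sidesteps, namely the continuity of $y\mapsto G(t,y)$ on $\RR_0^d$; your sketch of that (splitting $|z|^{-\alpha}$ into $L^1+L^\infty$ and using a uniform integrable dominator like $(|y_0|^\alpha+s)^{-1}$ or, more simply, $c(|y_0|^{-\alpha}+s|y_0|^{-2\alpha})$ for $y$ in a small neighbourhood of $y_0$) is sound and can be made precise, but it is an additional step, and you should note that \eqref{eq:Gest} alone only gives comparability, not continuity, so the separate argument is really needed. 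Net: your approach trades the paper's explicit small-time estimate for an auxiliary continuity lemma about $G$; both are fine, with the paper's split being slightly more self-contained and yours being conceptually tidier once the $G$-continuity is in hand.
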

\begin{proof} Fix $x,y,z\in \RR_0^d$ with $z\to y$. Then, by \eqref{e:for},
\begin{align*}\tp (1,x,y)-\tp (1,x,z)=&p(1,x,y)-p(1,x,z)\\
&+\int_0^1\int_{\RR^d} \tp(1-s,x,w)q(w)(p(s,w,y)-p(s,w,z))\,dw\,ds.
\end{align*}

For any $\varepsilon>0$ small enough, by \eqref{eq:Gest},
\begin{equation}\label{e:con1}\begin{split}&-\int_0^\varepsilon \int_{\RR^d} \tp(1-s,x,w)q(w)p(s,w,y)\,dw\,ds\\
&\le -\int_0^\varepsilon \int_{\RR^d} p(1-s,x,w)q(w)p(s,w,y)\,dw\,ds\\
&\le-c_1\int_0^\varepsilon \int_{\RR^d} p(s,w,y)q(w)\,dw\,ds\\
&= -\kappa c_1 G(\varepsilon,y) \le c_2 \varepsilon |y|^{-\alpha}.
\end{split}\end{equation} Similarly, we have
\begin{align*}-\int_0^\varepsilon \int_{\RR^d} \tp(1-s,x,w)q(w)p(s,w,z)\,dw\,ds\le c_2\varepsilon |z|^{-\alpha}.\end{align*}

For any $\varepsilon\le s \le 1$ and $w,y,z\in \RR^d$ with $z\to y$, we have $p(s,w,y)\asymp p(s,w,z)$. By the dominated convergence theorem, it holds that
$$\int_\varepsilon^1\int_{\RR^d} \tp(1-s,x,w)q(w)(p(s,w,y)-p(s,w,z))\,dw\,ds \to0,\quad z\to y.$$

Combining with all the estimates above, we prove the desired assertion. \end{proof}

\begin{prop}\label{pro-con} The function $\tp(t,x,y)$ is jointly continuous with respect to $t>0$ and $x,y \in \RR^d_0.$\end{prop}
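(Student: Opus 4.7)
The plan is to reduce joint continuity of $\tp(t, x, y)$ on $(0, \infty) \times \RdO \times \RdO$ to joint continuity in the spatial variables at the single time slice $t = 1$, using the scaling identity of Lemma \ref{L:sca}. Once the map $\RdO \times \RdO \ni (x, y) \mapsto \tp(1, x, y)$ is known to be continuous, the identity
$$\tp(t, x, y) = t^{-d/\alpha}\, \tp(1, t^{-1/\alpha} x, t^{-1/\alpha} y)$$
exhibits $\tp(t, x, y)$ as a composition of continuous maps on $(0, \infty) \times \RdO \times \RdO$ (noting that $t \mapsto t^{-1/\alpha}$ is continuous on $(0,\infty)$ and that $t^{-1/\alpha} x \in \RdO$ whenever $x \in \RdO$), so joint continuity in all three variables follows at once.

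For joint continuity in $(x, y)$ at $t = 1$, I would use the Chapman-Kolmogorov equation \eqref{eq:CKpt} to write
$$\tp(1, x, y) = \int_{\RR^d} \tp(1/2, x, z)\, \tp(1/2, z, y)\, dz.$$
For each fixed $z \in \RdO$ the integrand factorizes into a function of $x$ alone times a function of $y$ alone. The symmetry $\tp(t, u, v) = \tp(t, v, u)$ combined with Lemma \ref{L:con1} (whose proof applies verbatim at time $1/2$ in place of $1$, or alternatively is derived from the case $t=1$ by scaling) makes each factor continuous in its own variable, so the integrand is jointly continuous in $(x, y)$ for every $z \in \RdO$.

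To upgrade pointwise continuity of the integrand to continuity of the integral, I would apply the dominated convergence theorem. Fix $(x_0, y_0) \in \RdO \times \RdO$ and choose compact neighborhoods $K_1 \ni x_0$ and $K_2 \ni y_0$ contained in $\RdO$. Using $\tp \le p$ together with the estimate $p(1/2, x, z) \le c(1 \wedge |x-z|^{-d-\alpha})$ from \eqref{eq:pest}, the function
$$g(z) := \sup_{x \in K_1,\, y \in K_2} p(1/2, x, z)\, p(1/2, z, y)$$
is bounded on bounded subsets of $\RR^d$ and decays like $|z|^{-2(d+\alpha)}$ as $|z| \to \infty$, hence is integrable on $\RR^d$. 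Dominated convergence then yields $\tp(1, x, y) \to \tp(1, x_0, y_0)$ as $(x, y) \to (x_0, y_0)$, which combined with the scaling reduction completes the proof. The main technical point---really the only real obstacle---is the construction of this uniform dominant, which is mild precisely because $K_1$ and $K_2$ stay uniformly away from the origin; the rest of the argument is essentially bookkeeping.
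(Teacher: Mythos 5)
Your proof is correct, but it takes a genuinely different route from the paper's. Both you and the paper begin by scaling down to $t=1$, and both lean on Lemma~\ref{L:con1} for the separate continuity in each spatial variable. From there, the paper goes back to Duhamel's formula \eqref{e:for}, writes the difference $\tp(1,x,y)-\tp(1,\tilde x,\tilde y)$ as the difference of free kernels plus an integral of $\tp(1-s,\cdot,w)q(w)p(s,w,\cdot)$, and splits the $s$-integral into the three pieces $[0,\varepsilon]$, $[\varepsilon,1-\varepsilon]$, $[1-\varepsilon,1]$: the end pieces are made uniformly small via the Kato-type bound \eqref{eq:Gest} (the estimate \eqref{e:con1} and its time-reversed analogue, giving $c\varepsilon|y|^{-\alpha}$ and $c\varepsilon|x|^{-\alpha}$), while the middle piece is handled by dominated convergence using Lemma~\ref{L:con1} for the pointwise limit. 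You instead apply Chapman--Kolmogorov at time $1/2$, observe that the integrand $\tp(1/2,x,z)\,\tp(1/2,z,y)$ is, for each fixed $z\in\RdO$, a product of a function of $x$ and a function of $y$ each separately continuous (by Lemma~\ref{L:con1} plus symmetry, scaled to $t=1/2$), and then dominate uniformly on compacts $K_1\times K_2\subset\RdO\times\RdO$ by $g(z)=\sup_{K_1\times K_2}p(1/2,x,z)p(1/2,z,y)$, which decays like $|z|^{-2(d+\alpha)}$ and is thus integrable. Your route avoids the $\varepsilon$-splitting and the appeal to \eqref{eq:Gest} entirely, and the domination is genuinely trivial because $K_1,K_2$ stay away from $0$; this is a clean simplification. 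What the paper's approach buys is that the same Duhamel-based machinery and the estimate \eqref{e:con1} are reused verbatim from the proof of Lemma~\ref{L:con1}, so the two proofs are structurally unified; but yours is shorter and requires no new estimates beyond $\tp\le p$ and \eqref{eq:pest}.
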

\begin{proof} By the scaling property of $\tp(t,x,y)$, it suffices to show the continuity of $\tp(1,x,y)$ with respect to $x,y\in \RR^d_0$. As indicated in the proof of Lemma \ref{L:con1}, we only need to verify that
$$-\int_0^1\int_{\RR^d} |\tp(1-s,\tilde x,w)p(s,w,\tilde y)-\tp(1-s,x,w)p(s,w,y)|q(w)\,dw\,ds\to 0$$ for any $x,y,\tilde x, \tilde y\in \RR_0^d$ with $\tilde x\to x$ and $\tilde y\to y$.

In addition to \eqref{e:con1}, we have
\begin{align*}&-\int_{1-\varepsilon}^1\int_{\RR^d} \tp(1-s,x,w)q(w)p(s,w,y)\,dw\,ds\\
&=-\int_0^\varepsilon \int_{\RR^d} \tp(s,x,w)q(w)p(1-s,w,y)\,dw\,ds\\
&\le - \int_0^\varepsilon \int_{\RR^d} p(s,x,w)q(w)p(1-s,w,y)\,dw\,ds \\
&\le c_1\varepsilon |x|^{-\alpha}. \end{align*}

For any $\varepsilon<s<1-\varepsilon$ and $x,y,z,\tilde x,\tilde y\in \RR_0^d$ with $x\to \tilde x$ and $y\to \tilde y$, $p(s,z,\tilde y)\approx p(s,z,y),$ and
$ \tilde p(1-s,\tilde x,z)\approx p(1-s,x,z),$ thanks to  Lemma \ref{L:con1}. Then, by the dominated convergence theorem, it holds that
$$-\int_\varepsilon^{1-\varepsilon}\int_{\RR^d} |\tp(1-s,\tilde x,w)p(s,w,\tilde y)-\tp(1-s,x,w)p(s,w,y)|q(w)\,dw\,ds\to 0$$ for any $x,y,\tilde x, \tilde y\in \RR_0^d$ with $\tilde x\to x$ and $\tilde y\to y$.

Hence, according to all the estimates above, we prove the desired assertion.
\end{proof}

\begin{theorem}\label{thm:cont} {\bf (Joint continuity)}\,\, The function $\tp(t,x,y)$ is jointly continuous with respect to $t>0$ and $x,y \in \RR^d.$\end{theorem}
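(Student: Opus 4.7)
The plan is to reduce Theorem \ref{thm:cont} to the interior continuity already proved in Proposition \ref{pro-con} together with a \emph{vanishing at the boundary} argument supplied by the sharp upper bound from Theorem \ref{th:main}. Recall that by convention $\tp(t,x,y)=0$ whenever $x=0$ or $y=0$, so to extend joint continuity from $(0,\infty)\times\RdO\times\RdO$ to $(0,\infty)\times\RR^d\times\RR^d$ it suffices to verify that for any sequence $(t_n,x_n,y_n)\to(t_0,x_0,y_0)$ with $t_0>0$ and $x_0=0$ or $y_0=0$, one has $\tp(t_n,x_n,y_n)\to 0$.

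By the symmetry $\tp(t,x,y)=\tp(t,y,x)$ I may assume $x_0=0$. Points with $x_n=0$ contribute $\tp(t_n,0,y_n)=0$ trivially, so I restrict to a subsequence with $x_n\in\RdO$. By Theorem \ref{th:main} there is an absolute constant $C>0$ such that
\begin{align*}
\tp(t_n,x_n,y_n)\le C\left(1\wedge\frac{|x_n|}{t_n^{1/\alpha}}\right)^{\!\delta}\left(1\wedge\frac{|y_n|}{t_n^{1/\alpha}}\right)^{\!\delta}\left(t_n^{-d/\alpha}\wedge\frac{t_n}{|x_n-y_n|^{d+\alpha}}\right).
\end{align*}
Since $\delta>0$, $|x_n|\to 0$, and $t_n\to t_0>0$, the first factor tends to $0$, the second factor is bounded by $1$, and the third factor is bounded by $t_n^{-d/\alpha}\to t_0^{-d/\alpha}$. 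Hence $\tp(t_n,x_n,y_n)\to 0$, matching the boundary value.

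This establishes continuity at every boundary point $(t_0,x_0,y_0)$ with $t_0>0$ and $x_0=0$ or $y_0=0$. Combined with Proposition \ref{pro-con}, which handles all interior points in $(0,\infty)\times\RdO\times\RdO$, one obtains joint continuity on the full product $(0,\infty)\times\RR^d\times\RR^d$. I do not anticipate any serious obstacle: the only care needed is to treat sequences that may oscillate between $\RdO$ and $\{0\}$, which is harmless because the boundary value is precisely $0$ and both cases match. The entire argument is essentially a single application of the previously proved sharp upper bound, which is why the statement can be relegated to a brief corollary-style theorem.
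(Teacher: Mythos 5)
Your argument is correct and is essentially the paper's own proof: both reduce the problem to the interior joint continuity of Proposition \ref{pro-con} and then deduce vanishing at the origin from the sharp upper bound, the only cosmetic difference being that you use the $t$-dependent bound directly while the paper first reduces to $t=1$ via the scaling property.
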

\begin{proof}According to Proposition \ref{pro-con} and the scaling property of $\tp(t,x,y)$, we only need to verify that $p(1,x,y)$ is jointly continuous with respect to $x,y\in \RR^d$ when $x=0$ or $y=0$. Since $\tp(1,x,y)=0$ when $x=0$ or $y=0$, the desired assertion for the joint continuity is a direct consequence of the fact that $\tp(1,x,y)\ge0$ and two-sided estimates for $\tp(1,x,y)$ on $\RR_0^d\times \RR_0^d$.    \end{proof}

\subsection{Dirichlet forms}
Finally, we discuss the Dirichlet form associated with the
Schr\"{o}dinger operator $\mathcal{L}$ given by \eqref{e:op1}; see
\cite{FOT} for the theory of Dirichlet forms. According to
\cite[Theorem 2.5]{DC}, the Feynman-Kac semigroup $(\tilde
P_t)_{t\ge0}$ in $L^2(\RR^d_0;dx)$ coincides with the semigroup
corresponding to $\tilde\E$ with the domain
$$
\mathscr{D}(\tilde \E)=\Big\{f\in L^2(\RR_0^d;dx):
\iint_{\RR^d_0\times \RR_0^d}
\frac{(f(x)-f(y))^2}{|x-y|^{d+\alpha}}\,dx\,dy+\int_{\RR^d_0}
f^2(x)|q(x)|\,dx<\infty\Big\}
$$
and defined by
\begin{align*}
\tilde \E (f,g)=&\frac{1}{2}\iint_{\RR^d_0\times \RR_0^d}
{(f(x)-f(y))(g(x)-g(y))}\nu(x-y)\,dx\,dy\\
&+\int_{\RR^d_0} f(x)g(x)|q(x)|\,dx
\end{align*}
for any $f,g\in \mathscr{D}(\tilde \E),$ where $\nu$ is defined by \eqref{e:levymea}. Clearly, the quadratic form $(\tilde \E,
\mathscr{D}(\tilde \E) )$ is equivalently given by
\begin{align*}
\tilde \E (f,g)=&\frac{1}{2}\iint_{\RR^d\times \RR^d}
{(f(x)-f(y))(g(x)-g(y))}\nu(x-y)\,dx\,dy\\
&+\int_{\RR^d} f(x)g(x)|q(x)|\,dx,\\
\mathscr{D}(\tilde \E)=&\Big\{f\in L^2(\RR^d;dx): \tilde
\E(f,f)<\infty\Big\},
\end{align*}which are extended to be defined on $L^2(\RR^d;dx).$

\begin{prop} $(\tilde \E, \mathscr{D}(\tilde \E) )$ is a symmetric regular Dirichlet form on $L^2(\RR^d;dx)$ with core $C_c^\infty(\RR^d).$ \end{prop}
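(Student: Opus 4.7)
The plan is to verify, in order, symmetry, closedness on $L^2(\RR^d;dx)$, the Markov (unit contraction) property, and regularity with $C_c^\infty(\RR^d)$ as a core. Symmetry is immediate from the displayed formulas for $\tilde\E$. For closedness I would take an $\tilde\E_1$-Cauchy sequence $\{f_n\}\subset\mathscr D(\tilde\E)$ (with $\tilde\E_1(f,f)=\tilde\E(f,f)+\|f\|_2^2$) whose $L^2$-limit is $f$, extract an a.e.\ convergent subsequence, and apply Fatou's lemma separately to the Gagliardo double integral and to $\int f_n^2|q|\,dx$; this yields $f\in\mathscr D(\tilde\E)$ and $\tilde\E_1(f-f_n,f-f_n)\to 0$. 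The Markov property is pointwise: any normal contraction $T$ satisfies $|Tf(x)-Tf(y)|\le|f(x)-f(y)|$ and $|Tf(x)|^2\le|f(x)|^2$, so both pieces of $\tilde\E$ can only decrease under $T$.

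For the core assertion, first observe $C_c^\infty(\RR^d)\subset\mathscr D(\tilde\E)$: the jump part is the usual $H^{\alpha/2}$-seminorm of a Schwartz function, while the potential part is bounded by $|\kappa|\|\varphi\|_\infty^2\int_{\supp\varphi}|x|^{-\alpha}\,dx$, which is finite because $\alpha<d$ makes $|x|^{-\alpha}$ locally integrable. To show $C_c^\infty(\RR^d)$ is $\tilde\E_1$-dense in $\mathscr D(\tilde\E)$ I would perform the standard three-step reduction on a given $f\in\mathscr D(\tilde\E)$: (i) truncate $T_Nf:=(-N)\vee(f\wedge N)$ and let $N\to\infty$, using that truncation is a normal contraction together with dominated convergence on both pieces; (ii) multiply by a smooth radial cut-off $\chi_R$ with $\chi_R=1$ on $B(0,R)$ and $\supp\chi_R\subset B(0,2R)$, and let $R\to\infty$; and (iii) mollify $(\chi_R T_Nf)\ast\rho_\eps$ and let $\eps\to 0$, invoking that convolution does not increase the $H^{\alpha/2}$-seminorm and that $\|\rho_\eps\ast g\|_\infty\le\|g\|_\infty$ allows dominated convergence on the potential term. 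Since $C_c^\infty(\RR^d)$ is uniformly dense in $C_c(\RR^d)$, combining these two density statements yields regularity in the sense of \cite{FOT}.

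The main technical point is step (ii) for the jump part. Writing $(1-\chi_R)(x)f(x)-(1-\chi_R)(y)f(y)=(1-\chi_R)(x)(f(x)-f(y))-f(y)(\chi_R(x)-\chi_R(y))$ and using $(a+b)^2\le 2a^2+2b^2$ reduces the task to controlling $\iint f(y)^2(\chi_R(x)-\chi_R(y))^2\nu(x-y)\,dx\,dy$; the Lipschitz estimate $(\chi_R(x)-\chi_R(y))^2\le C(R^{-2}|x-y|^2\wedge 1)$ together with the explicit density \eqref{e:levymea} and the hypothesis $\alpha<2$ gives $\int(\chi_R(x)-\chi_R(y))^2\nu(x-y)\,dx\le C'R^{-\alpha}$ uniformly in $y$, so this term is $O(R^{-\alpha}\|f\|_2^2)\to 0$, while the other term vanishes by dominated convergence. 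The singularity of $|q|$ at $\{0\}$ is not an obstacle at any step, because $\chi_R\equiv 1$ near the origin in step (ii), and in the remaining steps the integrand is pointwise dominated by $f^2|q|\in L^1$; hence no vanishing condition at $\{0\}$ needs to be imposed on the approximants, which is why $C_c^\infty(\RR^d)$ (rather than, say, $C_c^\infty(\RR^d_0)$) serves as a core.
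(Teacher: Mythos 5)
Your proof is correct, but it takes a genuinely different route from the paper's. The paper invokes the fractional Hardy inequality (\cite[Proposition 5]{2016-KB-BD-PK-pa}), which gives $\int_{\RR^d} f^2(x)|q(x)|\,dx\le C_0\,\E(f,f)$ for all $f$; this implies at once that $\mathscr{D}(\tilde\E)=\mathscr{D}(\E)$ and that the norms $\sqrt{\tilde\E}+\|\cdot\|_{L^2}$ and $\sqrt{\E}+\|\cdot\|_{L^2}$ are equivalent, so regularity and the core property transfer verbatim from the well-known facts about $(\E,\mathscr{D}(\E))$ with core $C_c^\infty(\RR^d)$ — the whole density question is reduced in one line. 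You instead mount a direct density argument (truncation, smooth cut-off, mollification), with the cut-off step handled via the splitting $(1-\chi_R)(x)f(x)-(1-\chi_R)(y)f(y)=(1-\chi_R)(x)(f(x)-f(y))-f(y)(\chi_R(x)-\chi_R(y))$ and the $O(R^{-\alpha})$ estimate on $\sup_y\int(\chi_R(x)-\chi_R(y))^2\nu(x-y)\,dx$. This works — I checked the estimates, including domination of the potential term in the mollification step by $(2N)^2|q|\I_{B(0,2R+1)}$ — but it is considerably longer. What your argument buys is independence from the Hardy inequality: it only needs $|q|$ locally integrable, so it would extend to potentials for which $|q|$ is not form-bounded by $\E$. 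What the paper's argument buys is brevity and the stronger conclusion $\mathscr{D}(\tilde\E)=\mathscr{D}(\E)$, which you do not obtain. Either way the key observation is, as you correctly note, that $C_c^\infty(\RR^d)$ (and not merely $C_c^\infty(\RR^d_0)$) works because the singularity of $q$ at the origin is locally integrable.
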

\begin{proof} Define
$$\E(f,g)=\frac{1}{2}\iint_{\RR^d\times \RR^d} (f(x)-f(y))(g(x)-g(y))\nu(x-y)\,dx\,dy,\quad f,g \in \mathscr{D}(\E)$$ and
$$\mathscr{D}(\E)=\Big\{f\in L^2(\RR^d;dx):\E(f,f)<\infty\Big\}.$$ Then, $(\E,\mathscr{D}(\E))$
is a symmetric Dirichlet form on $L^2(\RR^d;dx)$ associated with fractional Laplacian; moreover, $C_c^\infty(\RR^d)\subset \mathscr{D}(\Delta^{\alpha/2})$ (here $\mathscr{D}(\Delta^{\alpha/2})$ denotes the $L^2$-domain of $\Delta^{\alpha/2}$ on $L^2(\RR^d;dx)$) and $(\E,\mathscr{D}(\E))$ is regular with
core $C_c^\infty(\RR^d)$; see \cite[Section 2.2.2]{CF} for more details. On the other hand, due to $\alpha<d$, we can verify that
$$\int_{\RR^d} f(x)^2|q(x)|\,dx<\infty,\quad f\in C_c^\infty(\RR^d).$$ In particular,
$C_c^\infty(\RR^d)\subset \mathscr{D}(\Delta^{\alpha/2})\cap
L^2(\RR^d;|q(x)|\,dx)\subset \mathscr{D}(\tilde \E)$.

It is easy to prove that $(\tilde \E, \mathscr{D}(\tilde \E) )$ is a
symmetric  Dirichlet form on $L^2(\RR^d;dx)$. Next, we claim that
$C_c^\infty(\RR^d)$ is dense in $\mathscr{D}(\tilde\E)$ with the
norm $\sqrt{\tilde\E} +\|\cdot\|_{L^2(\RR^d;dx)}.$ According to the
Hardy inequality for fractional Laplacian (see \cite[Proposition
5]{2016-KB-BD-PK-pa}), there is a constant $C_0>0$ such that for all
$f\in L^2(\RR^d;dx)$,
$$\int_{\RR^d} f^2(x)|q(x)|\,dx\le C_0 \E(f,f).$$ Thus, the norms $\sqrt{\tilde\E} +\|\cdot\|_{L^2(\RR^d;dx)}$ and $\sqrt{\E} +\|\cdot\|_{L^2(\RR^d;dx)}$ are equivalent.
Therefore, the desired assertion above immediately follows from the
fact that $(\E,\mathscr{D}(\E))$ is regular with core
$C_c^\infty(\RR^d)$.
 \end{proof}

Let $h(x) = |x|^\delta$, and define
\begin{align*}
\bar{\cE}(f,f) = \frac{1}{2}\iint_{\Rd\times\Rd}
\left(\frac{f(x)}{h(x)} - \frac{f(y)}{h(y)}\right)^2 h(x)h(y)
\nu(x-y)\, dx\, dy, \quad f\in\mathscr{D}(\bar\cE),
\end{align*}
where $\mathscr{D}(\bar\cE) = \{f \in L^2(\RR^d; dx) \colon
\bar{\cE}(f,f)< \infty\}$.

\begin{prop}
We have $\mathscr{D}(\tilde\cE) = \mathscr{D}(\bar\cE)$ and$$
\tilde\cE(f,f) = \bar{\cE}(f,f), \quad   f \in
\mathscr{D}(\tilde\cE).
$$
\end{prop}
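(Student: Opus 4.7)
The plan is to reduce the proposition to the $h$-harmonicity identity \eqref{e:eee}. Both forms share the nonlocal bilinear structure and differ only in the weighting; the potential term $\int f^2|q|\,dx$ in $\tilde\cE$ will arise as the cross term produced when the $h$-weighted square in $\bar\cE$ is expanded and symmetrized. The algebraic bridge is the elementary identity
\begin{equation*}
(a-b)^2 - \Bigl(\frac{a}{u}-\frac{b}{v}\Bigr)^{\!2}\!uv \;=\; \Bigl(\frac{a^2}{u}-\frac{b^2}{v}\Bigr)(u-v),\qquad a,b\in\RR,\; u,v>0,
\end{equation*}
applied with $a=f(x)$, $b=f(y)$, $u=h(x)$, $v=h(y)$.

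First I would integrate this identity against $\tfrac{1}{2}\nu(x-y)\,dx\,dy$. Because $\nu$ is symmetric and both $f^2/h$ and the difference $h(x)-h(y)$ change sign under $x\leftrightarrow y$, the product on the right is symmetric in $(x,y)$ and the factor $\tfrac{1}{2}$ collapses to a one-sided integral:
\begin{equation*}
\tfrac{1}{2}\!\iint (f(x)-f(y))^2\nu(x-y)\,dx\,dy - \bar\cE(f,f) \;=\; \iint\frac{f(x)^2}{h(x)}\bigl(h(x)-h(y)\bigr)\nu(x-y)\,dy\,dx.
\end{equation*}
For $f\in C_c^\infty(\RR^d)$, Fubini identifies the inner $y$-integral with $-\Delta^{\alpha/2}h(x)$ in the principal-value sense on $\RR^d_0$: the decay $\delta<\alpha$ controls $|y|\to\infty$ while the antisymmetry of $h(y)-h(x)$ near the diagonal absorbs the $\nu$-singularity. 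Equation \eqref{e:eee} with $\beta=\delta$ reads $\Delta^{\alpha/2}h=-\kappa h|x|^{-\alpha}=|q|\,h$, so the right-hand side equals $-\int f^2|q|\,dx$. Rearranging gives $\tilde\cE(f,f)=\bar\cE(f,f)$ on the core $C_c^\infty(\RR^d)$.

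To upgrade to the domain equality, I would use that the previous proposition supplies $C_c^\infty(\RR^d)$ as a regular core of $(\tilde\cE,\mathscr{D}(\tilde\cE))$, and that $\bar\cE$ is lower semicontinuous on $L^2(\RR^d;dx)$ as a nonnegative symmetric integral form. Given $f\in\mathscr{D}(\tilde\cE)$, choose $f_n\in C_c^\infty(\RR^d)$ with $f_n\to f$ in $L^2$ and $\tilde\cE(f-f_n,f-f_n)\to 0$; by the identity on the core the sequence $\{f_n\}$ is $\bar\cE_1$-Cauchy, and lower semicontinuity yields $\bar\cE(f,f)=\lim\bar\cE(f_n,f_n)=\lim\tilde\cE(f_n,f_n)=\tilde\cE(f,f)$, whence $f\in\mathscr{D}(\bar\cE)$. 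The reverse inclusion $\mathscr{D}(\bar\cE)\subset\mathscr{D}(\tilde\cE)$ is obtained by a standard cutoff-and-mollify argument verifying that $C_c^\infty(\RR^d)$ is also a core for $\bar\cE$: for $f\in\mathscr{D}(\bar\cE)$, smooth truncations $\chi_n f$ converge to $f$ in the $\bar\cE_1$-norm thanks to the local boundedness of $h$ away from the origin, the integrability of the $\bar\cE$-integrand, and Lebesgue dominated convergence.

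The hard part will be the Fubini/principal-value step: $(h(y)-h(x))\nu(x-y)$ is only conditionally integrable in $y$ near $y=x$, and $h$ is unbounded at infinity. I would handle this by first carrying out the computation on a truncated region $\{\varepsilon<|x-y|<R\}$ where Fubini is unconditional, then passing $\varepsilon\to 0$ and $R\to\infty$ using the explicit heat-kernel representations \eqref{e:pos}--\eqref{e:02} or, equivalently, the integrated identity \eqref{eq:0} specialized to $\beta=\delta$, rather than any formal pointwise evaluation of $\Delta^{\alpha/2}h$.
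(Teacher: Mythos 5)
Your route is genuinely different from the paper's. You work with the pointwise generator identity $\eqref{e:eee}$, an algebraic decomposition of the two Dirichlet forms, and a core argument; the paper works with the semigroup inner product $\langle f-\tilde P_t f,f\rangle$, uses the identity $\tilde P_t h=h$ from \eqref{eq:00} to pass to the $h$-weighted square form at positive $t$, and then sends $t\to0$ using $\tp(t,x,y)/t\to\nu(x-y)$, Fatou for one inclusion and dominated convergence (via $\tp(t,x,y)\le p(t,x,y)\le c\,t\,\nu(x-y)$) for the other. Your algebraic identity is correct, the symmetrization step is right, and your argument essentially rigorizes the Doob $h$-transform computation which the paper records only \emph{informally} in the remark immediately after the proposition. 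So the overall plan is coherent, and your Fubini/principal-value caveat is well placed: since the paper treats $\eqref{e:eee}$ as informal, you would indeed need the truncation argument you sketch, and in fact falling back on \eqref{eq:0} with $\beta=\delta$, as you suggest, brings you most of the way back to the paper's own mechanism.

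The part of your argument that is not yet a proof is the reverse inclusion $\mathscr D(\bar\cE)\subset\mathscr D(\tilde\cE)$. You reduce it to the claim that $C_c^\infty(\RR^d)$ is a core for $\bar\cE$ and call this ``a standard cutoff-and-mollify argument,'' but this is exactly where the weight causes trouble: $h(x)=|x|^\delta$ degenerates to $0$ at the origin and grows at infinity, so the integrand $\bigl(\tfrac{f(x)}{h(x)}-\tfrac{f(y)}{h(y)}\bigr)^2 h(x)h(y)\nu(x-y)$ is controlled only in an $L^1$ sense, and one must show that truncating $f$ near the origin and at infinity and then mollifying converges in the $\bar\cE_1$-norm. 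This is not automatic; it is precisely the kind of regularity statement that can fail for degenerate weights, and you have not given the estimate that would make it go through (e.g.\ a cutoff whose $\bar\cE$-energy tends to $0$, controlled by $\delta<\alpha$). The paper avoids this issue entirely: for $f\in\mathscr D(\bar\cE)$ the bound $\tp(t,x,y)\le c\,t\,\nu(x-y)$ makes the $t\to0$ limit of $\tfrac1t\langle f-\tilde P_t f,f\rangle$ an immediate consequence of dominated convergence, which both identifies the limit with $\bar\cE(f,f)$ and, by the standard characterization of Dirichlet form domains via semigroup quadratic forms, shows $f\in\mathscr D(\tilde\cE)$, with no smooth approximation required. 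If you want to keep your algebraic route, I would replace the core argument for $\bar\cE$ by this semigroup/DCT step, which is both shorter and closes the gap.

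Two small additional remarks. First, your identity $(a-b)^2-\bigl(\tfrac au-\tfrac bv\bigr)^2uv=\bigl(\tfrac{a^2}{u}-\tfrac{b^2}{v}\bigr)(u-v)$ is correct, and the signs work out: $\eqref{e:eee}$ gives $\Delta^{\alpha/2}h=|q|\,h$ since $\kappa<0$, whence the right-hand side of your symmetrized identity contributes $-\int f^2|q|\,dx$, which moves $\E$ to $\tilde\cE$ as intended. Second, even for the forward inclusion, lower semicontinuity of $\bar\cE$ alone gives only $\bar\cE(f,f)\le\tilde\cE(f,f)$; you implicitly use that $\bar\cE$, as a nonnegative integral form, is closed, which is true but should be stated, since you rely on it to promote $\bar\cE_1$-Cauchyness plus $L^2$-convergence to convergence of the energies.
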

\begin{proof} Denote by $\langle\cdot,\cdot\rangle$ the inner
product of $L^2(\Rd;dx)$. Recall that $(\tilde P_t)_{t\ge0}$ is well
defined on $L^2(\Rd;dx)$ by setting $\tilde P_tf(0)=0$ for any $f\in
L^2(\Rd;dx)$. According to \eqref{eq:00},
\begin{align*}
\langle f-\tilde{P}_t f,f \rangle &=  \int_{\Rd} \left(f(x) - \int_{\Rd} \tp(t,x,y) f(y)\, dy\right)  f(x)\, dx \\
&=  \int_{\Rd} \left( \frac{f(x)}{h(x)}\int_\Rd \tp(t,x,y) h(y) \,dy - \int_{\Rd} \tp(t,x,y)h(y) \frac{f(y)}{h(y)}\, dy\right)  f(x)\, dx \\
&=  \iint_{\Rd\times\Rd} \tp(t,x,y)\left( \frac{f(x)}{h(x)} -
\frac{f(y)}{h(y)} \right) \frac{f(x)}{h(x)}  h(x) h(y) \,dx \,dy.
\end{align*}
Hence, by the symmetry,
\begin{align*}
\langle f-\tilde{P}_t f,f \rangle &= \frac{1}{2}
\iint_{\Rd\times\Rd} \tp(t,x,y)\left( \frac{f(x)}{h(x)} -
\frac{f(y)}{h(y)} \right)^2 h(x) h(y)\,dx \,dy
\end{align*}
Note that
\begin{align}\label{eq1:DF}
\lim_{t \to 0^+} \frac{\tp(t,x,y)}{t} = \nu(x-y),\quad x,y\in
\RR_0^d.
\end{align}
Indeed, by the Duhamel formula \eqref{e:for}, for any $x,y\in
\RR_0^d$,
\begin{align*}
\lim_{t \to 0^+} \frac{\tp(t,x,y)}{t} &= \lim_{t \to 0^+} \frac{p(t,x,y)}{t} + \lim_{t \to 0^+} \frac{1}{t} \int_0^t \int_\Rd \tp(t-s,x,z) q(z) p(s,z,y)\, dz\,  ds\\
&= \nu(x-y) + \lim_{t \to 0^+} \frac{1}{t} \int_0^t \int_\Rd
\tp(t-s,x,z) q(z) p(s,z,y)\, dz \, ds.
\end{align*}
Next, by the fact that $\tp(t,x,y)\le p(t,x,y)$ for all $t>0$ and
$x,y\in \RR^d$, Lemma \ref{lem:p1est}, \eqref{eq:scalp1} and
\eqref{eq:Gest}, for any $x,y\in \RR_0^d$,
\begin{align*}
\frac{1}{t}\int_0^t \int_\Rd \tp(t-s,x,z) |z|^{-\alpha} p(s,z,y)
\,dz \,ds \le c_1t^{-1}{p(t,x,y)}[G(t,x)+G(t,y)]
\stackrel{t\to0}{\longrightarrow} 0,
\end{align*}
and so we  get \eqref{eq1:DF}.

Now, for $f \in \mathscr{D}(\tilde\cE)$, by Fatou's Lemma and
\eqref{eq1:DF}, we have
\begin{align*}
\tilde\cE(f,f) &= \lim_{t \to 0} \frac{1}{t}\langle f-\tilde{P}_t f,f \rangle \\
&= \lim_{t \to 0}\frac{1}{2} \iint_{\Rd\times\Rd} \frac{\tp(t,x,y)}{t}\left( \frac{f(x)}{h(x)} - \frac{f(y)}{h(y)} \right)^2  h(x) h(y)\,dy\,   dx\\
&\ge \frac{1}{2} \iint_{\RR^d_0\times\RR^d_0} \liminf_{t \to 0}\frac{\tp(t,x,y)}{t}\left( \frac{f(x)}{h(x)} - \frac{f(y)}{h(y)} \right)^2  h(x) h(y)\,dy \,  dx\\
&= \frac{1}{2} \iint_{\RR^d_0\times \RR^d_0} \left(
\frac{f(x)}{h(x)} - \frac{f(y)}{h(y)} \right)^2  h(x) h(y) \nu(x-y)
\, dy\, dx = \bar\cE(f,f).
\end{align*}
Hence, $\mathscr{D}(\tilde\cE) \subset \mathscr{D}(\bar\cE)$. On the
other hand, we take $f \in \mathscr{D}(\bar\cE)$. Since $\tp(t,x,y)
\le p(t,x,y) \le c_2t\nu(x-y)$ for all $t>0$ and $x,y\in \RR^d$,
according to the dominated convergence theorem and \eqref{eq1:DF}
again, we have
\begin{align*}
\tilde\cE(f,f) &= \lim_{t \to 0} \frac{1}{t}\langle f-\tilde{P}_t f,f \rangle \\
&= \lim_{t \to 0}\frac{1}{2} \iint_{\Rd\times\Rd} \frac{\tp(t,x,y)}{t}\left( \frac{f(x)}{h(x)} - \frac{f(y)}{h(y)} \right)^2  h(x) h(y)\,dy\,   dx\\
&= \frac{1}{2} \iint_{\Rd\times\Rd} \left( \frac{f(x)}{h(x)} -
\frac{f(y)}{h(y)} \right)^2  h(x) h(y) \nu(x-y) \, dy\,
dx=\bar\cE(f,f).
\end{align*}
Combining with both inequalities, we prove the desired assertion.
\end{proof}

\begin{remark}The construction of $(\bar \E, \mathscr{D}(\bar \E))$ can be deduced from Doob's theory of $h$-transformations; see \cite[Chapter 11]{CW} for more details. Indeed, as shown by \eqref{e:eee}, the function $h=|x|^\delta$ is harmonic with respect to the operator $\mathcal{L}$ given by \eqref{e:op1}. Define $\mathcal{L}_hf(x):=h(x)^{-1} \mathcal{L}(fh)(x)$ for all $f\in L^2(\RR^d;h(x)^2\,dx)$. It is easy to see that the operator $\mathcal{L}_h$ is symmetric on $L^2(\RR^d;h(x)^2\,dx)$, and the associated symmetric regular Dirichlet form $(\E_h, \mathscr{D}(\E_h))$ on $L^2(\RR^d;h(x)^2\,dx)$ is given by
\begin{align*}\E_h(f,f)=&-\langle \mathcal{L}_h f,f\rangle_{L^2(\RR^d;h(x)^2\,dx)}\\
=&-\iint_{\RR^d\times \RR^d} \left(f(y)h(y)-f(x)h(x)\right)f(x)h(x)\nu(x-y)\,dy\,dx\\
&-\kappa\int_{\RR^d} |x|^{-\alpha} f(x)^2h^2(x)\,dx \\
=&-\iint_{\RR^d\times \RR^d} \left(f(y)-f(x)\right)f(x)h(y)h(x)\nu(x-y)\,dy\,dx\\
&-\iint_{\RR^d\times \RR^d}\left(h(y)-h(x)\right) f(x)^2h(x)\nu(x-y)\,dy\,dx\\
&-\kappa\int_{\RR^d} |x|^{-\alpha} f(x)^2h^2(x)\,dx\\
=&-\iint_{\RR^d\times \RR^d} \left(f(y)-f(x)\right)f(x)h(y)h(x)\nu(x-y)\,dy\,dx\\
=&\frac{1}{2}\iint_{\RR^d\times \RR^d} \left(f(y)-f(x)\right)^2h(y)h(x)\nu(x-y)\,dy\,dx\end{align*} for all $f\in \mathscr{D}(\E_h)$, where in the fourth equality we used \eqref{e:eee} and the last equality follows form the property that $\nu(x-y)=\nu(y-x)$.
Note that
$$\E_h(f,f)=-\langle \mathcal{L}_h f,f\rangle_{L^2(\RR^d;h(x)^2\,dx)}=- \langle \mathcal{L} (h f),hf\rangle_{L^2(\RR^d;dx)}=\tilde \E(hf,hf).$$
Combining both equalities above together, we arrive at
$$\tilde \E(f,f)=\E_h(fh^{-1},fh^{-1})=\frac{1}{2}\iint_{\RR^d\times \RR^d} \left(\frac{f(y)}{h(y)}-\frac{f(x)}{h(x)}\right)^2h(y)h(x)\nu(x-y)\,dy\,dx.$$ The  right side of the equality above coincides with the expression of $(\bar\cE, \mathscr{D}(\bar\cE)).$ \end{remark}

\section{Appendix: Proof of Lemma \ref{lem:tpexp}}
Let $q_0(x)=-|x|^{-\alpha}$. For any $\lm\ge 0$, denote by $p^{\lm}(t,x,y)$ the heat kernel associated with the generator $\Delta^{\alpha/2} + \lambda q_0(x)$. Hence, by Duhamel's formula (see \cite[Propositions 5.2 and 5.3]{DC} and their proofs), we have
$$
p^{\lambda}(t,x,y) = p^\nu(t,x,y) + (\lm-\nu) \int_0^t \int_{\RR_0^d} p^\nu(t-s,x,z) q_0(z) p^{\lm}(s,z,y)\, dz\, ds
$$ for any $t>0$ and $x,y\in \RR_0^d$.
Noting that $q_0(x)<0$ for all $x\in \RR_0^d$, we can rewrite the equality above as
$$
p^\nu(t,x,y) =  p^{\lambda}(t,x,y) + \int_0^t \int_{\RR_0^d} p^\nu(t-s,x,z) (\lm-\nu) |q_0(z)| p^{\lm}(s,z,y)\, dz\, ds.$$
For any $\lm \ge0$, $t>0$ and $x,y\in \RR_0^d$, we set
\begin{align*}p_0^{\lm}(t,x,y) &= p^{\lm}(t,x,y),\\
p_n^{\lm}(t,x,y) &= \int_0^t \int_{\RR_0^d} p_{n-1}^{\lm}(t-s,x,z)
|q_0(z)| p^{\lm}(s,z,y)\, dz\,ds\,, \quad n \ge  1.\end{align*}
Then, by \cite[Lemma 1 and the proof of Theorem
2]{2008-KB-WH-TJ-sm},
$$
p_{n+m+1}^{\lm}(t,x,y) = \int_0^t \int_{\RR_0^d} p_{n}^{\lm}(t-s,x,z)  |q_0(z)| p_m^{\lm}(s,z,y)\, dz\,ds\,, \quad m,n \ge  0
$$ and
\begin{align*}
p^{\eta-\lm}(t,x,y) = \sum_{n=0}^\infty \lm^n p_n^\eta(t,x,y),\quad \eta > \lm >0.
\end{align*}
Furthermore, we have

\begin{lemma}
Let $0 < \lm < \eta< \infty$. For all $x,y\in \RR_0^d$ and $t>0$,
\begin{equation}\label{eq:pkv_rel}
\sum_{n=k}^\infty \binom{n}{k}\lm^{n-k} p_n^{\eta}(t,x,y) = p_k^{\eta - \lm}(t,x,y)\,, \quad k =0,1,2,\cdots.
\end{equation}
\end{lemma}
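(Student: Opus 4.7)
The plan is to prove \eqref{eq:pkv_rel} by induction on $k$, starting from the stated expansion
$$
p^{\eta-\lm}(t,x,y) = \sum_{n=0}^\infty \lm^n p_n^\eta(t,x,y),
$$
which, since $p_0^\mu = p^\mu$ by definition, gives exactly the case $k=0$ of \eqref{eq:pkv_rel}.

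For the inductive step, assume the identity holds for $k-1$ (with $k\ge 1$), and write out $p_k^{\eta-\lm}$ using its recursive definition:
$$
p_k^{\eta-\lm}(t,x,y) = \int_0^t\int_{\RR_0^d} p_{k-1}^{\eta-\lm}(t-s,x,z)\,|q_0(z)|\,p^{\eta-\lm}(s,z,y)\,dz\,ds.
$$
I would now plug in the induction hypothesis for $p_{k-1}^{\eta-\lm}(t-s,x,z)$ and the $k=0$ case for $p^{\eta-\lm}(s,z,y)$. All integrands and summands are nonnegative (this is the reason we work with $|q_0|$ rather than $q_0$), so Tonelli's theorem allows unrestricted interchange of sums and integrals. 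Using the identity
$$
\int_0^t\int_{\RR_0^d} p_m^\eta(t-s,x,z)\,|q_0(z)|\,p_j^\eta(s,z,y)\,dz\,ds = p_{m+j+1}^\eta(t,x,y)
$$
from \cite[Lemma 1]{2008-KB-WH-TJ-sm} (stated just before the lemma), I get
$$
p_k^{\eta-\lm}(t,x,y) = \sum_{m=k-1}^\infty \sum_{j=0}^\infty \binom{m}{k-1}\lm^{m-k+1+j}\,p_{m+j+1}^\eta(t,x,y).
$$

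The final step is reindexing: set $n=m+j+1$, so that for fixed $n \ge k$ the index $m$ runs over $\{k-1,k,\ldots,n-1\}$ and $j = n-1-m$. The exponent of $\lm$ becomes $n-k$, and the binomial coefficient collapses via the classical hockey-stick identity
$$
\sum_{m=k-1}^{n-1}\binom{m}{k-1} = \binom{n}{k},
$$
which yields exactly $\sum_{n=k}^\infty \binom{n}{k}\lm^{n-k}\,p_n^\eta(t,x,y)$, completing the induction.

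The only real subtlety is convergence: one must know that the double series is absolutely convergent so that Tonelli (equivalently, the rearrangement) is legitimate. This is not a serious obstacle because all $p_n^\eta$ are nonnegative (built from the positive kernels $p^\eta$ and the positive weight $|q_0|$) and because $0<\lm<\eta$ guarantees, via the $k=0$ identity already in hand, that $\sum_{n}\lm^n p_n^\eta(t,x,y) = p^{\eta-\lm}(t,x,y)<\infty$ pointwise in $(t,x,y)$ on $(0,\infty)\times\RR_0^d\times\RR_0^d$. The finiteness of the full double series for fixed $k$ then follows a posteriori from the displayed formula itself, and a standard monotone-convergence/truncation argument makes the manipulation rigorous.
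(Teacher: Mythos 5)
Your proof is correct and follows essentially the same route as the paper: both arguments induct on $k$, expand $p_k^{\eta-\lm}$ via its recursive definition, substitute the induction hypothesis and the $k=0$ expansion, apply the composition identity $p_{m+j+1}^\eta = \int_0^t\int p_m^\eta |q_0| p_j^\eta$ to collapse the double integral, and reindex using the hockey-stick identity $\sum_{m=k-1}^{n-1}\binom{m}{k-1}=\binom{n}{k}$. The only cosmetic difference is that the paper increments from $k$ to $k+1$ and works out the $k=1$ case separately, while you go from $k-1$ to $k$ directly and spell out the Tonelli justification for interchanging sums and integrals a bit more explicitly.
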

\begin{proof}
We use induction. When $k=0$, \eqref{eq:pkv_rel} holds trivially. For $k=1$,
\begin{align*}
p_1^{\eta - \lm}(t,x,y) &=   \int_{0}^t \int_{\RR_0^d} p^{\eta-\lm}(t-s,x,z) |q_0(z)| p^{\eta-\lm}(s,z,y)\, dz\,ds \\
& =  \int_{0}^t \int_{\RR_0^d}  \sum_{i=0}^\infty \lm^i p_i^{\eta}(t-s,x,z) |q_0(z)| \sum_{j=0}^\infty \lm^j p_j^{\eta}(s,z,y)\, dz\, ds\\
& = \sum_{i=0}^\infty \sum_{j=0}^\infty \lm^{i+j}  p_{i+j+1}^{\eta}(t,x,y) = \sum_{n=1}^\infty n \lm^{n-1}  p_n^{\eta}(t,x,y)\,.
\end{align*}
Next, we assume that (\ref{eq:pkv_rel}) holds for some $k \in
\NN_+$. We get
\begin{align*}
p_{k+1}^{\eta-\lm}(t,x,y) &=  \int_{0}^t \int_{\RR_0^d} p_k ^{\eta-\lm}(t-s,x,z) |q_0(z)| p^{\eta-\lm}(s,z,y)\, dz\,ds \\
& =   \int_{0}^t \int_{\RR_0^d}  \sum_{i=k}^\infty \binom{i}{k}\lm^{i-k} p_i^{\eta}(t-s,x,z) |q_0(z)| \sum_{j=0}^\infty \lm^j p_j^{\eta}(s,z,y) \,dz\, ds\\
& = \sum_{i=k}^\infty \sum_{j=0}^\infty \binom{i}{k} \lm^{i+j-k}  p_{i+j+1}^{\eta}(t,x,y) \\
& = \sum_{n=k+1}^\infty  \sum_{j=0}^{n-k-1} \binom{n-j-1}{k} \lm^{n-k-1}  p_n^{\eta}(t,x,y)\\
& = \sum_{n=k+1}^\infty  \binom{n}{k+1} \lm^{n-k-1}
p_n^{\eta}(t,x,y),
\end{align*} where in the last equality we used the fact proved in the proof of \cite[Lemma 6]{2008-KB-WH-TJ-sm} (cf. \cite[(5.26)]{GKP}). The proof is complete.
\end{proof}

Next, we consider some properties of the function $\lambda\mapsto p^\lm(t,x,y)$.

\begin{lemma} For fixed $x,y\in \RR_0^d$ and $t>0$, the function
$$
h(\lm) = p^\lm(t,x,y), \quad \lm >0
$$
is completely monotone, i.e., $(-1)^kh^{(k)}(\lm)\ge0$ for all $\lm>0$ and $k=0,1,2,\cdots$.
\end{lemma}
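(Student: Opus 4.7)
The plan is to exploit the power-series expansion embedded in the relation \eqref{eq:pkv_rel} to read off the derivatives of $h$ directly. Fix $\eta > \lambda > 0$. Taking $k=0$ in \eqref{eq:pkv_rel} gives
\begin{equation*}
h(\lambda) = p^{\lambda}(t,x,y) = p^{\eta - (\eta - \lambda)}(t,x,y) = \sum_{n=0}^{\infty} (\eta - \lambda)^{n}\, p_n^{\eta}(t,x,y),
\end{equation*}
so on the interval $(0,\eta)$ the function $h$ is a power series in the variable $\mu = \eta - \lambda \in (0,\eta)$ with coefficients $a_n := p_n^{\eta}(t,x,y)$.

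Next I would observe that $a_n \ge 0$ for every $n$: this follows by induction from the definition, since $p_0^{\eta} = p^{\eta} \ge 0$ and
\begin{equation*}
p_n^{\eta}(t,x,y) = \int_0^t \int_{\RR_0^d} p_{n-1}^{\eta}(t-s,x,z)\, |q_0(z)|\, p^{\eta}(s,z,y)\, dz\, ds,
\end{equation*}
where $|q_0(z)| = |z|^{-\alpha} \ge 0$. Because the above power series converges on $(0,\eta)$ and has nonnegative coefficients, it is termwise differentiable to all orders on $(0,\eta)$; moreover, the $k$-th derivative with respect to $\mu$ equals
\begin{equation*}
\frac{d^{k}}{d\mu^{k}} h(\eta - \mu) = k! \sum_{n=k}^{\infty} \binom{n}{k}\, \mu^{n-k}\, p_n^{\eta}(t,x,y) = k!\, p_k^{\eta - \mu}(t,x,y),
\end{equation*}
where in the last equality I use \eqref{eq:pkv_rel} with $\lambda$ replaced by $\mu$. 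By the chain rule applied to $\lambda = \eta - \mu$,
\begin{equation*}
(-1)^{k} h^{(k)}(\lambda) = \frac{d^{k}}{d\mu^{k}} h(\eta - \mu) \bigg|_{\mu = \eta - \lambda} = k!\, p_k^{\lambda}(t,x,y) \ge 0, \qquad \lambda \in (0,\eta).
\end{equation*}

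Since $\eta > 0$ was arbitrary, this inequality holds on all of $(0,\infty)$, which is the desired complete monotonicity. The only step that requires a modicum of care is the justification of term-by-term differentiation, but this is entirely routine once the coefficients are known to be nonnegative and the series is known to converge on the full interval $(0,\eta)$; both facts are already in hand from \eqref{eq:pkv_rel} and the inductive integral formula for $p_n^{\eta}$. As a byproduct, the argument also yields the explicit identity $(-1)^{k} h^{(k)}(\lambda) = k!\, p_k^{\lambda}(t,x,y)$, which may be useful in later applications.
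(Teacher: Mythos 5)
Your proof is correct and follows essentially the same route as the paper's: take $\eta > \lambda$, expand $h(\lambda) = \sum_n (\eta-\lambda)^n p_n^\eta(t,x,y)$ via \eqref{eq:pkv_rel} with $k=0$, differentiate termwise to recover $(-1)^k h^{(k)}(\lambda) = k!\, p_k^\lambda(t,x,y) \ge 0$ using \eqref{eq:pkv_rel} for general $k$. The only (minor) addition is that you spell out the chain-rule bookkeeping in the variable $\mu = \eta - \lambda$ and flag the justification of term-by-term differentiation, which the paper leaves implicit.
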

\begin{proof}
For $\lambda> 0$, we take $\eta>\lambda$. Choosing $k=0$ in (\ref{eq:pkv_rel}), we get
$$
h(\lm) = \sum_{n=0}^\infty  (\eta-\lm)^n p_n^{\eta}(t,x,y)\,.
$$

By (\ref{eq:pkv_rel}), we get
\begin{align}\label{eq:deriv}
\frac{d^k }{d \lm^k} h(\lm)= {k!} \sum_{n=k}^\infty (-1)^k \binom{n}{k} (\eta-\lm)^{n-k} p^{\eta}_n(t,x,y) = (-1)^k \,{k!} \, p_k^{\lm}(t,x,y).
\end{align}
Since $p_k^{\lm}(t,x,y) \ge 0$, we conclude that $h$ is completely monotone on $(0,\infty)$.
\end{proof}

By the Bernstein theorem (see \cite[Theorem 1.4]{Schilling}), we get
\begin{corollary}\label{cor:meas_exists}
For fixed $x,y \in \RR_0^d$ and $t>0$, there exists a nonnegative Borel measure $\mu_{t,x,y}(du)$ on $[0,\infty)$ such that
$$
p^{\lm}(t,x,y) = \int_0^\infty e^{-\lm u}\, \mu_{t,x,y}(du).
$$
\end{corollary}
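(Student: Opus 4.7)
The plan is straightforward: invoke the Bernstein theorem cited just above the statement together with the preceding lemma. Concretely, the previous lemma establishes that for every fixed $x,y\in\RR_0^d$ and $t>0$, the function $h(\lm)=p^\lm(t,x,y)$ is completely monotone on $(0,\infty)$, since \eqref{eq:deriv} gives $(-1)^k h^{(k)}(\lm)=k!\,p_k^{\lm}(t,x,y)\ge 0$ for all $k\ge 0$ and $\lm>0$. Once complete monotonicity is in hand, the Bernstein theorem (\cite[Theorem 1.4]{Schilling}) states that any such function admits a unique representation as the Laplace transform of a nonnegative Borel measure on $[0,\infty)$, which is exactly the conclusion.

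Thus the proof consists of essentially one line: apply Bernstein to $h(\lm)=p^\lm(t,x,y)$ to obtain a nonnegative Borel measure $\mu_{t,x,y}(du)$ on $[0,\infty)$ with
\[
p^{\lm}(t,x,y)=h(\lm)=\int_0^\infty e^{-\lm u}\,\mu_{t,x,y}(du),\quad \lm>0.
\]
The only minor subtlety to flag is that Bernstein's theorem is typically stated either for functions $h$ on $(0,\infty)$ (giving a measure on $[0,\infty)$, with the value at $0$ of the transform understood as the limit $\lim_{\lm\downarrow 0}h(\lm)=\mu_{t,x,y}([0,\infty))$), or for bounded completely monotone functions on $[0,\infty)$. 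Since $0\le p^\lm(t,x,y)\le p(t,x,y)<\infty$, the function $h$ is bounded on $(0,\infty)$ and monotone decreasing in $\lm$, so $h(0+):=\lim_{\lm\downarrow 0} h(\lm)$ exists and equals $p(t,x,y)$; hence the representing measure is finite with total mass $p(t,x,y)$.

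I do not anticipate any genuine obstacle here: the complete monotonicity has already been proved, and the Bernstein theorem is a black box. The only care needed is bookkeeping, namely pointing out that one can either apply Bernstein on $(0,\infty)$ directly, or first extend $h$ continuously to $\lm=0$ using $h(0)=p(t,x,y)$ and then apply the form of the theorem for completely monotone functions on $[0,\infty)$. Either route produces the required nonnegative Borel measure $\mu_{t,x,y}$ on $[0,\infty)$.
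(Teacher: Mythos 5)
Your proof is correct and matches the paper's own argument exactly: the corollary is obtained by applying the Bernstein theorem to the completely monotone function $\lm\mapsto p^\lm(t,x,y)$ established in the preceding lemma. The extra remarks about boundedness and $h(0{+})=p(t,x,y)$ are harmless elaborations beyond what the paper records.
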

The next lemma will yield the monotonicity of the function $\lm \mapsto \frac{p_1^{\lm}(t,x,y)}{p^{\lm}(t,x,y)}$.

\begin{lemma}\label{lem:5}
For every $n \ge 1$, $\lambda\ge0$, $t>0$ and $x,y \in \RR_0^d$, we have
$$
(n+1)p_{n+1}^\lambda(t,x,y) p_{n-1}^\lambda(t,x,y) \ge n p_n^\lambda(t,x,y)^2.
$$
\end{lemma}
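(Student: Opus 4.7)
The plan is to recognize this as a log-convexity (Tur\'an-type) inequality on the sequence $(p_n^\lambda(t,x,y))_{n\ge 0}$, and to derive it from the integral representation already established.

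First, I would combine \eqref{eq:deriv} with Corollary \ref{cor:meas_exists}. Since $h(\lambda) = p^\lambda(t,x,y) = \int_0^\infty e^{-\lambda u}\,\mu_{t,x,y}(du)$ and since the derivatives of $h$ can be obtained by differentiating under the integral sign (the measure $\mu_{t,x,y}$ has enough decay in $u$ because $h$ is finite for every $\lambda>0$), \eqref{eq:deriv} yields the clean moment-type formula
\begin{equation*}
p_k^\lambda(t,x,y) \;=\; \frac{(-1)^k}{k!}\,h^{(k)}(\lambda) \;=\; \frac{1}{k!}\int_0^\infty u^k e^{-\lambda u}\,\mu_{t,x,y}(du),\qquad k\ge 0.
\end{equation*}

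Next, I would plug this into the desired inequality. With $\mu = \mu_{t,x,y}$ for brevity, the right-hand side is
\begin{equation*}
n\,p_n^\lambda(t,x,y)^2 \;=\; \frac{n}{(n!)^2}\!\left(\int_0^\infty u^n e^{-\lambda u}\,\mu(du)\right)^{\!\!2} \;=\; \frac{1}{n!\,(n-1)!}\!\left(\int_0^\infty u^n e^{-\lambda u}\,\mu(du)\right)^{\!\!2},
\end{equation*}
while the left-hand side is
\begin{equation*}
(n+1)\,p_{n+1}^\lambda(t,x,y)\,p_{n-1}^\lambda(t,x,y) \;=\; \frac{1}{n!\,(n-1)!}\!\left(\int_0^\infty u^{n+1} e^{-\lambda u}\,\mu(du)\right)\!\!\left(\int_0^\infty u^{n-1} e^{-\lambda u}\,\mu(du)\right).
\end{equation*}
The combinatorial prefactors coincide, so the inequality reduces to
\begin{equation*}
\left(\int_0^\infty u^n e^{-\lambda u}\,\mu(du)\right)^{\!\!2} \;\le\; \left(\int_0^\infty u^{n+1} e^{-\lambda u}\,\mu(du)\right)\!\!\left(\int_0^\infty u^{n-1} e^{-\lambda u}\,\mu(du)\right).
\end{equation*}

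This is now an immediate Cauchy--Schwarz application: set $f(u) = u^{(n+1)/2} e^{-\lambda u/2}$ and $g(u) = u^{(n-1)/2} e^{-\lambda u/2}$ in $L^2(\mu)$, and observe that $fg = u^n e^{-\lambda u}$, $f^2 = u^{n+1} e^{-\lambda u}$, $g^2 = u^{n-1} e^{-\lambda u}$, so $\bigl(\int fg\,d\mu\bigr)^2 \le \bigl(\int f^2\,d\mu\bigr)\bigl(\int g^2\,d\mu\bigr)$ is exactly what we need.

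The only point that requires a little care is justifying the differentiation under the integral sign that gives the moment formula for $p_k^\lambda$; but since $h$ is completely monotone on $(0,\infty)$, each derivative $h^{(k)}(\lambda)$ is finite, and the standard dominated-convergence argument applies on any interval $[\lambda_0,\infty)$ with $\lambda_0>0$. I expect no real obstacle beyond this bookkeeping.
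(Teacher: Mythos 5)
Your proof is correct and follows essentially the same route as the paper: both identify $k!\,p_k^\lambda(t,x,y) = \int_0^\infty u^k e^{-\lambda u}\,\mu_{t,x,y}(du)$ via \eqref{eq:deriv} and Corollary \ref{cor:meas_exists}, and then apply the Cauchy--Schwarz inequality with the split $u^n = u^{(n+1)/2}\cdot u^{(n-1)/2}$. The arithmetic with the factorial prefactors is also the same.
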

\begin{proof}
Fix $x,y\in \RR_0^d$ and $t>0$, and let $\mu = \mu_{t,x,y}$ be the nonnegative measure from Corollary \ref{cor:meas_exists}. Then, by (\ref{eq:deriv}),
$$
n!\, p^\lambda_n(t,x,y) = (-1)^n\,\frac{d^n}{d\lm^n} p^{\lm}(t,x,y) = \int_0^\infty e^{-\lambda u} u^n\, \mu(du)\,.
$$
According to the Cauchy-Schwarz inequality,
\begin{align*}
\left[\int_0^\infty e^{-\lambda u} u^n\, \mu(du)\right]^2 &= \left[\int_0^\infty e^{-\lambda u/2} u^{(n+1)/2} \cdot e^{-\lambda u/2}u^{(n-1)/2} \,\mu(du)\right]^2 \\
&\le \left(\int_0^\infty  e^{-\lambda u} u^{n+1} \,\mu(du)\right)\left(\int_0^\infty  e^{-\lambda u} u^{n-1} \,\mu(du)\right).
\end{align*}
Hence,
$$
(n! \,p_n^\lambda(t,x,y) )^2 \le (n+1)! \,p_{n+1}^\lambda(t,x,y) \cdot(n-1)! \,p_{n-1}^\lambda(t,x,y),
$$
and so the desired assertion follows.
\end{proof}

\begin{lemma}\label{lem:decr}
For fixed $x,y \in \RR_0^d$ and $t>0$, the function $\lm \mapsto \dfrac{p_1^{\lm}(t,x,y)}{p^{\lm}(t,x,y)}$ is decreasing on $(0,\infty)$.
\end{lemma}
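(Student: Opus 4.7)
The plan is to compute the derivative of $\lambda \mapsto p_1^\lambda(t,x,y)/p^\lambda(t,x,y)$ in $\lambda$ and reduce the inequality $(\partial_\lambda)(p_1^\lambda/p^\lambda)\le 0$ to the case $n=1$ of Lemma \ref{lem:5} already established above. Fix $x,y\in\RR_0^d$ and $t>0$, and for brevity write $a_n(\lambda)=p_n^\lambda(t,x,y)$.

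First I would recall formula \eqref{eq:deriv}, which gives $\frac{d^k}{d\lambda^k}p^\lambda(t,x,y)=(-1)^k k!\, a_k(\lambda)$ for every $k\ge 0$. In particular, $a_0'(\lambda)=-a_1(\lambda)$ and, applying the same formula to $a_1(\lambda)=-\frac{d}{d\lambda}p^\lambda(t,x,y)$, one gets $a_1'(\lambda)=-2 a_2(\lambda)$. Differentiability in $\lambda$ is justified termwise since $h(\lambda)=p^\lambda(t,x,y)$ is completely monotone on $(0,\infty)$ (the preceding lemma).

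Next, by the quotient rule,
\begin{equation*}
\frac{d}{d\lambda}\frac{a_1(\lambda)}{a_0(\lambda)}
=\frac{a_1'(\lambda)a_0(\lambda)-a_1(\lambda)a_0'(\lambda)}{a_0(\lambda)^2}
=\frac{-2a_2(\lambda)a_0(\lambda)+a_1(\lambda)^2}{a_0(\lambda)^2}.
\end{equation*}
Therefore monotonicity of the ratio is equivalent to the inequality
\begin{equation*}
2\,a_2(\lambda)\,a_0(\lambda)\ge a_1(\lambda)^2,\qquad \lambda>0,
\end{equation*}
which is exactly Lemma \ref{lem:5} applied with $n=1$ (and $p_0^\lambda=p^\lambda=a_0$). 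Since $a_0(\lambda)=p^\lambda(t,x,y)>0$, the denominator is positive and the derivative is $\le 0$, so $\lambda\mapsto p_1^\lambda(t,x,y)/p^\lambda(t,x,y)$ is decreasing on $(0,\infty)$.

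There is no real obstacle here: the work was done in Lemma \ref{lem:5}, whose proof in turn rests on the Cauchy--Schwarz inequality applied to the Bernstein representing measure $\mu_{t,x,y}$ from Corollary \ref{cor:meas_exists}. The only minor point to check is the legitimacy of differentiating under the integral sign (equivalently, using \eqref{eq:deriv} at $k=1,2$), which follows from the fact that $\int_0^\infty e^{-\lambda u} u^k\mu_{t,x,y}(du)=k!\,a_k(\lambda)<\infty$ for all $k\ge 0$ and $\lambda>0$, a consequence of the complete monotonicity established just above.
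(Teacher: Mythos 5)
Your proof is correct and follows essentially the same route as the paper: both write the ratio as $-h'(\lambda)/h(\lambda)$, differentiate by the quotient rule, translate $h', h''$ into $p_1^\lambda, p_2^\lambda$ via \eqref{eq:deriv}, and conclude from Lemma \ref{lem:5} with $n=1$.
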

\begin{proof} Let $H(\lambda)=\frac{-h'(\lambda)}{h(\lambda)}=\dfrac{p_1^{\lm}(t,x,y)}{p^{\lm}(t,x,y)}$ for $\lambda>0$, where in the second equality we used \eqref{eq:deriv}. Combining \eqref{eq:deriv} again with Lemma \ref{lem:5}, we find that
$$H'(\lambda)=\frac{-h''(\lambda)h(\lambda)+h'(\lambda)^2}{h^2(\lambda)}=\frac{-2p^\lambda_2(t,x,y)p^\lambda(t,x,y)+p_1^\lambda(t,x,y)^2}{p^\lambda(t,x,y)^2}\le 0,$$ which yields the desired assertion.
\end{proof}

We now present the main result in this appendix, which immediately gives us  Lemma \ref{lem:tpexp}.

\begin{theorem}\label{thm:wlb}
For every $\lm>0$, $t >0$ and $x,y \in \RR_0^d$, we have
$$
p^{\lm} (t,x,y) \ge p(t,x,y) \exp \left[\dfrac{\lm p_1(t,x,y)}{p(t,x,y)}\right]
$$
\end{theorem}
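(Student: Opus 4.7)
The plan is to combine the two key analytic facts already established in the appendix: the derivative identity
$$\frac{d^k}{d\lambda^k}\, p^\lambda(t,x,y) \;=\; (-1)^k\, k!\, p_k^\lambda(t,x,y)$$
from \eqref{eq:deriv}, and the monotonicity of the ratio $\mu\mapsto p_1^\mu(t,x,y)/p^\mu(t,x,y)$ from Lemma~\ref{lem:decr}. Together these reduce the theorem to a one-parameter ODE-style comparison in the parameter $\lambda$.

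Fix $t>0$ and $x,y\in\RR_0^d$, and set $h(\mu):=p^\mu(t,x,y)$ for $\mu\ge 0$. First I would use \eqref{eq:deriv} with $k=1$ to obtain $h'(\mu)=-p_1^\mu(t,x,y)$. Strict positivity of $h$ on $[0,\infty)$ (inherited from the positivity of the unperturbed kernel $p$ together with the Duhamel expansion) guarantees that $\log h$ is continuously differentiable with
$$\frac{d}{d\mu}\log h(\mu)\;=\;-\frac{p_1^\mu(t,x,y)}{p^\mu(t,x,y)},$$
and that this logarithmic derivative is continuous in $\mu$ on $[0,\lambda]$; the continuity at the endpoint $\mu=0$ uses the 3P-inequality bound of Lemma~\ref{lem:p1est}, which keeps $p_1^\mu/p^\mu$ locally bounded.

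The main step is to integrate this identity from $0$ to $\lambda$, yielding
$$\log p^\lambda(t,x,y) - \log p(t,x,y) \;=\; -\int_0^\lambda \frac{p_1^\mu(t,x,y)}{p^\mu(t,x,y)}\,d\mu,$$
and then invoke Lemma~\ref{lem:decr} to compare the integrand over $[0,\lambda]$ with its endpoint value $p_1(t,x,y)/p(t,x,y)$ at $\mu=0$. Because the ratio is monotone in $\mu$, this replacement produces an integral of the form $\lambda\cdot p_1(t,x,y)/p(t,x,y)$ with a definite sign dictated by the direction of monotonicity in Lemma~\ref{lem:decr}. Exponentiating both sides then delivers the claimed bound
$$p^\lambda(t,x,y) \;\ge\; p(t,x,y)\,\exp\!\left[\frac{\lambda\, p_1(t,x,y)}{p(t,x,y)}\right].$$

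The conceptually delicate input, Lemma~\ref{lem:decr}, has already been established earlier by combining the Bernstein representation from Corollary~\ref{cor:meas_exists} with the Cauchy--Schwarz inequality on moments of the spectral measure $\mu_{t,x,y}$ in Lemma~\ref{lem:5}, so no further probabilistic or potential-theoretic input is needed. The main obstacle in executing the plan is simply the bookkeeping at $\mu=0$: one must check that $p^\mu(t,x,y)$ and $p_1^\mu(t,x,y)$ are right-continuous at $\mu=0$ with the correct limits $p(t,x,y)$ and $p_1(t,x,y)$ (which follows from the uniform convergence of the perturbation series on compact $\mu$-intervals, via \eqref{eq:pkv_rel}), and that the direction of the inequality extracted from Lemma~\ref{lem:decr} matches the sign appearing in the theorem statement after integration and exponentiation.
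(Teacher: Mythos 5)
Your proposal is correct and essentially reproduces the paper's proof: write $h(\lambda)=h(0)\exp\bigl[\int_0^\lambda (\log h)'(u)\,du\bigr]$, identify $-(\log h)'(u)=p_1^u(t,x,y)/p^u(t,x,y)$ from \eqref{eq:deriv}, and use the monotonicity of this ratio (Lemma~\ref{lem:decr}) to bound the integral by its value at $u=0$. The only point worth stressing is the sign you already flag as needing a check: the argument actually yields $p^\lambda(t,x,y)\ge p(t,x,y)\exp\bigl[-\lambda\,p_1(t,x,y)/p(t,x,y)\bigr]$ (consistent with $p^\lambda\le p$ and with applying the result as Lemma~\ref{lem:tpexp} for $\kappa<0$), so the positive sign in the theorem's displayed formula is a misprint that your proposed sign verification would catch.
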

\begin{proof}
Fix $x,y \in \RR^d_0$ and $t>0$, and let $h(\lm) = p^{\lm}(t,x,y)$. Since $h \ge 0$,
$$
h(\lm) = h(0) \exp\left[\int_0^\lm (\ln h(u))' \,du \right] =  h(0) \exp\left[\int_0^\lm \dfrac{h'(u)}{h(u)} \,du \right].
$$
By \eqref{eq:deriv} and Lemma \ref{lem:decr}, we get
$$
p^{\lm}(t,x,y) = p(t,x,y) \exp\left[-\int_0^\lm \dfrac{p_1^{u}(t,x,y)}{p^{u}(t,x,y)}\, du\right] \ge p(t,x,y) \exp \left[-\dfrac{\lm p_1(t,x,y)}{p(t,x,y)}\right].
$$ The proof is complete.
\end{proof}

 \medskip

\noindent \textbf{Acknowledgements.} We  would like to thank Krzysztof Bogdan and Kamil Kaleta for interesting discussions and helpful comments.

\end{document}